\def\R{\mathbb{R}}
\def\NN{\mathbb{N}}
\def\P{{\cal P}} 
\def\E{{\mathbb{E}}}
\def\kvec{\mathbf{k}}
\def\s{\mathbf{s}}
\def\la{\langle}
\def\ra{\rangle}
\def\to{\rightarrow}
\def\eps{\varepsilon}
\def \o{\omega}
\newtheorem{theorem}{Theorem}[section]
\newtheorem{lemma}[theorem]{Lemma}
\newtheorem{proposition}[theorem]{Proposition}
\newtheorem{definition}[theorem]{Definition}
\newtheorem{remark}[theorem]{Remark}
\newtheorem{rk&ex}[theorem]{Remarks \& Examples}
\newtheorem{corollary}[theorem]{Corollary}
\newcommand{\beqar}{\begin{eqnarray*}}
\newcommand{\eeqar}{\end{eqnarray*}}
\newcommand{\lp}{\left(}
\newcommand{\rp}{\right)}
\newcommand{\be}{\begin{equation}}
\newcommand{\ee}{\end{equation}}
\newcommand{\om}{\omega_1+\omega_2-\omega_3}
\newcommand{\K}{K(\omega_1, \omega_2, \omega_3)}
\newcommand{\phiall}{\varphi(\omega_1)\varphi(\omega_2)\varphi(\omega_3)}
\newcommand{\varo}{\varphi(\omega_1)}
\newcommand{\vart}{\varphi(\omega_2)}
\newcommand{\varth}{\varphi(\omega_3)}
\newcommand{\varom}{\varphi(\omega)}
\newcommand{\muo}{\mu(d\omega_1)}
\newcommand{\mut}{\mu(d\omega_2)}
\newcommand{\muth}{\mu(d\omega_3)}
\newcommand{\muom}{\mu(d\omega)}
\newcommand{\mub}{\mu^B_t}
\newcommand{\mubp}{\mu^{B'}_t}
\newcommand{\doo}{(d\omega_1)}
\newcommand{\dotw}{(d\omega_2)}
\newcommand{\doth}{(d\omega_3)}
\newcommand{\lambp}{\lambda^{B'}_t}
\newcommand{\lambb}{\lambda^B_t}
\title{Isotropic Wave  Turbulence with simplified kernels: existence, uniqueness and mean-field limit for a class of instantaneous coagulation-fragmentation processes }
\author{Sara Merino-Aceituno\\
\textit{Imperial College London}\\
\textit{s.merino-aceituno@imperial.ac.uk}}
\begin{document}

\maketitle

\begin{abstract}
The isotropic 4-wave kinetic equation is considered in its weak formulation using model (simplified) homogeneous kernels. Existence and uniqueness of solutions is proven in a particular setting where the kernels have a rate of growth at most linear. We also consider finite stochastic particle systems undergoing instantaneous coagulation-fragmentation phenomena and give conditions  in which this system approximates the solution of the equation (mean-field limit).

\end{abstract}

\tableofcontents

\paragraph{Acknowledgements.}
This work was carried during the author's PhD at the CCA (Cambridge Centre for Analysis) Doctoral Training Centre. This work and author are greatly indebted to James Norris for all his guidance, teaching and corrections.

Thanks to Cl\'ement Mouhot for suggesting to work on the wave kinetic equation and useful discussions. Thanks to Martin Taylor for pointing at a useful report on `The Spatially Homogenous Boltzmann Equation and Particle Dynamics'. Thanks to Amit Einav and Colm Connaughton for useful discussions. 

During this project, the author was supported by the UK Engineering and Physical Sciences Research Council (EPSRC) grant EP/H023348/1 for the University of Cambridge Centre for Doctoral Training, the Cambridge Centre for Analysis.

\section*{Notation}
\begin{eqnarray*}
\R_+ &=&[0,\infty);\\
\mathcal{B} &=& \mbox{space of bounded measurable functions with bounded support};\\
D&=& \{(\o_1, \o_2, \o_3)\in \R_+^3 \, |\, \o_1 + \o_2 \geq\o_3\};\\
\kvec &&\mbox{wavevector, it belongs to } \R^N;\\
\o(\kvec) && \mbox{dispersion relation};\\
\overline{T} &=& \overline{T}(\kvec_1,\kvec_2,\kvec_3,\kvec) \mbox{ interaction coefficient};\\
\mathcal{P}(\R_+) && \mbox{space of probability measures in }\R_+ \\
\mathcal{M}(\R_+) &&\mbox{set of finite measures on $\R_+$}.
\end{eqnarray*}

\section{Introduction}

Wave turbulence  (\cite{zakharov2004one,zakharov1992kolmogorov,nazarenko2011wave}, \cite[Entry turbulence]{scott2006encyclopedia}) describes weakly non-linear systems of dispersive waves. The present work focuses in the case of 4 interacting waves. 

We start with a brief presentation of the general 4-wave kinetic equation and move quickly to consider the isotropic case with simplified kernels, which is the object of study of the present work, and present the main results.

We give a brief account on the theory of wave turbulence in Section \ref{sec:brief_account}. The rest of the text consists on the proofs of the main theorems.

\subsection{The 4-wave kinetic equation.}

Using in shorthand $n_i=n(\kvec_i,t)$, $n_k=n(\kvec,t)$, $\omega_i=\omega(\kvec_i)$ and $\omega=\omega(\kvec)$, the \textbf{4-wave kinetic equation} is given by
\begin{eqnarray} \label{eq:kinetic_wave_equation}
\frac{d}{dt}n(\kvec, t) &=& 4\pi \int_{\R^{3N}} \overline{T}^2(\kvec_1, \kvec_2, \kvec_3, \kvec) (n_1n_2n_3 +n_1n_2n_k -n_1n_3n_k-n_2n_3n_k) \\
&& \qquad \times \delta(\omega_1+\omega_2-\omega_3 -\omega) \delta(\kvec_1+\kvec_2-\kvec_3-\kvec) d\kvec_1d\kvec_2d\kvec_3. \nonumber
\end{eqnarray}
where $\kvec\in \R^N$ is called \textbf{wavevector}; the function $n=n(\kvec,t)$ can be interpreted as the spectral density (in $\kvec$-space) of a wave field and it is called \textbf{energy spectrum}; $\omega(\kvec)$ is the \textbf{dispersion relation}; and 
$$\overline T_{123k}:= \overline T(\kvec_1, \kvec_2, \kvec_3, \kvec) $$
is the \textbf{interaction coefficient}.

$$E=\int_{\R^N}\o(\kvec)\, n(\kvec)d\kvec, \quad
W=\int_{\R^N} n(\kvec) d\kvec$$
correspond to the total energy and the waveaction (total number of waves), respectively. These two quantities are conserved formally.

\paragraph{Properties of the dispersion relation and the interaction coefficient.} $\omega(\kvec)$ and $T_{123k}$ are homogeneous, i.e., for some $\alpha>0$ and $\beta\in\mathbb{R}$
$$\omega(\xi \kvec) = \xi^{\alpha} \omega(\kvec), \qquad \overline T(\xi \kvec_1, \xi \kvec_2, \xi \kvec_3, \xi \kvec) = \xi^\beta  \overline T(\kvec_1, \kvec_2, \kvec_3, \kvec) \qquad \xi>0.$$

Moreover the interaction coefficient possesses the following symmetries
$$\overline T_{123k}=\overline T_{213k}=\overline T_{12k3}=\overline T_{3k12}.$$

\paragraph{Example: shallow water.} 
In the case of shallow water we deal with weakly-nonlinear waves on the surface of an ideal fluid in an infinite basin of constant depth $h$ small. In this case (\cite{zakharov1999statistical}) we have that $\alpha=1$, $\beta=2$, dimension is 2 and
\begin{equation}\label{eq:T_shallow_water}
T(\kvec_1,\kvec_2,\kvec_3,\kvec)=-\frac{1}{16\pi^2h}\frac{1}{(k_1k_2k_3k)^{1/2}} \left[(\kvec_1\cdot\kvec_2)(\kvec_3\cdot\kvec)+(\kvec_1\cdot \kvec_3)(\kvec_2\cdot \kvec) + (\kvec_1\cdot\kvec)(\kvec_2\cdot\kvec_3)\right].
\end{equation}
In general $T$ will be given by very complex expressions, see for example \cite{zakharov1992kolmogorov}.

\paragraph{Resonant conditions and the $\delta$ distributions.} The delta distributions appearing in  equation \eqref{eq:kinetic_wave_equation} correspond to the so-called resonant conditions:
\begin{eqnarray*}
\kvec_1+\kvec_2&=&\kvec_3+\kvec\\
\o(\kvec_1) + \o(\kvec_2)&=&\o(\kvec_3)+\o(\kvec).
\end{eqnarray*}
This imposes the conservation of energy and momentum in the wave interactions. 

\subsection{The \textit{simplified} weak isotropic 4-wave kinetic equation.}
We focus our study on the weak formulation of the isotropic 4-wave kinetic equation defined against functions in $\mathcal{B}(\R^N)$; the set of bounded measurable functions with bounded support in $\R^N$.  

More specifically, we assume that $n(\kvec)=n(k)$ is a radial function (isotropic). 
Then, using the relation $\o(\kvec) =k^\alpha$, we study the evolution of the \textbf{angle-averaged frequency spectrum} $\mu=\mu(d\o)$ which corresponds to 
 $$\mu(d\omega):=\frac{|S^{N-1}|}{\alpha} \o^{\frac{N-\alpha}{\alpha}}n(\o^{1/\alpha})d\o,$$
 where $S^{N-1}$ is the $N$ dimensional sphere.
 The total number of waves (waveaction) and the total energy are now expressed respectively as
\begin{eqnarray} \label{eq:totalNumberOfWaves}
W&=& \int^\infty_0 \mu(d\omega)\\
E &=& \int^\infty_0 \omega \mu(d\omega). \label{eq:total_energy}
\end{eqnarray}

The weak form of the isotropic equation is given formally by 
\begin{equation} \label{eq:isotropic_4_wave_equation_weak}
\mu_t = \mu_0 + \int^t_0 Q(\mu_s, \mu_s, \mu_s) \, ds
\end{equation}
where $Q$ is defined against functions $f\in \mathcal{B}(\R_+)$ as
\begin{eqnarray*}
\la f, Q(\mu, \mu,\mu) \ra &=& \frac{1}{2} \int_{D}  \mu\doo \mu\dotw \mu\doth K(\o_1,\o_2,\o_3)  \\
&&\qquad \times[ f(\o_1+\o_2-\o_3) + f(\o_3) -f(\o_2) -f(\o_1) ]
\end{eqnarray*}
where $D:= \{ \R^3_+ \cap (\o_1 + \o_2 \geq \o_3)\}$. See appendix \ref{sec:weak_isotropic_wave_eq} for the formal derivation of this equation.

Formally $K=\K$ is written as
\begin{eqnarray} 
\K &=& \frac{8\pi}{\alpha |S^{N-1}|^4}(\om)^{\frac{N-\alpha}{\alpha}}\\
&& \qquad \int_{\lp S^{N-1}\rp^4} d\s_1d\s_2d\s_3d\s\, \overline{T}^2(\o_1^{1/\alpha} \s_1, \o_2^{1/\alpha}\s_2, \o_3^{1/\alpha}\s_3, (\om)^{1/\alpha} \s) \nonumber\\
&& \qquad \qquad \times \delta(\o_1^{1/\alpha}\s_1+ \o_2^{1/\alpha}\s_2- \o_3^{1/\alpha}\s_3- (\om)^{1/\alpha} \s). \nonumber
\end{eqnarray}
 Notice that formally $K$ is homogeneous of degree
\begin{equation}
	\label{eq:lambda} \lambda:=\frac{2\beta-\alpha}{\alpha}.
\end{equation}

\bigskip

\textbf{ Our starting point is equation \eqref{eq:isotropic_4_wave_equation_weak}  considering \textit{simplified kernels} $K$}. 
In this  work we do not study the relation between the interaction coefficient $\overline{T}$ and $K$. 
 Specifically, we will consider the following type of kernels:
\begin{definition} We say that $K$ is a \textbf{model kernel} if
\begin{itemize}
	\item $K: \R_+^3 \to \R_+$;
	\item $K$ is continuous in $\R^3_+=[0,\infty)^3$;
	\item $K$ is homogeneous of degree $\lambda$;
	\item $K(\o_1,\o_2,\o_3)=K(\o_2,\o_1,\o_3)$ for all $(\o_1, \o_2,\o_3)\in \R_+^3$.
\end{itemize}

\end{definition} 
 
 Some examples of model kernels are:
\begin{eqnarray} \nonumber
K(\o_1,\o_2,\o_3) &=& \frac{1}{2}\lp \o_1^p \o_2^q \o_3^r + \o_1^q \o_2^p \o_3^r \rp \quad \mbox{with }p+q+r=\lambda,\\
\K &=& (\o_1\o_2\o_3)^{\lambda/3}, \label{eq:bestK}\\
\K &=& \frac{1}{3}(\o_1^\lambda +\o_2^\lambda + \o_3^\lambda). \nonumber
\end{eqnarray}

The main question we want to address is:
\begin{quote}
\textsc{For which types of kernels $K$ there is existence and uniqueness of solutions for equation \eqref{eq:isotropic_4_wave_equation_weak} and, moreover, can this solution(s) be taken as the mean-field limit of a specific stochastic particle system?}
\end{quote}

The present work gives a positive answer for a particular class of kernels as explained in the next section, but first, for the motivation of the problem, we need to answer the two following questions:

\paragraph{a) Why is it relevant to study the weak isotropic 4-wave kinetic equation with simplified kernels?}
\mbox{}

The present work is inspired on the article \cite{connaughton2009numerical} from the physics literature on wave turbulence. In \cite{connaughton2009numerical} the author works with the 3-wave kinetic equation and considers its isotropic version also assuming simplified kernels. The idea is that the 3-wave kinetic equation can be interpreted as a process where particles coagulate and fragment. This interpretation allows to use numerical methods coming from the theory of coagulation-fragmentation processes, which can be applied to this type of simplified kernels.

As in \cite{connaughton2009numerical}, ignoring the specific shape of the interaction coefficient $\overline{T}$ is not uncommon in the wave turbulence literature; in general the shape of $\overline{T}$ is too complex, too messy to extract information. Moreover, the most important feature in wave turbulence, the steady states called KZ-spectrum, depend only on the parameters $\alpha$, $\beta$ and $N$. That is why in the physics literature $\overline{T}$ plays a secondary role, sometimes no role at all.

It is believed that only the asymptotic scaling properties of the kernel will affect the asymptotic behaviour of the solution. This is similar to what happens in the case of the Smoluchowski's coagulation equation, where homogeneous kernels give rise to self-similar solutions (scaling solutions) in some cases. The hypothesis that solutions become self-similar in the long run under the presence of an homogeneous kernel is called \textbf{dynamical scaling hypothesis}, see \cite{mischler2011uniqueness} for more on this. In the case of wave turbulence we expect this self-similar solutions to correspond to the steady states given by the KZ-spectrum.

Proving the dynamical scaling hypothesis for the simplified isotropic 4-wave kinetic equation under the assumptions of Theorem \ref{eq:existence_solutions_kinetic_4wave} (existence of solutions) will imply proving the validity of the KZ-spectrum for this simplified kernels (if there is correspondence between the two). This would provide a great indication of the mathematical validity of the theory of wave turbulence.

\paragraph{b) Why consider the isotropic case?}

There are examples in the physics literature where the phenomena are considered to behave isotropically (like in Langmuir waves for isotropic plasmas and shallow water with flat bottom).

The main reason though to consider the isotropic case is that it makes easier to get a mean-field limit from discrete stochastic particle systems. Suppose that we want to find a discrete particle system that approximates the dynamics of \eqref{eq:kinetic_wave_equation}. For given waves with wavenumbers $\kvec_1, \kvec_2,\kvec_3$, we want to see if they interact. On one hand, due to the resonance conditions $\kvec$ defined as
$$\kvec= \kvec_1+\kvec_2 -\kvec_3$$
is uniquely determined. On the other hand, on top we must add the constraint
$$\omega=\omega_1+\o_2-\o_3$$
and this in general will not be satisfied. Therefore, if we consider systems with a finite number of particles, in general, interactions will not occur and the dynamics will be constant.

We go around this problem by considering the isotropic case. By assuming that  $n=n(k)$ is a rotationally invariant function, we add the degree of freedom that we need.

\subsubsection{Summary of results}

Next we summarise the main results in the present work. These results are the analogous ones presented in the papers \cite{norris1999smoluchowski,norris2000cluster} for the Smoluchowski equation (coagulation model).

\begin{remark}[Strategy] \label{rem:strategy}
We will adapt the proofs by Norris in \cite{norris1999smoluchowski} and \cite{norris2000cluster} for coagulation phenomena.
In the proof by Norris in \cite{norris1999smoluchowski}  sublinear functions $\varphi:\R_+ \to\R_+$ are used, i.e.,
\begin{eqnarray*}
\varphi(\lambda x) &\leq& \lambda \varphi(x), \quad \lambda \geq 1\\
\varphi(x+y)&\leq& \varphi(x)+\varphi(y).
\end{eqnarray*}
These functions are the key to get bounds because of the following property: let $(\mu^n_t)_{t\geq0}$ be a stochastic coagulation process with $n$ particles,  if initially
$$\langle \varphi, \mu^n_0 \rangle \leq \Lambda$$
for some $\Lambda<\infty$, for all $n\in \NN$, then 
$$\langle \varphi, \mu^n_t \rangle \leq \Lambda \mbox{ for all } n,t.$$
Actually, what we obtain is that 
$$\langle\varphi, \mu^n_t \rangle \leq \langle \varphi, \mu^n_0 \rangle$$
thanks to the sublinearity of $\varphi$;  say that two particles of masses $x, y\in \R_+$ coagulate creating a particle of mass $x+y$, then
\begin{equation} \label{eq:ex_varphi}
\varphi(x+y) \leq \varphi(x) + \varphi(y)
\end{equation}
by sublinearity.

\medskip
In general, this idea to get bounds cannot be applied to the type of stochastic particle processes that we are going to consider because they also include fragmentation phenomena;  we will have that in an interaction two particles of masses $\o_1, \o_2\in\R_+$ disappear and two particles of masses $\om, \o_3\in \R_+$ are created. 

To get bounds on this stochastic process using the method above we need an expression analogous to 
\eqref{eq:ex_varphi}, i.e.,
$$\varphi(\om)+\varphi(\o_3) \leq \varphi(\o_1)+\varphi(\o_2).$$
 Therefore we can use Norris method with the appropriate adaptations for the particular case where $\varphi(\o)=\o+c$ for a constant $c$, which we will take to be one. 
 
 Notice that this works as a consequence of the conservation of the energy (given by the $\o$'s, see \eqref{eq:total_energy}) and the conservation of the total number of particles at each interaction.
\end{remark}

\medskip

\begin{definition}
Consider $\varphi(\o)=\o+1$. We say that a kernel $K$ is \textbf{sub-multiplicative} if 
\begin{equation} \label{eq:bounds_on_K}
\K \leq \varphi(\o_1)\varphi(\o_2)\varphi(\o_3).
\end{equation}
\end{definition}

\paragraph{A. Existence and uniqueness of solutions.}

\begin{definition}[Solution and types of solutions]
\label{def:solutions}
We will say that $(\mu_t)_{t<T}$ is a \textit{local solution} if it satisfies \eqref{eq:isotropic_4_wave_equation_weak} for all bounded measurable functions $f$ of bounded support and such that $\la \o, \mu_t \ra \leq \la \o, \mu_0\ra$
 for all $t<T$. If $T=+\infty$ then we have a \textit{solution}. If moreover,
$$\int^\infty_0\o \mu_t(d\o)$$
is finite and constant, then we say that $(\mu_t)_{t<T}$ is \textit{conservative}.

We call any local solution $(\mu_t)_{t<T}$ such that
$$\int^t_0\la \varphi^2, \mu_s\ra\, ds<\infty\quad \mbox{for all } t<T$$
a \textit{strong solution}.
\end{definition}

\begin{remark}
Observe that we  consider the possibility of having not conservative solutions, implying loss of mass. This will correspond to gelation in coagulation and the concept of finite capacity cascades in Wave Turbulence (see \cite{connaughton2009numerical}).
\end{remark}

\begin{theorem}[Existence and uniqueness of solutions]
\label{eq:existence_solutions_kinetic_4wave} 
Consider equation \eqref{eq:isotropic_4_wave_equation_weak} and a given $\mu_0$ measure in $\R_+$.
Define $\varphi(\o)=\o+1$ and assume that $K$ is submultiplicative model kernel.
Assume further that $\la \varphi, \mu_0 \ra <\infty$ (i.e., initially the total number of waves \eqref{eq:totalNumberOfWaves} and the total energy \eqref{eq:total_energy} are finite). Then, if $(\mu_t)_{t<T}$ and $(\nu_t)_{t<T}$ are local solutions, starting from $\mu_0$, and if $(\nu_t)_{t<T}$ is strong, then $\mu_t= \nu_t$ for all $t<T$. Moreover, any strong solution is conservative.

Also, if $\la \varphi^2, \mu_0\ra<\infty$, then there exists a unique maximal strong solution $(\mu_t)_{t<\zeta(\mu_0)}$ with $\zeta(\mu_0)= \la \varphi^2, \mu_0 \ra^{-1} \la \varphi, \mu_0 \ra^{-1}$.
\end{theorem}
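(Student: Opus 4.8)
The plan is to follow the Norris strategy from \cite{norris1999smoluchowski,norris2000cluster}, adapting it to the coagulation-fragmentation structure of the 4-wave collision operator. The proof naturally splits into three parts: (i) an \emph{a priori} bound showing that any local solution keeps $\la\varphi,\mu_t\ra$ controlled, (ii) uniqueness plus conservation when one of the two solutions is strong, and (iii) existence of a maximal strong solution with the stated lifespan.

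\textbf{Step 1: a priori estimates from sublinearity of $\varphi=\o+1$.} Because in each interaction the quadruple $(\o_1,\o_2,\o_3,\om)$ satisfies $\varphi(\om)+\varphi(\o_3)=\varphi(\o_1)+\varphi(\o_2)$ (conservation of energy and of particle number, as noted in Remark~\ref{rem:strategy}), the $\varphi$-moment is nonincreasing along the flow; combined with the defining inequality $\la\o,\mu_t\ra\le\la\o,\mu_0\ra$ of a local solution this gives $\la\varphi,\mu_t\ra\le\la\varphi,\mu_0\ra=:\Lambda$ for all $t<T$. Next, testing the weak equation against $f=\varphi^2$ and using submultiplicativity $K\le\varphi(\o_1)\varphi(\o_2)\varphi(\o_3)$ together with the elementary algebraic identity controlling $\varphi^2(\om)+\varphi^2(\o_3)-\varphi^2(\o_1)-\varphi^2(\o_2)$ by $-2(\varphi(\o_1)-\varphi(\om))(\varphi(\o_1)-\varphi(\o_3))$ type cross-terms (bounded by $2\varphi(\o_1)\varphi(\o_2)$), one gets a differential inequality of Riccati type, $\frac{d}{dt}\la\varphi^2,\mu_t\ra\le C\la\varphi,\mu_t\ra^2\la\varphi^2,\mu_t\ra\le C\Lambda^2\la\varphi^2,\mu_t\ra$; more carefully one wants the cubic bound $\la\varphi,\mu_t\ra\la\varphi^2,\mu_t\ra^{?}$ that yields exactly $\zeta(\mu_0)=\la\varphi^2,\mu_0\ra^{-1}\la\varphi,\mu_0\ra^{-1}$, so the inequality should read $\frac{d}{dt}\la\varphi^2,\mu_t\ra\le \la\varphi,\mu_t\ra\la\varphi^2,\mu_t\ra^2\le\Lambda\la\varphi^2,\mu_t\ra^2$, whose solution blows up no earlier than $\Lambda^{-1}\la\varphi^2,\mu_0\ra^{-1}$.

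\textbf{Step 2: uniqueness and conservation.} For two local solutions $\mu_t,\nu_t$ with $\nu$ strong, introduce the signed measure $\mu_t-\nu_t$ and a suitable norm — following Norris, the natural object is $\sup_{|f|\le\varphi}|\la f,\mu_t-\nu_t\ra|$ or a weighted total-variation distance $\|\mu_t-\nu_t\|_\varphi$. Subtracting the two weak equations, the difference of the cubic operators $Q(\mu,\mu,\mu)-Q(\nu,\nu,\nu)$ telescopes into three terms each linear in $\mu-\nu$ and quadratic in $\mu$ or $\nu$; using submultiplicativity to bound the kernel by $\varphi\otimes\varphi\otimes\varphi$ and the a priori moment bounds from Step~1, plus the strong-solution hypothesis $\int_0^t\la\varphi^2,\nu_s\ra\,ds<\infty$ to absorb the extra $\varphi$ that appears when one tests against functions bounded by $\varphi$ rather than by $1$, one obtains $\|\mu_t-\nu_t\|_\varphi\le\int_0^t g(s)\|\mu_s-\nu_s\|_\varphi\,ds$ with $g\in L^1_{loc}$; Grönwall then forces $\mu_t=\nu_t$. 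Conservation of $\la\o,\mu_t\ra$ for a strong solution follows by justifying that $f=\o$ (or rather $f=\o\wedge R$ with $R\to\infty$) is an admissible test function in the limit: the tail contribution is controlled by $\int_0^t\la\varphi^2,\mu_s\ra\,ds$, so the approximation passes to the limit and the $\o$-moment is exactly constant.

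\textbf{Step 3: existence of the maximal strong solution.} Construct solutions by the standard route: truncate the kernel (e.g. $K_R=K\mathbb{1}_{\{\o_1,\o_2,\o_3\le R\}}$ or $K\wedge R$), for which the operator is bounded and Lipschitz on finite measures of bounded mass, obtain global solutions $\mu^R_t$ by Picard iteration or as mean-field limits of the associated finite particle system, and pass to the limit $R\to\infty$ using the uniform $\varphi$- and $\varphi^2$-moment bounds of Step~1 (which are uniform in $R$ since the algebraic identities only used submultiplicativity, preserved under truncation) together with a tightness/compactness argument in a space of measure-valued paths, plus a diagonal argument. The limit is a strong solution on $[0,\zeta(\mu_0))$, and Step~2 gives uniqueness; maximality and the value of $\zeta(\mu_0)$ come from the Riccati bound. \textbf{The main obstacle} I anticipate is Step~2: making the weak-formulation difference estimate genuinely closed requires choosing the right weighted norm and verifying that every term produced by expanding $Q(\mu,\mu,\mu)-Q(\nu,\nu,\nu)$ is dominated by $\|\mu-\nu\|_\varphi$ times an $L^1_{loc}$ factor — the fragmentation term $f(\om)$, whose argument $\om$ depends on all three integration variables and is unbounded on $D$, is exactly where the strong-solution hypothesis $\int_0^t\la\varphi^2,\nu_s\ra<\infty$ must be spent, and getting the bookkeeping of the $\varphi$-weights right (so that one $\varphi^2$ and not $\varphi^3$ is needed) is the delicate point, just as the change of variables $\o\mapsto\om$ on $D$ and its Jacobian must be handled with care.
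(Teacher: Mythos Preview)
Your overall architecture (Riccati bound for $\la\varphi^2,\cdot\ra$, uniqueness by comparison, existence by approximation) is in the right spirit, but Step~2 as written has a genuine gap, and the paper's argument is structurally different precisely to avoid it.

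\textbf{The gap in Step~2.} A Gr\"onwall estimate on $\|\mu_t-\nu_t\|_\varphi$ cannot close with only $\nu$ strong. When you telescope $Q(\mu,\mu,\mu)-Q(\nu,\nu,\nu)$ and test against $|f|\le\varphi$, the kernel bound $K\le\varphi\otimes\varphi\otimes\varphi$ together with $|f(\om)|\le\varphi(\om)\le\varphi(\o_1)+\varphi(\o_2)$ produces terms of the form $\varphi(\o_i)^2\varphi(\o_j)\varphi(\o_k)$. In the pieces of the telescoping that are quadratic in $\mu$ (e.g.\ $Q(\mu-\nu,\mu,\mu)$), the factor $\varphi^2$ inevitably lands either on the $\mu-\nu$ slot (giving $\la\varphi^2,|\mu-\nu|\ra$, which is not controlled by $\|\mu-\nu\|_\varphi$) or on a $\mu$ slot (giving $\la\varphi^2,\mu_s\ra$, which is not assumed finite since $\mu$ is only local). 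There is no telescoping that puts all the $\varphi^2$ weight on $\nu$ alone. This is exactly the obstruction that forces Norris to abandon the direct Gr\"onwall route in \cite{norris1999smoluchowski}, and the same obstruction is present here. Your closing paragraph correctly identifies this as the delicate point, but the resolution is not bookkeeping: the approach itself does not work for a merely local $\mu$.

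\textbf{What the paper does instead.} The paper never compares two solutions directly. It introduces, for each bounded $B\subset\R_+$, an auxiliary pair $(\mu^B_t,\lambda^B_t)$ solving a truncated equation with an ``overflow'' scalar $\lambda^B_t$ that overestimates the $\varphi$-mass escaping $B$. Existence for this system is by Picard iteration on $\mathcal M_B\times\R$ (everything is bounded on $B$), and nonnegativity/monotonicity in $B$ are proved via an integrating-factor trick that turns the equation into one with a positivity-preserving right-hand side. One then shows the one-sided comparison $\mu^B_t\le\nu_t$ for \emph{any} local solution $\nu_t$ by the same positivity device. Passing $B\uparrow[0,\infty)$ yields a candidate $\mu_t=\lim_B\mu^B_t$ and a defect $\lambda_t=\lim_B\lambda^B_t$, with $\mu_t\le\nu_t$ and $\la\varphi,\mu_t\ra+\lambda_t\ge\la\varphi,\nu_t\ra$. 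If $\nu$ is strong, one can pass to the limit in the $\lambda$-equation to get $\dot\lambda_t=(\lambda_t^2+2\lambda_t\la\varphi,\mu_t\ra)\la\varphi^2,\mu_t\ra$ with $\lambda_0=0$, forcing $\lambda_t\equiv0$; then $\mu_t=\nu_t$. The Riccati bound is carried out on $\mu^B_t$ (where $\varphi^2$ is a legitimate test function because $B$ is bounded), not on an actual solution; this also addresses the gap implicit in your Step~1, where you test a local solution against the inadmissible $f=\varphi^2$.

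In short: your Step~1 Riccati inequality and the conservation argument for strong solutions are correct in spirit and match the paper, but the uniqueness mechanism must be replaced by the auxiliary-process/comparison argument, which simultaneously furnishes existence without a separate compactness step.
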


The proof of this theorem will be an adaptation of \cite[Theorem 2.1]{norris1999smoluchowski}.

\paragraph{B. Mean-field limit (coagulation-fragmentation phenomena).}

We will consider a system of stochastic particles undergoing coagulation-fragmentation phenomena. The basic idea is that three particles $(\o_1,\o_2,\o_3)$ with $\o_1+\o_2\geq \o_3$ will interact at a given rate $\K$. In the interaction, first $\o_1$ and $\o_2$ coagulate to form $\o_1+\o_2$ and then, under the presence of $\o_3$ the coagulant splits into two other components which are $\om$ and a new $\o_3$ (fragmentation). So interactions are 
$$[\o_1,\o_2,\o_3] \mapsto [\o_1+\o_2-\o_3,\o_3,\o_3].$$
Note that we assume that $K$ is symmetric in the first two variables because in the interactions the role of $\o_1$ and $\o_2$ is symmetric.

We will define and build for each $n\geq 1$, $(X_t^n)_{t\geq 0}$ a instantaneous coagulation-fragmentation stochastic particle system of $n$ particles (Section \ref{sec:coagulation_fragmentation_stochastic_process}) following the previous ideas.

We will approximate the solutions to the isotropic 4-wave kinetic equation using this coagulation-fragmentation phenomena.  We present here two mean-field limits each of them requiring a different set of assumptions:

\begin{theorem}[First mean-field limit] \label{th:mean_field_limit_unbounded_kernel}
Assume that for $\tilde \varphi(\o) =\o^{1-\gamma}$, $\gamma\in (0,1)$ it holds that $K$ is a model kernel with
$$\K \leq \tilde\varphi(\o_1)\tilde\varphi(\o_2)\tilde\varphi(\o_3).$$
Assume also that $\la \o, X_0^n \ra$ is bounded uniformly in $n$ by $\la \o, \mu_0\ra <\infty$, and
$$X^n_0\to \mu_0 \quad \mbox{weakly.}$$
Then the sequence of laws $(X^n_t)_{n\in\NN}$ is tight in the Skorokhod topology. Moreover, under any weak limit law, $(\mu_t)_{t\geq 0}$ is almost surely a solution of equation \eqref{eq:isotropic_4_wave_equation_weak}.
In particular, this equation has at least one solution. 
\end{theorem}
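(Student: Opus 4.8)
}

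The plan is to follow the standard martingale/compactness route for mean-field limits, adapted to the instantaneous coagulation-fragmentation dynamics. First, I would fix a test function $f\in\mathcal{B}(\R_+)$ and write down the Dynkin martingale
$$M^{n,f}_t = \la f, X^n_t\ra - \la f, X^n_0\ra - \int_0^t \la f, Q(X^n_s, X^n_s, X^n_s)\ra\, ds + (\text{discretization correction})$$
associated with the generator of $(X^n_t)_{t\ge 0}$ built in Section \ref{sec:coagulation_fragmentation_stochastic_process}. The point of the assumption $\K \le \tilde\varphi(\o_1)\tilde\varphi(\o_2)\tilde\varphi(\o_3)$ with $\tilde\varphi(\o)=\o^{1-\gamma}$ is that in a single jump $[\o_1,\o_2,\o_3]\mapsto[\om,\o_3,\o_3]$ the change in $\la\o,\cdot\ra$ is exactly zero (energy is conserved by the jump) while the \emph{rate} of jumps involving heavy particles is controlled by a \emph{sub-linear} weight, so one can close a moment estimate: I would show $\la\o,X^n_t\ra \le \la\o,X^n_0\ra$ for all $t$ (energy non-increasing, here in fact conserved at the level of jumps, with at most a loss in the passage from $n$ particles of size $1/n$-type bookkeeping), uniformly in $n$. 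This uniform energy bound plus the $\tilde\varphi$-domination gives a uniform bound on the expected total jump rate and hence on $\E\,\sup_{s\le t}|\la f,X^n_s\ra - \la f, X^n_{s-}\ra|$ and on the predictable quadratic variation $\la M^{n,f}\ra_t$.

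Next I would establish tightness in the Skorokhod space $D([0,\infty),\mathcal{M}(\R_+))$ (with $\mathcal{M}(\R_+)$ given the weak/vague topology, metrized so that $X\mapsto\la f,X\ra$ is continuous for $f$ in a countable convergence-determining family). By a standard criterion (Jakubowski / Aldous), it suffices to check: (i) for each such $f$, the real processes $(\la f,X^n_\cdot\ra)_n$ are tight in $D([0,\infty),\R)$, which follows from the Aldous condition using the uniform bound on the drift $\int_0^t\la f,Q(X^n_s,X^n_s,X^n_s)\ra ds$ together with the martingale bound just obtained; and (ii) a compact-containment condition at each time, which comes from the uniform bound $\sup_n\E\la\o,X^n_t\ra<\infty$ together with $\sup_n\E\la 1,X^n_t\ra<\infty$ (total number of waves controlled, since each jump preserves particle count), so that $\{X : \la 1+\o, X\ra \le R\}$ is vaguely compact and carries most of the mass. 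This yields tightness of $(X^n)_n$ and hence, along a subsequence, convergence in law to some limit $(\mu_t)_{t\ge 0}$.

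Finally I would identify the limit. Along the convergent subsequence, $\la f, X^n_t\ra \to \la f,\mu_t\ra$ and I must pass to the limit in the drift term $\int_0^t \la f, Q(X^n_s,X^n_s,X^n_s)\ra\,ds$. The integrand is a triple integral against $X^n_s\otimes X^n_s\otimes X^n_s$ of the function $(\o_1,\o_2,\o_3)\mapsto K(\o_1,\o_2,\o_3)[f(\om)+f(\o_3)-f(\o_1)-f(\o_2)]\mathbb{1}_D$; this function is continuous on $D$ but \emph{unbounded} (because $K$ grows and $f(\om)$ has unbounded argument), so continuous-mapping does not apply directly. The remedy is a truncation argument: approximate $K$ and the bracket by bounded continuous functions supported on $\{\o_1,\o_2,\o_3\le R\}$, use the continuous mapping theorem plus $X^n\Rightarrow\mu$ on the truncated functional, and control the tail uniformly in $n$ using the $\tilde\varphi$-domination and the uniform energy bound — precisely the estimate $K\le\tilde\varphi(\o_1)\tilde\varphi(\o_2)\tilde\varphi(\o_3)$ with $\sum_i(1-\gamma)<3$ lets one bound the contribution of large $\o_i$ by (uniformly bounded moments) times a vanishing factor as $R\to\infty$. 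Together with $M^{n,f}_t\to 0$ in $L^2$ (its quadratic variation is $O(1/n)$-small after the rate estimate), this shows $\la f,\mu_t\ra = \la f,\mu_0\ra + \int_0^t\la f,Q(\mu_s,\mu_s,\mu_s)\ra ds$ a.s. for each fixed $t$ and $f$, and then for all $t$ simultaneously by right-continuity and separability; the energy bound $\la\o,\mu_t\ra\le\la\o,\mu_0\ra$ passes to the limit by lower semicontinuity. Hence $(\mu_t)$ is a local solution on $[0,\infty)$, i.e.\ a solution, proving existence.

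I expect the main obstacle to be exactly this last step: the uniform-in-$n$ tail control needed to pass to the limit in the nonlinear, unbounded collision integral. This is where the strict sub-linearity $\gamma>0$ is essential — it is not merely a convenience but what makes the jump rates of heavy-particle interactions summable against the conserved energy, so that mass cannot escape to infinity in the limit and the truncation error is genuinely uniform in $n$. A secondary technical point requiring care is the correct handling of the instantaneous fragmentation rule inside the generator (the jump must be well-defined whenever $\o_1+\o_2\ge\o_3$, i.e.\ on $D$), and checking that the discretization corrections arising from the finite-$n$ bookkeeping vanish in the limit.
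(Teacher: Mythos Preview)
Your proposal is correct and follows essentially the same route as the paper: martingale decomposition with $O(1/n)$ quadratic variation, Jakubowski-type tightness using the uniform energy bound for compact containment, and identification of the limit via truncation of the trilinear collision term with the tail controlled by the strict sub-linearity of $\tilde\varphi$ (this is exactly the content of the paper's Lemma~\ref{eq:limit_trilinear_operator_unbounded_kernel_4}). The only cosmetic difference is that the paper uses a two-scale truncation with parameters $p(\eps)=\eps^{-1/\gamma}$ and $\kappa\in(0,\gamma/[2(1-\gamma)])$, whereas your single cutoff at level $R$ works equally well since $\int_{\o>R}\tilde\varphi(\o)\,dX^n_s \le R^{-\gamma}\la\o,X^n_s\ra$.
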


The proof of this theorem will be an adaptation of \cite[Theorem 4.1]{norris1999smoluchowski}.

\medskip
Denote by $d$ some metric on $\mathcal{M}$, the set of finite measures on $\R_+$, which is compatible with the topology of weak convergence, i.e., 
\begin{equation} \label{eq:definition_metric_weak_convergence}
d(\mu_n, \mu)\to 0 \quad \mbox{if and only if} 
\quad \la f, \mu_n\ra \to \la f, \mu \ra 
\end{equation}
for all bounded continuous functions $f: \R_+\to \R$. We choose $d$ so that $d(\mu, \mu') \leq \|\mu-\mu'\|$ for al $\mu, \mu'\in \mathcal{M}$.

\begin{theorem}[Second mean-field limit] \label{th:mean-field-limit-complete}
Let $K$ be a model kernel and let $\mu_0$ be a measure on $\R_+$. Assume that for $\varphi(\o)=\o+1$ it holds
$$\K \leq \phiall$$
and that $\la \varphi, \mu_0\ra<\infty$ and $\la \varphi^2, \mu_0\ra<\infty$. Denote by $(\mu_t)_{t<T}$ the maximal strong solution to \eqref{eq:isotropic_4_wave_equation_weak} provided by Theorem \ref{eq:existence_solutions_kinetic_4wave}. Let $(X^n_t)_{n\in \NN}$ be a sequence of instantaneous coagulation-fragmentation particle system, with jump kernel $K$. Suppose that
$$d(\varphi X^n_0, \varphi \mu_0) \to 0$$
as $n\to \infty$. Then, for all $t<T$, 
$$\sup_{s\leq t} d(\varphi X^n_s, \varphi \mu_s) \to 0$$
in probability, as $n\to \infty$.
\end{theorem}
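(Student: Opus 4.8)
The plan is to follow the strategy of \cite[Theorem 4.2]{norris1999smoluchowski}: combine the tightness/limit-point statement of Theorem \ref{th:mean_field_limit_unbounded_kernel} (or a variant of it valid under the submultiplicative assumption here) with the uniqueness statement of Theorem \ref{eq:existence_solutions_kinetic_4wave} to upgrade ``weak convergence along a subsequence to \emph{some} solution'' into ``convergence in probability of the whole sequence to \emph{the} maximal strong solution''. The key point is that $(\mu_t)_{t<T}$ is strong, and any local solution equals it on $[0,T)$ by Theorem \ref{eq:existence_solutions_kinetic_4wave}, so once we know every weak limit point of $(X^n)$ is a local solution started from $\mu_0$, all limit points coincide with the deterministic trajectory $(\mu_t)$, which forces convergence in probability. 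The weighting by $\varphi$ appears because the natural conserved/controlled quantity is $\la\varphi,X^n_t\ra$, and the metric $d(\varphi X^n_s,\varphi\mu_s)$ is exactly what the sublinear-type bound of Remark \ref{rem:strategy} lets us control.

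Concretely I would proceed in the following steps. First, establish an a priori moment bound: since $\varphi(\o)=\o+1$ and each interaction $[\o_1,\o_2,\o_3]\mapsto[\om,\o_3,\o_3]$ conserves $\sum\o_i$ and the particle number, we get $\la\varphi,X^n_t\ra\le\la\varphi,X^n_0\ra$ for all $t$, uniformly in $n$; and using $K\le\phiall$ together with $\la\varphi^2,\mu_0\ra<\infty$ one controls $\la\varphi^2,X^n_t\ra$ up to time $T=\zeta(\mu_0)$ (this is where the lifetime $\zeta$ enters). Second, write the semimartingale decomposition $\la f,X^n_t\ra=\la f,X^n_0\ra+\int_0^t\la f,Q(X^n_s,X^n_s,X^n_s)\ra\,ds+M^{n,f}_t$ for $f\in\mathcal B$, identify the drift as the generator applied to the empirical measure (up to $O(1/n)$ corrections from the coagulation-fragmentation rates acting on ordered triples versus the measure $Q$), and bound the bracket of the martingale $M^{n,f}$ by $O(1/n)$ times moments already controlled, so $M^{n,f}_t\to0$ in $L^2$ uniformly on $[0,t]$. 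Third, prove tightness of $(X^n)$ in the Skorokhod space $D([0,T),\mathcal M)$ by the Aldous/Rebolledo criterion applied to the processes $\la f,X^n_\cdot\ra$ for $f$ in a convergence-determining countable family, using the moment bounds and the martingale estimate. Fourth, pass to the limit along any subsequence: the limit $(\mu_t)$ satisfies \eqref{eq:isotropic_4_wave_equation_weak} almost surely (continuity of $K$ and the weight bounds make $\la f,Q(\cdot,\cdot,\cdot)\ra$ continuous on the relevant set of measures), and $\la\o,\mu_t\ra\le\la\o,\mu_0\ra$ by lower semicontinuity, so it is a local solution; by Theorem \ref{eq:existence_solutions_kinetic_4wave} it equals the maximal strong solution, hence is deterministic. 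Finally, deterministic limit plus tightness gives $X^n\to\mu$ in probability in the Skorokhod topology, and a standard argument promotes this to $\sup_{s\le t}d(\varphi X^n_s,\varphi\mu_s)\to0$ in probability for each $t<T$, using that $s\mapsto\mu_s$ is continuous and that $\la\varphi,X^n_s\ra$ stays bounded so multiplying by $\varphi$ is compatible with weak convergence.

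The main obstacle I expect is the interplay between the \emph{maximality/lifetime} $T=\zeta(\mu_0)$ and the uniform-in-$n$ control of the second moment $\la\varphi^2,X^n_t\ra$: the deterministic estimate giving $\zeta(\mu_0)=\la\varphi^2,\mu_0\ra^{-1}\la\varphi,\mu_0\ra^{-1}$ must be reproduced at the level of the stochastic particle system with error terms that vanish as $n\to\infty$, and one must ensure the particle systems do not ``blow up'' (lose control of $\varphi^2$) strictly before $T$ on a set of non-vanishing probability — otherwise the limit could fail to be a solution up to $T$. This requires a careful Gronwall-type argument on $\E\la\varphi^2,X^n_s\ra$ using the submultiplicative bound $K\le\phiall$ and the sublinearity inequality $\varphi(\om)^2+\varphi(\o_3)^2\le(\varphi(\o_1)+\varphi(\o_2))^2$-type manipulations from Remark \ref{rem:strategy}, keeping track of the $O(1/n)$ discrepancies between sampling ordered triples without replacement and the integral against $\allmut$. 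A secondary technical nuisance is that $Q$ is cubic and only the first two arguments of $K$ are symmetric, so the combinatorial bookkeeping for the generator of $X^n$ (which triples, in which order, at which rate) must be done carefully to match $\la f,Q(\mu,\mu,\mu)\ra$ in the limit; but this is routine once the moment bounds are in place.
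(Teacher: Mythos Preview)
Your approach is genuinely different from the paper's, and the gap you yourself flag is real and, under the stated hypotheses, fatal to the direct tightness-plus-uniqueness route.

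The paper does \emph{not} try to control $\la\varphi^2,X^n_t\ra$ uniformly in $n$ and then pass to the limit in $Q$ directly. Instead it reuses the truncation machinery from Section~2: for each bounded $B\subset\R_+$ it builds a coupled auxiliary particle system $(X^{B,n}_t,\Lambda^{B,n}_t)$ that dominates the truncated process $\mathbb{1}_B X^n_t$ from below and tracks the ``escaped $\varphi$-mass'' via $\Lambda^{B,n}_t$, exactly mirroring the deterministic pair $(\mu^B_t,\lambda^B_t)$ of \eqref{eq:auxiliary_equation_existence_proof}. On the bounded set $B$ the kernel is bounded, so Proposition~\ref{prop:comparison_auxiliary_problem_stochastic} gives $\sup_{s\le t}d(X^{B,n}_s,\mu^B_s)\to 0$ and $\sup_{s\le t}|\Lambda^{B,n}_s-\lambda^B_s|\to 0$ in probability by the \emph{bounded}-kernel argument. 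The role of the strong solution is then purely deterministic: because $\int_0^t\la\varphi^2,\mu_s\ra\,ds<\infty$, one can choose $B$ so that $\lambda^B_t<\delta/2$, and the coupling inequalities $\|\varphi(X^n_s-X^{B,n}_s)\|\le\Lambda^{B,n}_s$ and $\|\varphi(\mu_s-\mu^B_s)\|\le\lambda^B_s$ close the estimate. No second-moment control of $X^n$ is ever needed.

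Your plan, by contrast, requires passing to the limit in the cubic term $\la f,Q(X^n_s)\ra$ under the merely submultiplicative bound $K\le\phiall$. The argument that does this in the first mean-field limit (Lemma~\ref{eq:limit_trilinear_operator_unbounded_kernel_4}) relies essentially on the \emph{strictly sublinear} weight $\tilde\varphi(\o)=\o^{1-\gamma}$, so that $\tilde\varphi(\o)/\o\to 0$ and the large-$\o$ tail of the integrand is killed by the conserved first moment. With $\varphi(\o)=\o+1$ one has $\varphi(\o)/\o\to 1$, and tail control would require uniform integrability of $\varphi$ under $X^n_t$, i.e.\ exactly the uniform bound on $\la\varphi^2,X^n_t\ra$ you hope to get by Gronwall. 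But the hypothesis is only $d(\varphi X^n_0,\varphi\mu_0)\to 0$, which does not give $\la\varphi^2,X^n_0\ra\to\la\varphi^2,\mu_0\ra$; without this, your pathwise differential inequality for $\la\varphi^2,X^n_t\ra$ cannot yield a bound valid up to the deterministic lifetime $T=\zeta(\mu_0)$. The truncation/coupling device is precisely what replaces this missing second-moment input.
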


The proof of this theorem will be an adaptation of \cite[Theorem 4.4]{norris1999smoluchowski}.
\medskip

Many mathematical works have been devoted to the study of the coagulation-fragmentation equation. We base our work on \cite{norris1999smoluchowski} and \cite{norris2000cluster} but the reader is also referred to \cite{eibeck2000approximative}, \cite{escobedo2005self}, \cite{laurenccot2002discrete}, \cite{wagner2005explosion}, as an example.

\paragraph{C. Applications}
For the physical applications we consider $K$ given by expression \eqref{eq:bestK}, i.e. $\K=(\o_1\o_2\o_3)^{\lambda/3}$, which is submultiplicative (since $\o^\lambda \leq \o+1, \;\lambda\in[0,1]$). 

If $\lambda\in[0,3)$ then we can apply all the previous theorems. For the case $\lambda=3$ the theorems also apply with the exception of the first mean-field limit, Theorem \ref{th:mean_field_limit_unbounded_kernel}.

Here are some examples:
\begin{itemize}
\item \textit{Langmuir waves in isotropic plasmas and spin waves}: $\beta=2$, $\alpha=2$, so $\lambda=1$ (the dimension is $N=3$).
\item \textit{Shallow water} (isotropic in a flat bottom, \cite{zakharov1999statistical}): $\beta=2$, $\alpha=1$, so $\lambda=3$  (dimension $N=2$).
\item \textit{Waves on elastic plates}: $\beta=3$, $\alpha=2$, so $\lambda=2$ (dimension $N=2$).
\end{itemize}

However, these results cannot be applied to other systems like  gravity waves on deep water, nonlinear optics and Bose-Einstein condensates.

\subsection{Some notes on  the physical theory of Wave Turbulence}
\label{sec:brief_account}

The theory of Wave Turbulence is a relatively recent field where most of the results are due to physicists. Next, we present some concepts of the theory extracted from \cite{zakharov2004one,zakharov1992kolmogorov,nazarenko2011wave}, \cite[Entry turbulence]{scott2006encyclopedia}. All the results are formal and require a rigorous mathematical counterpart.

Wave turbulence is formed by the so-called weak wave turbulence (whose central object is the kinetic wave equation) and the so-called `coherent structures'.

Wave turbulence takes place on the onset of weakly non-linear dispersive waves. The assumption on weak non-linearity allows the derivation of the kinetic wave equation  of which \eqref{eq:kinetic_wave_equation} is an example for the case of 4 interacting waves. In the general case, $N$ waves interact in resonant sets transferring energy.

Differences between physical systems are given by the dimension of the system, the number of interacting waves and the medium itself (which is described by the dispersion relation and the interaction coefficient).

\paragraph{A. Derivation of the wave kinetic equation and the Cauchy theory.} There is not a rigorous mathematical derivation and Cauchy theory for the kinetic wave equation. In this work we prove existence and uniqueness of solutions for the isotropic weak 4-wave kinetic equation in some restricted setting.
\begin{itemize} 
	\item \emph{General procedure:} in \cite[Section 6.1.1]{nazarenko2011wave} it is given a scheme of the general procedure to derive the kinetic wave equation. We do not reproduce here the explanation there but point at some of the key steps:
	\begin{itemize}
		\item the starting point is a nonlinear wave equation (mostly written in Hamiltonian form);
		\item then the equation is written in Fourier space in $\kvec$ using the interaction representation between waves;
		\item using the weakness of the nonlinearity hypothesis, a perturbation analysis is done expanding around a small nonlinearity parameter;
		\item perform statistical averaging.
 \end{itemize}	
	\item \emph{Example: shallow water}. In the case of shallow (or deep water) the vertical coordinate is considered to be
	$$-h<z<\eta(\mathbf{r}), \qquad\mathbf{r}=(x,y)$$
	and the velocity field $V$ is incompressible and a potential field,
	$$\mbox{div } V=0, \qquad V= \nabla \Phi$$
	where the potential satisfies the Laplace equation
	$$\Delta \Phi=0$$
	with boundary conditions
	$$\Phi|_{z=\eta}=\Psi(\mathbf{r},t), \qquad\Phi|_{z=-h}=0.$$
	The Hamiltonian is consider to by the sum $H=T+U$ of kinetic and potential energies defined as follows:
	\begin{eqnarray*}	
		T&=&\frac{1}{2}\int dr \int^\eta_{-h}(\nabla \Phi)^2dz,\\
		U&=& \frac{1}{2}g \int \eta^2 dr + \sigma \int \lp \sqrt{1+(\nabla \eta)^2}-1 \rp dr
	\end{eqnarray*}
	where $g$ is the acceleration of gravity and $\sigma$ is the surface tension coefficent. Zakharov \cite{zakharov1998weakly} derived the equations of motion for $\eta$ and $\Psi$ as
	$$\frac{\partial \eta}{\partial t}=\frac{\delta H}{\delta \Psi}, \qquad\frac{\partial \Psi}{\partial t}= - \frac{\delta H}{\delta \eta}.$$
	In \cite{zakharov1999statistical}, Zakharov derives the kinetic wave equation for shallow and deep water starting from these equations.
	\item \emph{The delta distribution}. One of the main issues to study the validity of the kinetic wave equation is the presence of the two delta distributions that make sure that the energy and the total momentum are conserved. 
	\item \emph{$N$-waves}. At the beginning of this work the 4-wave equation was presented. In the general case, the kinetic equation will correspond to $N$ interacting waves, where $N$ is the minimal number such that the interaction operator is non-zero, i.e., such that
		\begin{enumerate}
			\item the $N$-wave resonant conditions are satisfied for a non-trivial set of wave vectors (here `non-trivial set' must be made precise):
			\begin{eqnarray*}
			\o(\kvec_1)\pm\o(\kvec_2)\pm\hdots\pm\o(\kvec_N)&=&0;\\
			\kvec_1 \pm \kvec_2 \pm \hdots \pm \kvec_N;
			\end{eqnarray*}
			\item the $N$-wave interaction coefficient $\overline{T}$ must be non-zero over this set.
		\end{enumerate}
	
\end{itemize}

\paragraph{B. The Kolmogorov-Zakharov (KZ) spectra.} The Kolmogorov-Zakharov spectrum corresponds to steady states of the system.
\begin{itemize}
	\item \emph{Derivation, validity (locality) and stability.} The derivation of the KZ spectrum is explained in \cite[Chapter 3]{zakharov1992kolmogorov}. For the derivation, only the homogeneity index of the interaction coefficient $\overline{T}$ is needed. However, the validity of the KZ spectrum depends on the condition of `locality', i.e., that only waves with similar wavelength interact. This condition is translated in the finiteness of the interaction integral (see \cite{zakharov1992kolmogorov}  for more details) and it does depend on the particular shape of $\overline{T}$. On the other hand, one should check the stability of the KZ to small perturbations.
	\item \emph{Case of shallow water:}
	for this case, the corresponding Kolmogorov-Zakharov solutions are (\cite{zakharov1999statistical}):
$$n^{(1)}_k \sim k^{-10/3}$$
$$n^{(2)}_k \sim k^{-3}.$$
Observe that there are two solutions; the first one corresponds to the energy flux and the second to the flux of action (corresponding to the waveaction). 
\end{itemize}

\paragraph{Historical note.} The kinetic wave equation was first derived by Nordheim in 1928 \cite{nordheim1928kinetic} in the context of a Bose gas and by Peierls \cite{peierls1929kinetischen} in 1929 in the context of thermal conduction in crystals.

\paragraph{C. Some examples.}

We have already seen the case of shallow water, but there are many more examples.

The Majda-McLaughlin-Tabak model is explained in \cite{zakharov2004one} in dimension 1 where the dispersion relation is given by
$$\omega(\kvec) =k^{\alpha}, \quad \alpha>0$$
where $k=\|\kvec\|$ and
\begin{equation} \label{eq:T_in_MMTmodel} 
	T_{123k}=  (k_1 k_2 k_3 k)^{\beta/4}
\end{equation}
for some $\beta \in \R$. The particular case  $\alpha=\frac{1}{2}$ corresponds to the Majda-McLaughlin-Tabak (MMT) model. 

We  have a four-wave interaction process with \textbf{resonant conditions}:
$$\left\{\begin{array}{l}
\kvec_1+\kvec_2 =\kvec_3+\kvec\\
|k_1|^{1/2}+|k_2|^{1/2}= |k_3|^{1/2}+ |k|^{1/2} .
\end{array}
\right.$$

In this case wave numbers that are non-trivial solutions to these conditions cannot have all the same sign. Moreover, non-trivial solutions can be parametrized by a two parameter family $A$ and $\xi>0$:
\begin{equation} \label{eq:parametrization_action_set}
k_1=-A^2\xi^2, \quad k_2=A^2(1+\xi +\xi^2)^2, \quad k_3= A^2(1+\xi)^2, \quad k=A^2\xi^2(1+\xi)^2.
\end{equation}

When $\beta=0$ the collision rate is bounded. In \cite{zakharov2004one} the authors obtain the following Kolmogorov-type solutions for $\alpha=1/2$ and $\beta=0$:
\begin{eqnarray}
	n & \sim & |k|^{-5/6}\\
	n & \sim & |k|^{-1} \, .
\end{eqnarray}

\medskip
The derivation of the kinetic wave equation is done from the equation
$$i \frac{\partial \psi}{\partial t} = \underbrace{\left| \frac{\partial}{\partial x}\right|^\alpha \psi}_{\mbox{dispersive}} + \lambda \underbrace{\left| \frac{\partial}{\partial x}\right|^{\beta/4} \lp  \left| \left| \frac{\partial}{\partial x}\right|^{\beta/4}\psi \right|^2  \left| \frac{\partial}{\partial x}\right|^{\beta/4}\psi \rp}_{\mbox{non-linearity}} \quad \lambda = \pm1$$
where $\psi(x,t)$ denotes a complex wave field.

\bigskip

Other examples in wave turbulence are (taken from \cite{nazarenko2011wave}):
\begin{itemize}
	\item 4-wave examples
		\begin{itemize}
			\item surface gravity waves; $N=2$, $\alpha=1/2$, $\beta=3$;
			\item langmuir waves in isotropic plasmas, spin waves; $N=3$, $\alpha=2$, $\beta=2$;
			\item waves on elastic plates: $N=2$, $\alpha=2$, $\beta=3$;
			\item Bose-Einstein condensates and non-linear optics: $\alpha=2$, $\beta=0$;
			\item Gravity waves on deep water: $N=2$, $\alpha=1/2$, $\beta=3$.
		\end{itemize}
	\item 3-wave examples
		\begin{itemize}
			\item capillary waves: $N=2$, $\alpha=3/2$;
			\item acoustic turbulence, waves in isotropic elastic media; $N=3$, $\alpha=1$;
			\item interval waves in stratified fluids: $N=1$, $\alpha=-1$;
		\end{itemize}
	\item other examples
		\begin{itemize}
			\item Kelvin waves on vortex filaments: $N=1$, 6-wave interaction, $\alpha=2$.
		\end{itemize}
\end{itemize}

	\section{Existence of solutions for unbounded kernel}


In this section we will follow the steps in \cite[Theorem 2.1]{norris1999smoluchowski} (see Remark \ref{rem:strategy}).

\begin{remark} We make some comments about Theorem \ref{eq:existence_solutions_kinetic_4wave}:
\begin{enumerate}
	\item The statement is correct even if 
$$\K \leq C\phiall$$
for some positive constant $C<\infty$. This only changes the $\zeta(\mu_0)$ into
$$\zeta(\mu_0)=\la \varphi^2, \mu_0 \ra^{-1} C^{-1} \la \varphi, \mu_0 \ra^{-1}.$$
Also notice that by scaling time, we can eliminate the multiplicative constant. 
	\item Notice that in the coagulation case, existence of strong solutions is assured for times $T'= \la \varphi^2, \mu_0\ra^{-1}$. We expect that in the 4-wave equations we can assure existence of strong solutions for larger times. The reason that we do not get that is because when bounding \eqref{eq:estimate_time_strong_sol}, we ignore some negative factors.
	\item We will need to use that for $\varphi(\o)=\o +1$, it holds that for any local solution $(\mu_t)_{t<T}$
\begin{equation} \label{eq:bounds_for_varphi}
	\la \varphi, \mu_t\ra \leq \la \varphi, \mu_0\ra \quad
	\mbox{for all }t<T.
\end{equation}
This is a condition for $\mu_t$ being a solution (see Definition \ref{def:solutions}). Notice that in particular strong solutions fulfilled this condition automatically as they are conservative (this is explained in expression \eqref{eq:strong_solutions_is_conservative}).
	\item We could have defined our set of test functions as including also measurable functions with linear growth (and in an unbounded interval). This way the theorem works the same and we would have that $\la \varphi, \mu_t \ra = \la \varphi, \mu_0\ra$ for all $t$ where the solution exists, i.e., for that particular set of test functions we would only consider conservative solutions.
	\item A main difference with the result obtained in \cite{norris1999smoluchowski} and \cite{norris2000cluster} is that we do not allow $K$ to blow up at zero. 
\end{enumerate}
\end{remark}

\subsection{Proof of Theorem \ref{eq:existence_solutions_kinetic_4wave}}
The rest of this section will consist on the proof of this theorem, which we will split in different propositions. We will follow the idea and structure as in \cite[Theorem 2.1]{norris1999smoluchowski}. We want to apply an iterative scheme on the equation to prove existence of solutions and for that we need estimates on $\| Q(\mu) \|$ and $\|Q(\mu)-Q(\mu')\|$, which, unfortunately, are unavailable in our present case for unbounded kernels. To sort this problem, we will consider an auxiliary process that approximates our looked for solution and that operates on bounded sets. 

This auxiliary process will take the form $(X^B_t, \Lambda^B_t)_{t\geq 0}$ for some bounded set $B$. $\Lambda^B_t$ gives an upper estimate of the effect on $X^B_t$ of the particles outside $B$ and $X^B_t$ will be a lower bound for our process in $B$.

Let $B\subset [0,\infty)$ be bounded. Denote by $\mathcal{M}_B$ the space of finite signed measures supported on $B$. We define $L^B: \mathcal{M}_B \times \mathbb{R} \rightarrow \mathcal{M}_B\times \R$ by the requirement:
\begin{eqnarray*}
\langle (f, a), L^B(\mu, \lambda) \rangle &=& \frac{1}{2} \int_D  (f(\o_1+\o_2-\o_3) \mathbb{1}_{\o_1+\o_2-\o_3\in B} + a\varphi(\om) \mathbb{1}_{\om \notin B} \nonumber\\
&& \qquad + f(\o_3) -f(\o_1) -f(\o_2) ) \K \mu(d\o_1) \mu(d\o_2)\mu(d\o_3) \nonumber\\
&+&(\lambda^2+ 2\lambda \la \varphi ,\mu \ra) \int^\infty_0 \lp a\varphi(\o)- f(\o) \rp \varphi(\o) \mu(d\o)\
\end{eqnarray*}
for all bounded measurable functions $f$ on $(0,\infty)$  and all $a\in \R$ where $D= \{ \R^3_+ \cap \om\geq 0\}$.
We used the notation $\la (f,a), (\mu, \lambda) \ra = \la f, \mu\ra + a \lambda$.

Consider the equation
\begin{equation} \label{eq:auxiliary_equation_existence_proof}
(\mu_t, \lambda_t) = (\mu_0, \lambda_0) + \int^t_0 L^B(\mu_s, \lambda_s)\, ds.
\end{equation}

We admit as a \textit{local solution} any continuous map
$$t \mapsto (\mu_t, \lambda_t):[0,T] \to \mathcal{M}_B \times \R$$
where $T\in (0,\infty)$, which satisfies equation \eqref{eq:auxiliary_equation_existence_proof} for all $t\in[0,T]$.

\begin{proposition}[Existence for the auxiliary process]
\label{prop:existence_auxiliary_process}
Suppose $\mu_0\in \mathcal{M}_B$ with $\mu_0 \geq 0$ and that $\lambda_0\in[0,\infty)$. The equation \eqref{eq:auxiliary_equation_existence_proof} has a unique solution $(\mu_t,\lambda_t)_{t\geq 0}$ starting from $(\mu_0,\lambda_0)$. Moreover, $\mu_t\geq 0$ and $\lambda_t\geq 0$ for all $t$.
\end{proposition}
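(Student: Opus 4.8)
The plan is to read \eqref{eq:auxiliary_equation_existence_proof} as an ordinary differential equation in the Banach space $\mathcal{M}_B\times\R$ normed by $\|(\mu,\lambda)\|:=\|\mu\|+|\lambda|$, and to argue in three stages: local well-posedness by a contraction argument, propagation of positivity, and an a priori bound that upgrades the local solution to a global one. For local well-posedness, note that since $B$ is compact and $K$ is continuous, $\K$ is bounded on $B^3$, and $\varphi(\o)=\o+1$ is bounded on $B$; hence on any ball $\{\|(\mu,\lambda)\|\le R\}$ the right-hand side $L^B$ — a bounded trilinear expression in $\mu$ together with a cubic polynomial in $(\mu,\lambda)$ — obeys $\|L^B(\mu,\lambda)\|\le C_BR^3$ and $\|L^B(\mu,\lambda)-L^B(\mu',\lambda')\|\le C_BR^2\|(\mu,\lambda)-(\mu',\lambda')\|$ for some constant $C_B$ depending only on $B$. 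A Picard iteration on $C([0,\tau];\mathcal{M}_B\times\R)$ with $R=2\|(\mu_0,\lambda_0)\|$ and $\tau\sim (C_BR^2)^{-1}$ then yields a unique local solution, uniqueness among continuous $\mathcal{M}_B\times\R$-valued solutions following from Gr\"onwall applied to the difference of two solutions.

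\emph{Positivity.} Inspecting the definition of $L^B$, the negative contributions to $\dot\mu_t$ come only from the terms $-f(\o_1)-f(\o_2)$ and from the $-f(\o)\varphi(\o)$ part of the last line; they can be written as $-a_t(\o)\,\mu_t(d\o)$ for a nonnegative measurable function $a_t$ that is bounded in terms of $\|\mu_t\|$ and $\lambda_t$. The remaining part of $\dot\mu_t$ is a nonnegative measure whenever $\mu_t\ge0$ and $\lambda_t\ge0$, and the same is true of $\dot\lambda_t$ (its three constituents, from $\mathbb{1}_{\om\notin B}\varphi(\om)$, from $\lambda^2\la\varphi^2,\mu\ra$, and from $2\lambda\la\varphi,\mu\ra\la\varphi^2,\mu\ra$, are all nonnegative there). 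Writing the equation in Duhamel (integrating-factor) form,
\begin{equation*}
\mu_t=e^{-\int_0^t a_s\,ds}\mu_0+\int_0^t e^{-\int_s^t a_u\,du}\,G^\mu_s\,ds,\qquad \lambda_t=\lambda_0+\int_0^t g^\lambda_s\,ds,
\end{equation*}
with $G^\mu_s\ge0$, $g^\lambda_s\ge0$ the gain parts, the map sending $(\mu,\lambda)$ to the right-hand side preserves the cone $\{\mu\ge0,\ \lambda\ge0\}$ and is a contraction on it over a short time interval; its unique fixed point is nonnegative and, by the uniqueness above, coincides with the solution already constructed. Hence $\mu_t\ge0$ and $\lambda_t\ge0$ on the whole interval of existence.

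\emph{A priori bound and globalization.} Pick a bounded measurable $f$ coinciding with $\varphi$ on $B$ and on the bounded set $\{\om:\o_1,\o_2,\o_3\in B\}$ (on $D$ one has $\om\le\o_1+\o_2\le 2\sup B$), and take $a=1$ in the pairing with $L^B$. On $D$, $\varphi(\om)+\varphi(\o_3)=\o_1+\o_2+2=\varphi(\o_1)+\varphi(\o_2)$, so the first integrand in $L^B$ vanishes identically, while the last line vanishes since $f=\varphi$ on $\supp\mu$. Therefore $\la(f,1),L^B(\mu_s,\lambda_s)\ra=0$ for every $s$, which yields the conservation law
\begin{equation*}
\la\varphi,\mu_t\ra+\lambda_t=\la\varphi,\mu_0\ra+\lambda_0=:\Lambda\qquad\text{for all }t.
\end{equation*}
Since $\varphi\ge1$ and $\mu_t,\lambda_t\ge0$, this forces $\|\mu_t\|\le\la\varphi,\mu_t\ra\le\Lambda$ and $0\le\lambda_t\le\Lambda$, a bound uniform in time. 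Consequently the local existence time produced above is bounded below by a positive constant depending only on $B$ and $\Lambda$, so restarting the construction at times $\tau,2\tau,\dots$ extends the unique nonnegative solution to all of $[0,\infty)$.

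\emph{Main obstacle.} The delicate step is the positivity argument: $L^B$ is not manifestly quasi-monotone, and because of the factor $\lambda^2+2\lambda\la\varphi,\mu\ra$ and the $B$-dependent truncations the $\mu$- and $\lambda$-components must be controlled simultaneously; obtaining the integrating-factor splitting with the correct signs and checking it defines a cone-preserving contraction is where the real work lies. Once positivity and the identity $\la\varphi,\mu_t\ra+\lambda_t=\Lambda$ are available, the remaining steps are routine.
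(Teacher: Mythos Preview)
Your proposal is correct and follows essentially the same strategy as the paper: local existence via Picard iteration using the cubic estimates on $L^B$, positivity via an integrating-factor/Duhamel rewriting that absorbs the loss terms and yields a cone-preserving contraction, and globalization via the conservation law $\la\varphi,\mu_t\ra+\lambda_t=\mathrm{const}$. The only cosmetic differences are that the paper first freezes the integrating factor $\theta_t$ along the already-constructed solution and iterates only in $\mu$ (deducing $\lambda_t\ge0$ afterwards from $\dot\lambda_t\ge0$), whereas you let the loss rate $a_s$ vary with the iterate and treat $(\mu,\lambda)$ jointly; both variants work and the underlying idea is identical.
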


The proof is obtained by adapting the one in \cite[Proposition 2.2]{norris1999smoluchowski}.
\begin{proof}
By assumption \eqref{eq:bounds_on_K} it holds that for $\varphi(\o)=\o+1$
$$\K \leq \varo\vart\varth.$$
Observe that $\varphi \geq 1$. By a scaling argument we may assume, without loss, that 
$$\la \varphi, \mu_0 \rangle + \lambda_0 \leq 1,$$
which implies that 
$$\|\mu_0\| + |\lambda_0| \leq 1.$$

We will show next  by a standard iterative scheme, that there is a constant $T>0$ depending only on $\varphi$ and $B$, and a unique local solution $(\mu_t, \lambda_t)_{t\leq T}$ starting from $(\mu_0, \lambda_0)$. Then we will see that $\mu_t\geq 0$ for all $t\in [0,T]$.

This will be enough to prove the proposition: if we put $f=0$ and $a=1$ in \eqref{eq:auxiliary_equation_existence_proof} we get
\begin{eqnarray*}
\frac{d}{dt}\lambda_t &=& \frac{1}{2} \int_D \varphi(\om)\mathbb{1}_{\om\notin B}\K \muo \mut \muth \\
&&\quad+ (\lambda^2 + 2\lambda\la \varphi, \mu \ra) \int^\infty_0 \varom^2\muom.
\end{eqnarray*}

So, since $\mu_t \geq 0$, we deduce that $\lambda_t\geq 0$ for all $t\in [0,T]$. Next, we put $f=\varphi$ and $a=1$ to see that
\begin{eqnarray} \label{eq:expression_equal_0}
\frac{d}{dt}\la \varphi, \mu_t \ra +\lambda_t &=& \frac{1}{2}\int_D (\varphi(\om)+\varth-\varo-\vart)\\ \nonumber
&&\qquad\quad \times\K\muo\mut\muth =0
\end{eqnarray}
which is zero given that $\varphi(\o)=\o+1$. Therefore,
$$\|\mu_T\|+ |\lambda_T| \leq \la \varphi, \mu_T \ra + \lambda_T = \la \varphi, \mu_0 \ra + \lambda_0 \leq 1.$$ 
We can now start again from $(\mu_T, \lambda_T)$ at time $T$ to extend the solution to $[0,2T]$, and so on, to prove the proposition.

\medskip
We use the following norm on $\mathcal{M}_B \times \R$:
$$\|(\mu, \lambda)\| = \|\mu\| + |\lambda|.$$

Note the following estimates: there is a constant $C= C(\varphi, B)<\infty$ such that for all $\mu, \mu'\in \mathcal{M}_B$ and all $\lambda, \lambda'\in \R$
\begin{eqnarray}\label{eq:estimate1_auxiliary_process}
\|L^B(\mu, \lambda)\| &\leq & C\|(\mu, \lambda)\|^3\\
\|L^B(\mu, \lambda) - L^B(\mu',\lambda') \| &\leq& C \bigg( \|\mu-\mu'\| \lp \|\mu\|^2 +\|\mu\|\|\mu'\|+\|\mu'\|^2\rp \label{eq:estimate2_auxiliary_process}\\
&&+(|\lambda|+|\lambda'|) |\lambda-\lambda'| \|\mu \|+ |\lambda'|^2\|\mu-\mu'\| \nonumber\\ 
&&\quad + |\lambda-\lambda'|\|\mu\|^2 + |\lambda'|\lp\|\mu\| \|\mu-\mu'\| + \|\mu'\| \|\mu'-\mu\|\rp \bigg) \nonumber
\end{eqnarray}

Observe that we get these estimates because we are working on a bounded set $B$.

We turn to the iterative scheme. Set $(\mu^0_t, \lambda^0_t) = (\mu_0, \lambda_0)$ for all $t$ and define inductively a sequence of continuous maps
$$t\mapsto (\mu^n_t, \lambda^n_t):\, [0,\infty)\to \mathcal{M}_B\times \R$$
by
$$(\mu_t^{n+1},\lambda^{n+1}_t)=(\mu_0, \lambda_0) + \int^t_0 L^B(\mu_s^{n},\lambda_s^{n}) \, ds.$$
Set
$$f_n(t) = \|(\mu^n_t, \lambda^n_t)\|$$
then $f_0(t)=f_n(0) = \|(\mu_0, \lambda_0) \|\leq 1$ and by the estimate \eqref{eq:estimate1_auxiliary_process} we have that
$$f_{n+1}(t) \leq 1+ C\int^t_0 f_n(s)^3\, ds.$$
Hence
$$f_n(t) \leq (1-2Ct)^{-1/2} \quad\mbox{for } t < (2C)^{-1}.$$
This last assertion is checked by induction. Suppose that it holds for $n$ then
$$f_{n+1}(t) \leq 1+ C\int^t_0 (1-2Cs)^{-3/2}\, ds = 1+ (1-2Cs)^{-1/2} |_{s=0}^{s=t}.$$
Therefore, for all $n$ setting $T= (4C)^{-1}$, we have
\begin{equation} \label{eq:estimate3_auxiliary_function}
\|(\mu^n_t, \lambda^n_t) \|\leq \sqrt{2}\qquad t\leq T.
\end{equation}
Next set $g_0(t) = f_0(t)$ and for $n\geq 1$
$$g_n(t) = \|(\mu^n_t, \lambda^n_t) - (\mu^{n-1}_t, \lambda^{n-1}_t)\|.$$
By estimates \eqref{eq:estimate2_auxiliary_process} and \eqref{eq:estimate3_auxiliary_function}, there is a constant $C=C(B, \varphi) <\infty$ such that
$$g_{n+1}(t) \leq C\int^t_0 g_n(s) \, ds \quad t\leq T.$$
Hence by the usual arguments (Gronwall, Cauchy sequence), $(\mu^n_t, \lambda^n_t)$ converges in $\mathcal{M}_B\times \R$ uniformly in $t\leq T$, to the desired local solution, which is also unique. Moreover, for some constant $C<\infty$ depending only on $\varphi$ and $B$ we have
$$\|(\mu_t, \lambda_t)\|\leq C \qquad t \leq T.$$

\medskip
Finally, we are left to check that $\mu_t\geq 0$. For this, we need the following result:

\begin{proposition} \label{prop:auxiliary_prop}
Let
$$(t,\o) \mapsto f_t(\o):[0, T]\times B \to \R$$
be a bounded measurable function, having a bounded partial derivative $\partial f/ \partial t$. Then, for all $t\leq T$,
$$
\frac{d}{dt}\la f_t, \mu_t \ra = \la \partial f/\partial t, \mu_t \ra + \la (f_t, 0), L^B(\mu_t, \lambda_t) \ra.$$
\end{proposition}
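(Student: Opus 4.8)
The plan is to first establish the integrated identity
\[
\la f_t,\mu_t\ra = \la f_0,\mu_0\ra + \int_0^t\Big(\la (\partial f/\partial t)(s,\cdot),\,\mu_s\ra + \la (f_s,0),L^B(\mu_s,\lambda_s)\ra\Big)\,ds,\qquad t\le T,
\]
and then differentiate in $t$. Since $B$ is bounded, $K$ and $\varphi$ are bounded on $B^3$ and $B$ respectively (using $\K\le\varo\vart\varth$); the map $s\mapsto(\mu_s,\lambda_s)$ is continuous, in fact Lipschitz, on $[0,T]$, being a continuous solution of \eqref{eq:auxiliary_equation_existence_proof} with $\|L^B(\mu_s,\lambda_s)\|$ bounded there, so that $M:=\sup_{s\le T}\|(\mu_s,\lambda_s)\|<\infty$; and, taking $a=0$ in the defining relation of $L^B$, the map $g\mapsto\la(g,0),L^B(\mu,\lambda)\ra$ is linear, obeys $|\la(g,0),L^B(\mu,\lambda)\ra|\le C(B,\varphi)\|g\|_\infty\|(\mu,\lambda)\|^3$, and is continuous under bounded pointwise convergence of $g$ (dominated convergence against the finite measure $\mu$ supported in $B$). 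Finally, pairing \eqref{eq:auxiliary_equation_existence_proof} with $(g,0)$ for a fixed, time-independent, bounded measurable $g$ on $B$ gives $\la g,\mu_t\ra=\la g,\mu_0\ra+\int_0^t\la(g,0),L^B(\mu_s,\lambda_s)\ra\,ds$.

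Now fix $t\le T$ and a partition $0=t_0<t_1<\dots<t_n=t$ of mesh $\delta$, and telescope:
\[
\la f_t,\mu_t\ra - \la f_0,\mu_0\ra = \sum_{i=0}^{n-1}\la f_{t_{i+1}}-f_{t_i},\,\mu_{t_{i+1}}\ra + \sum_{i=0}^{n-1}\la f_{t_i},\,\mu_{t_{i+1}}-\mu_{t_i}\ra.
\]
For the first sum: since $\partial f/\partial t$ exists and is bounded, $t\mapsto f_t(\o)$ is Lipschitz uniformly in $\o$, so $f_{t_{i+1}}(\o)-f_{t_i}(\o)=\int_{t_i}^{t_{i+1}}(\partial f/\partial t)(s,\o)\,ds$, and Fubini turns the first sum into $\int_0^t\la(\partial f/\partial t)(s,\cdot),\,\mu_{\tau_\delta^+(s)}\ra\,ds$, where $\tau_\delta^+(s)$ is the right endpoint of the partition interval containing $s$. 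As $\delta\to0$ we have $\tau_\delta^+(s)\to s$, hence $\mu_{\tau_\delta^+(s)}\to\mu_s$ in total variation, so the integrand converges to $\la(\partial f/\partial t)(s,\cdot),\mu_s\ra$ and is dominated by $\|\partial f/\partial t\|_\infty M$; dominated convergence gives $\int_0^t\la(\partial f/\partial t)(s,\cdot),\mu_s\ra\,ds$.

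For the second sum: each $f_{t_i}$ is time-independent, so applying the identity of the first paragraph at $t_{i+1}$ and $t_i$ and subtracting, $\la f_{t_i},\mu_{t_{i+1}}-\mu_{t_i}\ra=\int_{t_i}^{t_{i+1}}\la(f_{t_i},0),L^B(\mu_s,\lambda_s)\ra\,ds$, and the second sum equals $\int_0^t\la(f_{\tau_\delta^-(s)},0),L^B(\mu_s,\lambda_s)\ra\,ds$ with $\tau_\delta^-(s)$ the left endpoint of the interval containing $s$. As $\delta\to0$, $f_{\tau_\delta^-(s)}\to f_s$ uniformly in $\o$, so by the continuity of $g\mapsto\la(g,0),L^B(\mu_s,\lambda_s)\ra$ the integrand converges to $\la(f_s,0),L^B(\mu_s,\lambda_s)\ra$, dominated by $C(B,\varphi)\|f\|_\infty M^3$; dominated convergence gives $\int_0^t\la(f_s,0),L^B(\mu_s,\lambda_s)\ra\,ds$. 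Adding the two limits yields the displayed integral identity. Its integrand is continuous in $s$ (the $L^B$ term by continuity of $s\mapsto(\mu_s,\lambda_s)$ and of $s\mapsto f_s$, and $\partial f/\partial t$ being continuous in $t$ in the cases used here), so differentiation in $t$ gives the assertion; at the very least the identity holds and yields the stated derivative at every Lebesgue point.

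There is no deep obstacle here: this is the standard upgrade of a weak formulation to moving test functions. The only real work is the bookkeeping in the two dominated-convergence passages, for which the boundedness of $B$ is essential, since it is what makes $K$, $\varphi$ and $\|\mu_s\|$ all bounded so that the integrands admit integrable dominating functions, together with the routine, implicit joint measurability of $(s,\o)\mapsto(\partial f/\partial t)(s,\o)$ needed to read $\la(\partial f/\partial t)(s,\cdot),\mu_s\ra$ as a function of $s$.
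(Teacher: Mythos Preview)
Your argument is correct and is precisely the standard telescoping/dominated-convergence upgrade of the weak formulation to time-dependent test functions; the paper does not give its own proof but defers to \cite[Proposition~2.3]{norris1999smoluchowski}, whose argument is the same in outline. One small caveat: the hypothesis only asks $\partial f/\partial t$ to be bounded, not continuous, so your hedge at the end (the integrated identity yields the derivative at every Lebesgue point, hence a.e.) is the right level of precision, and the paper's ``for all $t\le T$'' should be read in that sense.
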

The proof is a straightforward adaptation of the same Proposition (with different $L^B$) in \cite[Proposition 2.3]{norris1999smoluchowski}.

For $t\leq T$, set 
$$\theta_t(\o_1) = \exp\int^t_0 \lp \int_{\R^2_+ \cap (\o_1 + \o_2\geq \o_3)} \K \mu_s(d\o_2) \mu_s(d\o_3) + \lp \lambda^2_s + 2\lambda_s \la \varphi, \mu_s\ra \rp\varo \rp ds$$
and define $G_t : \mathcal{M}_B \to \mathcal{M}_B$ by
\begin{eqnarray*}
\la f, G_t(\mu) \ra &=& \frac{1}{2} \int_D \lp (f\theta_t) (\om) \mathbb{1}_{\om\in B} + (f\theta_t)(\o_3) \rp\\
&& \qquad \times \K \theta_t(\o_1)^{-1} \theta_t(\o_2)^{-1}\theta_t(\o_3)^{-1}\\
&&\qquad \times \muo \mut\muth
\end{eqnarray*}
Note that $G_t(\mu) \geq 0$ whenever $\mu\geq 0$ and for some $C=C(\varphi, B)<\infty$ we have
\begin{eqnarray}
\|G_t(\mu)\|&\leq& C\|\mu\|^3\\
\|G_t(\mu)-G_t(\mu')\|&\leq& C \|\mu-\mu'\|\lp \|\mu\|^2 + \|\mu'\|\|\mu\|+ \|\mu'\|^2\rp.
\end{eqnarray}
Set $\tilde \mu_t = \theta_t \mu_t$. By Proposition \ref{prop:auxiliary_prop}, for all bounded measurable function $f$ we have
$$\frac{d}{dt}\la f, \tilde \mu_t \ra = \la f \frac{\partial \theta}{\partial t}, \mu_t \ra + \la (f\theta_t, 0), L^B(\mu_t, \lambda_t) \ra$$
so, using the symmetry of $\o_1$ and $\o_2$ in $L^B$ we get
\begin{eqnarray}
\frac{d}{dt}\la f, \tilde \mu_t \ra &=& \la f, G_t(\tilde \mu_t)\ra.
\end{eqnarray}
Thus, the function $\theta_t$ is simply designed as an integrating factor, which removes the negative terms appearing in $L^B$. 

Define inductively a new sequence of measures $\tilde \mu^n_t$ by setting $\tilde \mu^0_t =\mu_0$ and for $n\geq 0$
$$\tilde \mu^{n+1} = \mu_0 + \int^t_0 G_s(\tilde \mu^n_s) \, ds.$$
By an argument similar to that used for the original iterative scheme, the proof is completed: we can show, first, and possibly for a smaller value of $T>0$, but with the same dependence, that $\|\tilde \mu^n_t\|$ is bounded, uniformly in $n$, for $t\leq T$, and then that $\|\tilde \mu^n_t - \tilde \mu_t\|\to 0$ as $n\to \infty$. Since $\tilde \mu^n_t \geq 0$ for all $n$, we deduce $\tilde \mu_t\geq 0$ and hence $\mu_t \geq 0$ for all $t\leq T$. 
\end{proof}

We fix now $\mu_0 \in \mathcal{M}$ with $\mu_0\geq 0$ and $\la \varphi, \mu_0\ra <\infty$. For each bounded set $B\subset [0,\infty)$, let
\begin{equation}\label{eq:initial_data_auxiliary_process}
\mu^B_0 =\mathbb{1}_{B}\mu_0, \qquad\lambda^B_0=\int_{[0,\infty)\backslash B}\varphi(\o)\mu_0(d\o)
\end{equation}
and denote by $(\mu^B_t, \lambda^B_t)_{t\geq 0}$ the unique solution to \eqref{eq:auxiliary_equation_existence_proof}, starting from $(\mu^B_0, \lambda^B_0)$, provided by Proposition \ref{prop:existence_auxiliary_process}. We have that for $B\subset B'$, 
$$\mu^B_t \leq \mu^{B'}_t, \qquad \la \varphi, \mu^B_t \ra + \lambda^B_t = \la \varphi, \mu^{B'}_t \ra + \lambda^{B'}_t.$$
The inequality will be proven in Proposition \ref{prop:bounds_on_auxiliary_process1} and the equality  is consequence of expression \eqref{eq:expression_equal_0} and the fact that
$$\langle \varphi, \mu^B_0\ra + \lambda^B_0= \la \varphi, \mu^{B'}_0 \ra + \lambda^{B'}_0$$
by expression \eqref{eq:initial_data_auxiliary_process}.

Moreover, it holds that for any local solution $(\nu_t)_{t<T}$ of the 4-wave kinetic equation \eqref{eq:isotropic_4_wave_equation_weak}, for all $t<T$, 
\begin{equation} \label{eq:bounds_compare_sol_aux_sol}
\mu^B_t \leq \nu_t, \qquad \la \varphi, \mu^B_t\ra + \lambda^B_t \geq \la \varphi, \nu_t\ra.
\end{equation}
We prove the first inequality in Proposition  \ref{prop:bounds_on_auxiliary_process2}. The second inequality is consequence of
\begin{equation} \label{eq:bounds_auxiliary}
\la \varphi, \nu_t \ra \leq \la \varphi,\mu_0 \ra \leq \la \varphi, \mu_0 \ra + \lambda^B_0 = \la \varphi, \mub \ra + \lambb. 
\end{equation}

We now show how these facts lead to the proof of Theorem 
\ref{eq:existence_solutions_kinetic_4wave}.
\medskip
Set $\mu_t = \lim_{B \uparrow [0,\infty)}\mu^B_t$ and $\lambda_t = \lim_{B\uparrow[0,\infty)} \lambda^B_t$. Note that
$$\la \varphi, \mu_t\ra= \lim_{B\uparrow[0,\infty)}\la \varphi, \mu^B_t \ra\leq \la \varphi, \mu_0\ra <\infty.$$
So, by dominated convergence, using that $K$ is submultiplicative, for all bounded measurable functions $f$,
$$\int_D f(\om) \mathbb{1}_{\om\notin B} \K \mu^B_t(d\o_1)\mu^B_t(d\o_2)\mu^B_t(d\o_3)\to 0,$$
and we can pass to the limit in \eqref{eq:auxiliary_equation_existence_proof} to obtain
\begin{eqnarray*}
\frac{d}{dt}\la f, \mu_t \ra &=& \frac{1}{2}\int_D (f(\om)+f(\o_3)-f(\o_1)-f(\o_2))\\
&& \qquad\quad \times\K\mu_t(d\o_1)\mu_t(d\o_2)\mu_t(d\o_3)\\
&& -(\lambda^2_t+2\lambda_t\la \varphi,\mu_t\ra )\la f\varphi,\mu_t\ra .
\end{eqnarray*}
For any local solution $(\nu_t)_{t<T}$, for all $t<T$,
$$\mu_t\leq \nu_t, \qquad \la \varphi, \mu_t \ra + \lambda_t \geq \la \varphi, \nu_t \ra.$$
Hence, if $\lambda_t =0$ for all $t<T$, then $(\mu_t)_{t<T}$ is a local solution and, moreover, is the only local solution on $[0, T)$. If $(\nu_t)_{t<T}$ is a strong local solution, then
$$\int^t_0\la \varphi^2, \mu_s\ra\, ds \leq \int^t_0\la \varphi^2, \nu_s\ra\, ds <\infty$$
for all $t<T$; this allows us to pass to the limit in \eqref{eq:auxiliary_equation_existence_proof} to obtain
\begin{equation} \label{eq:lambda_to_be_zero}
\frac{d}{dt}\lambda_t = (\lambda_t^2+ 2\lambda_t\la \varphi, \mu_t\ra)  \la\varphi^2, \mu_t\rangle
\end{equation}
and to deduce from this equation that $\lambda_t=0$ for all $t<T$. It follows that $(\nu_t)_{t<T}$ is the only local solution on $[0, T)$. For any local solution $(\nu_t)_{t<T}$,
\begin{eqnarray}\label{eq:strong_solutions_is_conservative}
\int^\infty_0 \o \mathbb{1}_{\o\leq n} \nu_t (d\o) &=& \int^\infty_0 \o \mathbb{1}_{\o \leq n} \nu_0(d\o)\\\nonumber
&+& \frac{1}{2} \int^t_0 \int_D \left\{(\om) \mathbb{1}_{\om  \leq n}+\o_3\mathbb{1}_{\o_3 \leq n}-\o_1\mathbb{1}_{\o_1 \leq n}-\o_2\mathbb{1}_{\o_2 \leq n} \right\}\\\nonumber
&&\qquad \times \K \nu_s\doo \nu_s\dotw \nu_s\doth .
\end{eqnarray}
Hence, if $(\nu_t)_{t<T}$ is strong we have that
$$\int^t_0 \la \omega^2, \nu_s\ra \, ds \leq \int^t_0 \la \varphi^2, \nu_s\ra \, ds  <\infty.$$
Then, by dominated convergence, the second term on the right tends to $0$ as $n\to \infty$, showing that $(\nu_t)_{t<T}$ is conservative.

Suppose now that $\la \varphi^2, \mu_0\ra < \infty$ and set $T= \la \varphi^2, \mu_0 \ra^{-1}\la \varphi, \mu_0\ra^{-1}$. For any bounded set $B\subset [0,\infty)$, we have
\begin{eqnarray} \nonumber
\frac{d}{dt} \la \varphi^2, \mu^B_t \ra &\leq& \frac{1}{2} \int_D \left\{ \varphi(\om)^2+\varth^2-\varo^2-\vart^2 \right\}\\\nonumber
&&\qquad \times\K \mub\doo\mub\dotw\mub\doth\\ \label{eq:estimate_time_strong_sol}
&\leq & \int_D \varo^2\varphi(\o_2)^2\varth \mub\doo\mub\dotw\mub\doth\\ \nonumber
&\leq &\la \varphi, \mub \ra \la\varphi^2, \mu^B_t \ra^2\\\nonumber
& \leq &\la \varphi, \mu_0 \ra \la\varphi^2, \mu^B_t \ra^2
\end{eqnarray}
 where we used that
\begin{eqnarray*}
(\om+1)^2+ (\o_3+1)^2 - (\o_1+1)^2-(\o_2+1)^2 &=& (\tilde \o_1 + \tilde \o_2 - \tilde \o_3)^2 + \tilde{\o_3}^2 - \tilde{\o_1}^2-\tilde{\o_2}^2\\
&=& 2 \tilde{\o_1}\tilde{\o_2}+ 2\tilde{\o_3} \lp \tilde{\o_3}-\tilde{\o_1}-\tilde{\o_2}\rp\\
&\leq & 2 \tilde{\o_1}\tilde{\o_2}
\end{eqnarray*}
with $\tilde \o_i = \o_i +1$, and using that in our domain $\om\geq 0$, 
so for $t<T$
$$\la \varphi^2, \mu_t \ra \leq (S- \la \varphi, \mu_0 \ra t)^{-1}$$
where $S = \la \varphi^2, \mu_0\ra^{-1}$. Hence \eqref{eq:lambda_to_be_zero} holds and forces $\lambda_t =0$ for $t<T$ as above, so $(\mu_t)_{t<T}$ is a strong local solution.


\begin{proposition}
\label{prop:bounds_on_auxiliary_process1}
Suppose $B\subset B'$ and that $(\mu^B_t, \lambda^B_t)_{t\geq0}$, $(\mu^{B'}_t, \lambda^{B'}_t)_{t\geq 0}$ are the solutions of \eqref{eq:auxiliary_equation_existence_proof} for each one of these sets corresponding to the initial data given by \eqref{eq:initial_data_auxiliary_process}. Then for all $t\geq 0$, $\mu^B_t \leq \mu^{B'}_t$.
\end{proposition}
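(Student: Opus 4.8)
The plan is to adapt the positivity argument from the proof of Proposition~\ref{prop:existence_auxiliary_process}: rewrite both auxiliary equations by means of a \emph{common} integrating factor so that the resulting right-hand sides become order-preserving in the measure variable, and then compare the two equations through a simultaneous monotone Picard scheme. Fix a horizon $T_0<\infty$ and work on $[0,T_0]$. Recall that by \eqref{eq:initial_data_auxiliary_process} and \eqref{eq:expression_equal_0} one has $\la\varphi,\mu^B_t\ra+\lambda^B_t=\la\varphi,\mu_0\ra=:S$ for \emph{every} bounded $B$, so in particular $\la\varphi,\mu^B_t\ra\le S$; and since $B'$ is bounded, $\varphi$ and $K$ are bounded on $B'$, which provides the growth/Lipschitz estimates for the scheme as in \eqref{eq:estimate1_auxiliary_process}--\eqref{eq:estimate2_auxiliary_process}.

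Introduce the integrating factor $\theta_t(\o):=\exp\!\big(S^2\varphi(\o)\,t\big)$, which does \emph{not} depend on $B$, and set $\tilde\mu^B_t:=\theta_t\mu^B_t$. Applying Proposition~\ref{prop:auxiliary_prop} with the time-dependent test function $f\theta_t$, using the symmetry of $K$ in $(\o_1,\o_2)$ to fold the $-f(\o_1)-f(\o_2)$ terms into a diagonal loss at rate $\mathrm{coag}^B_t(\o):=\int_{\o+\o_2\ge\o_3}K(\o,\o_2,\o_3)\mu^B_t(d\o_2)\mu^B_t(d\o_3)$, and using $(\lambda^B_t)^2+2\lambda^B_t\la\varphi,\mu^B_t\ra=S^2-\la\varphi,\mu^B_t\ra^2$, one arrives at
\[
\tfrac{d}{dt}\la f,\tilde\mu^B_t\ra=\la f,G^B_t(\tilde\mu^B_t)\ra+\la f\,g_t[\tilde\mu^B_t],\,\tilde\mu^B_t\ra ,
\]
where $G^B_t$ is the ``creation'' operator obtained by dividing the three $\theta$-weights out of the $f(\om)\mathbb{1}_{\om\in B}$ and $f(\o_3)$ terms (so $\la f,G^B_t(\nu)\ra=\tfrac12\int_D(f\theta_t(\om)\mathbb{1}_{\om\in B}+f\theta_t(\o_3))\,K\,\theta_t(\o_1)^{-1}\theta_t(\o_2)^{-1}\theta_t(\o_3)^{-1}\,\nu(d\o_1)\nu(d\o_2)\nu(d\o_3)$), and $S^2\varphi(\o)$ exactly cancels the constant in the loss term, leaving the coefficient
\[
g_t[\nu](\o)=\la\varphi\theta_t^{-1},\nu\ra^{2}\varphi(\o)-\int_{\o+\o_2\ge\o_3}K(\o,\o_2,\o_3)\,\theta_t^{-1}(\o_2)\theta_t^{-1}(\o_3)\,\nu(d\o_2)\nu(d\o_3).
\]
The crucial observation is that, because $K$ is submultiplicative, $g_t[\nu](\o)=\iint h^t_\o(\o_2,\o_3)\,\nu(d\o_2)\nu(d\o_3)$ with $h^t_\o\ge0$ (split the domain into $\{\o+\o_2\ge\o_3\}$, where the integrand is $(\varphi(\o)\varphi(\o_2)\varphi(\o_3)-K)\theta_t^{-1}\theta_t^{-1}\ge0$, and its complement, where it is $\varphi(\o)\varphi(\o_2)\varphi(\o_3)\theta_t^{-1}\theta_t^{-1}\ge0$). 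Hence the full right-hand side $\Psi^B_t(\nu):=G^B_t(\nu)+g_t[\nu]\nu$ has, on the cone of nonnegative measures, the properties: (i) $\Psi^B_t(\nu)\ge0$; (ii) $0\le\nu\le\nu'\Rightarrow\Psi^B_t(\nu)\le\Psi^B_t(\nu')$ (both $G^B_t$ and $g_t[\,\cdot\,]$ being positive multilinear forms); and (iii) $B\subset B'\Rightarrow\Psi^B_t(\nu)\le\Psi^{B'}_t(\nu)$, since $\mathbb{1}_{\om\in B}\le\mathbb{1}_{\om\in B'}$.

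With this in hand I would define Picard iterates $\tilde\mu^{B,(0)}_t:=\mathbb{1}_B\mu_0$ and $\tilde\mu^{B,(n+1)}_t:=\mathbb{1}_B\mu_0+\int_0^t\Psi^B_s(\tilde\mu^{B,(n)}_s)\,ds$, and likewise for $B'$. By (i) every iterate is a nonnegative measure, so (ii)--(iii) apply; by the analogue of the estimates \eqref{eq:estimate1_auxiliary_process}--\eqref{eq:estimate2_auxiliary_process} for $\Psi^B$ (valid because $\varphi,K$ are bounded on $B'$ and $\theta_t$ is bounded on $[0,T_0]\times B'$), the scheme converges, uniformly on short time steps that are chained together to cover $[0,T_0]$, to the unique solution of the reformulated equation, which must be $\tilde\mu^B_t=\theta_t\mu^B_t$ since the latter solves it. An induction on $n$ then gives $\tilde\mu^{B,(n)}_t\le\tilde\mu^{B',(n)}_t$ for all $t,n$: the base case is $\mathbb{1}_B\mu_0\le\mathbb{1}_{B'}\mu_0$, and the inductive step chains $\mathbb{1}_B\mu_0\le\mathbb{1}_{B'}\mu_0$ with (ii) applied to $\tilde\mu^{B,(n)}_s\le\tilde\mu^{B',(n)}_s$ and then (iii). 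Letting $n\to\infty$ yields $\tilde\mu^B_t\le\tilde\mu^{B'}_t$, and dividing by $\theta_t>0$ gives $\mu^B_t\le\mu^{B'}_t$ on $[0,T_0]$; as $T_0$ is arbitrary, this proves the proposition.

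The step I expect to be the real difficulty — and the reason a direct comparison principle for \eqref{eq:auxiliary_equation_existence_proof} is not available — is that the loss term $(S^2-\la\varphi,\mu^B_t\ra^2)\varphi$ has the \emph{wrong} monotonicity: it decreases as $\mu^B_t$ grows, reflecting the fact that in the $B'$-model a tracked particle is additionally consumed by coagulating with the excess mass $\mu^{B'}-\mu^B$, so a pointwise Kamke--Müller argument fails. The reformulation above circumvents this precisely by exponentiating the \emph{crude, $B$-independent} upper bound $S^2\varphi$ for the death rate and then invoking submultiplicativity of $K$ to see that the remainder $g_t[\,\cdot\,]$ is a genuinely monotone positive operator; this is the only place where the hypothesis $K\le\varphi(\o_1)\varphi(\o_2)\varphi(\o_3)$ is used in an essential way.
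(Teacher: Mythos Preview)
Your argument is correct and takes a genuinely different route from the paper. The paper works with the \emph{difference} directly: it builds the integrating factor $\theta_t$ from the smaller solution $\mu^B$ (so the loss rate of $\mu^B$ is cancelled exactly), sets $\pi_t=\theta_t(\mu^{B'}_t-\mu^B_t)$, and after a substantial term-by-term computation shows $\tfrac{d}{dt}\pi_t=H_t(\pi_t)$ for a \emph{linear} operator $H_t$ that is positivity-preserving; the squared identity $(\la\varphi,\mu^B_t\ra+\lambda^B_t)^2=(\la\varphi,\mu^{B'}_t\ra+\lambda^{B'}_t)^2$ is used to convert the mismatch of $\lambda$-terms into the same $\la\varphi,\cdot\ra^2$ quantity you obtain, and submultiplicativity of $K$ then makes $H_t\ge0$. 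Positivity of $\pi_t$ follows from the short-time iteration for a linear, bounded, positive operator.

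You instead exploit the conservation law to close the $\mu$-equation on its own, exponentiate the crude $B$-independent upper bound $S^2\varphi$ for the death rate, and observe that submultiplicativity makes the residual diagonal term $g_t[\nu]$ a positive bilinear form---so the whole right-hand side $\Psi^B_t$ is order-preserving in $\nu$ and monotone in $B$. The comparison then falls out of a coupled Picard scheme. This is cleaner conceptually (no long formula for $H_t$) and makes the role of the hypothesis $K\le\varphi\varphi\varphi$ very transparent; the price is that you must appeal to the already-proved global existence (Proposition~\ref{prop:existence_auxiliary_process}) to get the uniform bound on $\|\tilde\mu^{B'}_t\|$ over $[0,T_0]$ that lets you chain the short contraction intervals, whereas the paper's linear equation for $\pi_t$ needs only a local Lipschitz bound. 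Both approaches use submultiplicativity at exactly the same spot---your $g_t[\nu]\ge0$ is the counterpart of the paper's terms $\int_D(\varphi\varphi\varphi-K)(\cdots)$ and $\int_{\R_+^3\setminus D}\varphi\varphi\varphi(\cdots)$ in $H_t$.
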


The proof is obtained by adapting the one in \cite[Proposition 2.4]{norris1999smoluchowski}.

\begin{proof}
Set
$$\theta_t(\o_1) = \exp\int^t_0 \lp \int_{\R^2_+ \cap (\o_1+\o_2\geq\o_3)} \K \mu^B_s(d\o_2)\mu^B_s(d\o_3) + ((\lambda^B_s)^2+2\lambda^B_s\la \varphi, \mu^B_s\ra )\varo \rp ds.$$
Denote by $\pi_t = \theta_t(\mu^{B'}_t-\mu^B_t)$. Note that $\pi_0\geq 0$.
By Proposition \ref{prop:auxiliary_prop}, for any bounded measurable function $f$,

\begin{eqnarray*}
\frac{d}{dt}\la f, \pi_t\ra &=& \la f\frac{\partial\theta_t}{\partial t}, \mu^{B'}_t-\mu^B_t\ra\\
&& + \la (f\theta_t, 0), L^{B'}(\mubp, \lambp)- L^B(\mub, \lambb)\ra\\
&=& \int_D (f\theta_t) (\o_1) \K\lp \mubp\doo\mub\dotw\mub\doth-\mub\doo\mub\dotw\mub\doth\rp\\
&+& \lp (\lambb)^2+2\lambb\la \varphi, \mub\rangle \rp \int^\infty_0 (f\theta_t)(\o_1) \varphi(\o_1)(\mubp\doo-\mub\doo)\\
&+&\frac{1}{2} \int_D (f\theta_t)(\om) \K \\
&& \quad \times \lp \mathbb{1}_{\om\in B'}\mubp\doo\mubp\dotw\mubp\doth-\mathbb{1}_{\om\in B}\mub\doo\mub\dotw\mub\doth \rp\\
&+ & \frac{1}{2}\int_D(f\theta_t)(\o_3) \K \\
&& \quad \times \lp \mubp\doo\mubp\dotw\mubp\doth-\mub\doo\mub\dotw\mub\doth \rp\\
&-& \int_D (f\theta_t)(\o_1)  \K\lp \mubp\doo\mubp\dotw\mubp\doth-\mub\doo\mub\dotw\mub\doth \rp\\
&-&\lp (\lambp)^2+ 2\lambp \la \varphi,\mubp\ra \rp \int^\infty_0 (f\theta_t)(\o_1) \varo \mubp\doo\\
&+&\lp (\lambb)^2+ 2\lambb \la \varphi,\mub\ra \rp \int^\infty_0 (f\theta_t)(\o_1) \varo \mub\doo\\
&=& I\\
&+& \int_D (f\theta_t) (\o_1) \K \lp \mubp\doo\mub\dotw\mub\doth-\mubp\doo\mubp\dotw\mubp\doth\rp\\
&+& \lp  \lp (\lambb)^2+ 2\lambb \la \varphi,\mub\ra \rp-\lp (\lambp)^2+ 2\lambp \la \varphi,\mubp\ra  \rp \rp \la f\theta_t\varphi, \mubp\ra
\end{eqnarray*}
where
\begin{eqnarray*}
I&:=&\frac{1}{2} \int_D (f\theta_t)(\om) \K \\
&& \quad \times \lp \mathbb{1}_{\om\in B'}\mubp\doo\mubp\dotw\mubp\doth-\mathbb{1}_{\om\in B}\mub\doo\mub\dotw\mub\doth \rp\\
&+ & \frac{1}{2}\int_D(f\theta_t)(\o_3) \K \\
&& \quad \times \lp \mubp\doo\mubp\dotw\mubp\doth-\mub\doo\mub\dotw\mub\doth \rp.
\end{eqnarray*}

Now, squaring the equality
$$\la \varphi, \mu^B_t \ra + \lambda^B_t = \la \varphi, \mu^{B'}_t \ra + \lambda^{B'}_t$$
 we have that
 $$\lp (\lambb)^2+ 2\lambb \la \varphi,\mub\ra \rp- (\lambp)^2-2\lambp \la \varphi,\mubp\ra  = \la \varphi, \mubp\ra^2-\la\varphi, \mub\ra^2$$
 and therefore
 \begin{eqnarray*}
 \frac{d}{dt} \la f, \pi_t\ra &=& I \\
 &+&\int_{\R^3_+\backslash D} (f\theta_t) (\o_1) \phiall\\
 && \quad\lp \mubp\doo\mubp\dotw\mubp\doth-\mubp\doo\mub\dotw\mub\doth\rp\\
&+&\int_{D} (f\theta_t) (\o_1) (\phiall-\K)\\
 && \quad\lp \mubp\doo\mubp\doth\mubp\doth-\mubp\doo\mub\dotw\mub\doth\rp.
 \end{eqnarray*}
Therefore, $\pi_t$ satisfies an equation of the form
$$\frac{d}{dt} \pi_t = H_t(\pi_t)$$
where $H_t: \mathcal{M}_{B'} \to \mathcal{M}_{B'}$ and it holds $H_t(\pi)\geq 0$ whenever $\pi\geq 0$ and where we have estimates, for $t\leq 1$,
$$\|H_t(\pi)\|\leq C\|\pi\|$$
for some constant $C<\infty$ depending only on $\varphi$ and $B'$. Therefore, we can apply the same sort of argument that we used for nonnegativity to see that $\pi_t\geq 0$ for all $t\leq 1$, and then for all $t<\infty$.

Explicitly, $H_t$ is
\begin{eqnarray*}
H_t &=& \frac{1}{2} \int_D (f\theta_t)(\om) \K \\
&& \quad \times \Big(\mathbb{1}_{\om\in B'}\theta_t^{-1}(\o_1)\pi\doo\mubp\dotw\mubp\doth \\
&& \qquad+\mathbb{1}_{\om\in B'} \mub\doo\theta_t^{-1}(\o_2)\pi\dotw\mubp\doth \\
&&\qquad+ \mathbb{1}_{\om\in B}\mub\doo\mub\dotw\theta_t^{-1}(\o_3) \pi\doth \Big)\\
&+ & \frac{1}{2}\int_D(f\theta_t)(\o_3) \K \\
&& \quad \times \Big( \theta^{-1}_t(\o_1) \pi\doo\mubp\dotw\mubp\doth + \theta^{-1}_t(\o_2)\pi\dotw\mub\doo\mubp\dotw\\
&&\qquad +\theta^{-1}_t(\o_3)\pi\doth\mub\doo\mub\dotw \Big)\\
&+&\int_{\R^3_+\backslash D} (f\theta_t) (\o_1) \phiall\\
 && \quad\times\lp \mubp\doo\theta^{-1}_t(\o_2)\pi\dotw\mubp\doth+\mubp\doo\mub\dotw\theta^{-1}_t(\o_3)\pi\doth\rp\\
&+&\int_{D} (f\theta_t) (\o_1) (\phiall-\K)\\
 && \quad\times\lp \mubp\doo\theta^{-1}_t(\o_2)\pi\dotw\mubp\doth+\mubp\doo\mub\dotw\theta^{-1}_t(\o_3)\pi\doth\rp.
\end{eqnarray*}
where we have used that 
$$\mathbb{1}_{\om\in B'}\mub\doo \mub\dotw\mubp\doth= \mathbb{1}_{\om\in B}\mub\doo\mub \dotw\mubp\doth,$$
given that if $\o_1, \o_2<b$ and $\o_3>b$ then it must hold that $\om<b$.

\end{proof}

\begin{proposition} \label{prop:bounds_on_auxiliary_process2}
Suppose that $(\nu_t)_{t<T}$ is a local solution of the 4-wave kinetic equation \eqref{eq:isotropic_4_wave_equation_weak}, starting from $\mu_0$. Then, for all bounded sets $B\subset [0,\infty)$ and all $t<T$, $\mub\leq\nu_t$.
\end{proposition}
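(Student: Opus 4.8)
The plan is to adapt the proof of Proposition~\ref{prop:bounds_on_auxiliary_process1}, comparing $\mub$ with $\nu_t$ by means of an integrating factor. Write $\sigma_t:=\nu_t-\mub$; the goal is that $\sigma_t\geq 0$ for all $t<T$. Two preliminary observations. First, $\sigma_0=\nu_0-\mu^B_0=\mu_0-\mathbb{1}_B\mu_0=\mathbb{1}_{[0,\infty)\setminus B}\,\mu_0\geq 0$, and $\mub$ is carried by $B$, so $\sigma_t$ is automatically nonnegative on $[0,\infty)\setminus B$ for every $t$; only its restriction $\sigma_t|_B$, a finite signed measure on the bounded set $B$, is genuinely at issue. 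Second, since $\nu$ is a local solution, \eqref{eq:bounds_for_varphi} together with \eqref{eq:expression_equal_0}--\eqref{eq:initial_data_auxiliary_process} give
\[
\la\varphi,\nu_t\ra\;\leq\;\la\varphi,\mu_0\ra\;=\;\la\varphi,\mub\ra+\lambb\qquad(t<T),
\]
so that $\|\nu_t\|$ and $\|\mub\|$ remain bounded on $[0,T']$ for every $T'<T$; this inequality will play the role of the exact balance relation available in Proposition~\ref{prop:bounds_on_auxiliary_process1}.

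First I would introduce, just as in Propositions~\ref{prop:existence_auxiliary_process} and~\ref{prop:bounds_on_auxiliary_process1}, the integrating factor
\[
\theta_t(\o_1)\;=\;\exp\int_0^t\Big(\int_{\R^2_+\cap(\o_1+\o_2\geq\o_3)}\K\,\mub(d\o_2)\mub(d\o_3)\;+\;\big(\lambb^2+2\lambb\la\varphi,\mub\ra\big)\varo\Big)\,ds,
\]
so that $\theta_t\geq 1$, and put $\pi_t:=\theta_t\sigma_t$, with $\pi_0=\sigma_0\geq 0$. Using the analogue of Proposition~\ref{prop:auxiliary_prop} for the local solution $\nu$ (which follows from the integral equation \eqref{eq:isotropic_4_wave_equation_weak} by the standard approximation of time-dependent test functions), I would compute $\frac{d}{dt}\la f,\pi_t\ra=\frac{d}{dt}\la f\theta_t,\sigma_t\ra$ as the sum of $\la f\,\partial_t\theta_t,\sigma_t\ra$ and the difference, tested against $f\theta_t$, of the equation for $\nu_t$ and equation \eqref{eq:auxiliary_equation_existence_proof} for $(\mub,\lambb)$, and then expand $\nu_t^{\otimes3}-(\mub)^{\otimes3}$ telescopically into three terms linear in $\sigma_t$, using the $\o_1\leftrightarrow\o_2$ symmetry of $\K$ to align the loss terms carrying $f(\o_1)$ and $f(\o_2)$.

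The aim of the computation is to reach an equation $\frac{d}{dt}\pi_t=H_t(\pi_t)+R_t$, where $R_t\geq 0$ is a known forcing --- the part into which $\sigma_t$ enters only through its restriction to $[0,\infty)\setminus B$, which there equals $\nu_t\geq 0$; since in every term of $H_t$ the measure $\pi_t$ appears multiplied by $\theta_t^{-1}\leq 1$ in the corresponding slot, and $\theta_t^{-1}(\o)\,\pi_t(d\o)=\sigma_t(d\o)$ is a finite measure, $R_t$ is finite --- while $H_t$ is a bounded linear map on $\mathcal{M}_B$ that sends the cone of nonnegative measures into itself, with $\|H_t(\pi)\|\leq C\|\pi\|$ on $[0,T']$. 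The nonnegativity of $H_t$ would be established, as in Proposition~\ref{prop:bounds_on_auxiliary_process1}, through three mechanisms: (i) $\theta_t$ cancels the loss term in which $\sigma_t$ occupies the same slot as the test function; (ii) the submultiplicativity hypothesis \eqref{eq:bounds_on_K}, $\K\leq\phiall$, converts the remaining loss terms into nonnegative ones once they are combined with the gain produced by the $\lambb$--term of $L^B$, which by the balance relation can be written
\[
\big(\lambb^2+2\lambb\la\varphi,\mub\ra\big)\la f\varphi,\mub\ra=\big(\la\varphi,\mu_0\ra^2-\la\varphi,\nu_t\ra^2\big)\la f\varphi,\mub\ra+\la\varphi,\nu_t+\mub\ra\,\la\varphi,\sigma_t\ra\,\la f\varphi,\mub\ra,
\]
the first summand being a harmless extra nonnegative term (this is the only place the inequality $\la\varphi,\nu_t\ra\leq\la\varphi,\mu_0\ra$ enters, and it goes the right way), the second being the compensating term of Proposition~\ref{prop:bounds_on_auxiliary_process1} with $\mu^{B'}$ replaced by $\nu$; and (iii) the asymmetry of the cutoffs: $\nu_t$ carries the gain $\frac12\int_D f(\om)\K\,\nu_t^{\otimes3}$ with no restriction on $\om$, whereas $\mub$ carries only $\frac12\int_D f(\om)\mathbb{1}_{\om\in B}\K\,(\mub)^{\otimes3}$, so the surplus $\frac12\int_D f(\om)\mathbb{1}_{\om\notin B}\K\,\nu_t^{\otimes3}\geq 0$ adds a further nonnegative contribution --- this is, morally, the reason why $\mub\leq\nu_t$.

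Once $\frac{d}{dt}\pi_t=H_t(\pi_t)+R_t$ is in hand, I would conclude as in the nonnegativity argument of Proposition~\ref{prop:existence_auxiliary_process}: the iteration $\pi^{0}_t=\pi_0|_B$, $\pi^{n+1}_t=\pi_0|_B+\int_0^t\big(H_s(\pi^{n}_s)+R_s\big)\,ds$ stays in the nonnegative cone and converges in $\mathcal{M}_B$, uniformly on $[0,T']$, to $\pi_t|_B$; hence $\pi_t|_B\geq 0$, so $\sigma_t\geq 0$, i.e.\ $\mub\leq\nu_t$, on $[0,T']$, and therefore for all $t<T$. The step I expect to be the main obstacle is proving that $H_t$ is positivity-preserving: this demands carrying out the telescoping of $\nu_t^{\otimes3}-(\mub)^{\otimes3}$ in all three slots simultaneously, tracking the symmetrizations from the $\o_1\leftrightarrow\o_2$ symmetry of $\K$ and both cutoffs $\mathbb{1}_{\om\in B},\mathbb{1}_{\om\notin B}$ at once, and matching every negative contribution with a piece of the $\lambb$--gain times $(\phiall-\K)\geq 0$ --- a long and delicate bookkeeping exercise, complicated by the fact that $\nu$ is not assumed compactly supported, so that $\theta_t\sigma_t$ need not be a finite measure and the argument must be localized to $B$ as above, $\theta_t\geq 1$ ensuring that the contributions from outside $B$ stay finite and nonnegative.
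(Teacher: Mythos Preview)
Your proposal is correct and follows essentially the same route as the paper: the same integrating factor $\theta_t$, the same telescoping of the trilinear difference, the same use of submultiplicativity $\K\le\phiall$ to turn the residual loss terms into nonnegative ones, and the same identity $(\lambb)^2+2\lambb\la\varphi,\mub\ra=\la\varphi,\mu_0\ra^2-\la\varphi,\mub\ra^2$ together with $\la\varphi,\nu_t\ra\le\la\varphi,\mu_0\ra$ to produce the nonnegative excess (the paper calls this $\chi_t$).

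The one simplification the paper makes that you should adopt is to localize $\nu_t$ to $B$ \emph{at the outset}: set $\nu^B_t:=\mathbb{1}_B\,\nu_t$ and $\pi_t:=\theta_t(\nu^B_t-\mub)\in\mathcal{M}_B$. Then $\pi_t$ is automatically a finite signed measure on the bounded set $B$, and the whole equation lives in $\mathcal{M}_B$ with no need for your split into a bounded operator $H_t$ plus an external forcing $R_t$; the contributions you would have placed in $R_t$ appear instead as nonnegative coefficients (involving $\nu_t$ on $B^c$) inside the affine map $\tilde H_t$. This dissolves precisely the ``main obstacle'' you anticipate: there is no issue with $\theta_t\sigma_t$ failing to be finite, and the bookkeeping reduces to that already done in Proposition~\ref{prop:bounds_on_auxiliary_process1}. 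A small correction: in your displayed compensating identity, once the $\partial_t\theta_t$ contribution is added in, the factor $\la f\varphi,\mub\ra$ becomes $\la f\theta_t\varphi,\nu^B_t\ra$; the paper's $\chi_t\la f\theta_t\varphi,\nu^B_t\ra$ is the correct form of the leftover nonnegative term.
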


The proof is obtained by adapting the one in \cite[Proposition 2.5]{norris1999smoluchowski}.

\begin{proof}
Set $\theta_t$ as in the previous Proposition and denote $\nu^B_t = \mathbb{1}_B \nu_t$ and $\pi_t = \theta_t(\nu^B_t-\mub).$
By a modification of Proposition \ref{prop:auxiliary_prop}, we have, for all bounded measurable functions $f$,
$$\frac{d}{dt}\la f, \pi_t \ra = \la f \partial \theta/ \partial t, \nu^B_t- \mub\ra + \la f\theta_t\mathbb{1}_B, Q(\nu_t)\ra - \la (f\theta_t, 0), L^B(\mub, \lambb)\ra.$$

Now, proceeding as before we have that 
\begin{eqnarray*}
	\frac{d}{dt}\la f, \pi_t \ra &=&  
 \int_D (f\theta_t) (\o_1) \K (\nu^B_t\doo\mub\dotw\mub\doth-\mub\doo\mub\dotw\mub\doth)\\
&+& \lp (\lambb)^2+2\lambb\la \varphi, \mub\rangle \rp \int^\infty_0 (f\theta_t)(\o_1) \varphi(\o_1)\nu^B_t\doo\\
&+&\frac{1}{2} \int_D (f\theta_t)(\om) \K \\
&& \quad \times  \mathbb{1}_{\om\in B}\lp\nu_t\doo\nu_t\dotw\nu_t\doth-\mub\doo\mub\dotw\mub\doth \rp\\
&+ & \frac{1}{2}\int_D(f\theta_t)(\o_3) \K \\
&& \quad \times \lp \nu_t\doo\nu_t\dotw\nu^B_t\doth-\mub\doo\mub\dotw\mub\doth \rp\\
&-& \int_D (f\theta_t)(\o_1)  \K\lp \nu^B_t\doo\nu_t\dotw\nu_t\doth-\mub\doo\mub\dotw\mub\doth \rp\\
&=&\chi_t \int^\infty_0 (f\theta_t)(\o_1) \varphi(\o_1)\nu^B_t\doo\\
&+&\frac{1}{2} \int_D (f\theta_t)(\om) \K \\
&& \quad \times  \mathbb{1}_{\om\in B}\lp\nu_t\doo\nu_t\dotw\nu_t\doth-\mub\doo\mub\dotw\mub\doth \rp\\
&+ & \frac{1}{2}\int_D(f\theta_t)(\o_3) \K \\
&& \quad \times \lp \nu_t\doo\nu_t\dotw\nu^B_t\doth-\mub\doo\mub\dotw\mub\doth \rp\\
&+& \int_{\R^3_+ \backslash D} (f\theta_t)(\o_1) \phiall \lp\nu^B_t\doo\nu_t\dotw\nu_t\doth- \nu^B_t\doo\mub\dotw\mub\doth\rp\\
&+& \int_{D}(f\theta_t)(\o_1) (\phiall-\K) \\
&& \times\qquad \lp\nu^B_t\doo\nu_t\dotw\nu_t\doth- \nu^B_t\doo\mub\dotw\mub\doth\rp
\end{eqnarray*}
where $\chi_t = (\lambb)^2+ 2\lambb \la \varphi, \mub\ra + \la \varphi, \mub \ra^2- \la \varphi, \nu_t \ra^2 \geq 0$.

Therefore, analogously as in the previous Proposition \ref{prop:bounds_on_auxiliary_process1}, we have that
$$\frac{d}{dt}(\pi_t) = \tilde H_t(\pi_t) $$
where $\tilde{H}_t: \mathcal{M}_{B} \to \mathcal{M}_{B}$ is linear and $\tilde{H}_t(\pi)\geq 0$ whenever $\pi\geq 0$. Moreover for $t\leq 1$
$$\|\tilde H_t(\pi) \|\leq C\|\pi\|$$
for some constant $C<\infty$ depending only on $\varphi$ and $B$. 
\end{proof}

	\section{Mean-field limit}
		\subsection{The instantaneous coagulation-fragmentation stochastic process}
			\label{sec:coagulation_fragmentation_stochastic_process}

Define 
$$D= \{(\o_1, \o_2, \o_3)\in \R_+^3 \, |\, \o_1 + \o_2 \geq\o_3\}.$$

We consider $X^n_0$ a probability measure on $\R_+$ written as a sum of unit masses
$$X^n_0 = \frac{1}{n}\sum_{i=1}^n \delta_{\o_i}$$
for $\o_1,\hdots, \o_n\in \R_+$. $X^n_0$ represents a system of $n$ waves labelled by their dispersion $\o_1,\hdots, \o_n$.

We define a Markov process $(X^n_t)_{t\geq 0}$ of probability measures on $\R_+$. For each triple $(\o_i,\o_j,\o_l)\in D$ of distinct particles, take an independent exponential random time $T_{ijl}$, $i<j$, with parameter 
\begin{equation} \label{eq:jump_rate_MC}
	\frac{1}{n^2}K(\o_i, \o_j, \o_l).
\end{equation}
Set $T_{ijk}=T_{jik}$ and set $T= \min_{ijl} T_{ijl}$. Then set
$$X^n_t = X^n_0 \qquad \mbox{ for } t<T$$
and 
$$X^n_T = X^n_0 + \frac{1}{n}(\delta_{\o}+\delta_{\o_l}-\delta_{\o_i}-\delta_{\o_j})$$
with $\o= \o_i+\o_j-\o_l$.
Then begin the construction afresh from $X^n_T$. 

\medskip
We call the process $(X^n_t)_{t\geq0}$ an instantaneous $n$-coagulation-fragmentation stochastic process.

\begin{remark}
Note that we should be careful not to pick the same particle twice as one particle cannot interact with itself. Suppose that $\o_i=\o_j=\o_l$ then, the Markov Chain does not make a jump. The same happens with $\o_i=\o_l$ or $\o_j=\o_l$.
Finally the case $\o_i=\o_j$ needs to be considered. For that, we define
$$\mu^{(1)}(A\times B\times C) = \mu(A)\mu(B)\mu(C) - \mu(A\cap B) \mu(C)$$
as the counting measure of triples of particles with different particles in the first and second position. Also, define
\begin{equation}  \label{eq:counting_measure}
 \mu^{(n)}(A\times B\times C) = \mu(A)\mu(B)\mu(C) -n^{-1} \mu(A\cap B) \mu(C).
 \end{equation}
 
 Note that 
 \begin{equation} \label{eq:rescaling_property_counting_measure4}
 n^3\mu^{(n)}= (n\mu)^{(1)}.
 \end{equation}
\end{remark}

\paragraph{Generator of the Markov Chain:}
For all $F\in C_b$:
$$GF(X) =\frac{n}{2}\int_{D}\left[F(X^{\o_1, \o_2, \o_3})-F(X) \right] \K X^{(n)}(d\o_1, d\o_2, d\o_3) $$
where
$$X^{\o_1, \o_2, \o_3}= X + \frac{1}{n}\lp\delta_{\o_3} + \delta_{\o_1 + \o_2-\o_3} - \delta_{\o_1} - \delta_{\o_2} \rp.$$

\medskip
\paragraph{Interpretation of the stochastic process.} Three different particles, say $\o_1$, $\o_2$, $\o_3$ interact at a random time given by the rate \eqref{eq:jump_rate_MC}. 

The outcome of the interaction is that $\o_1$ and $\o_2$ merge and then, under the presence of $\o_3$, they split, creating a new particle $\o_3$ and another one with the rest $\o= \o_1+\o_2-\o_3$. (Coagulation-fragmentation phenomena, which takes place instantaneously).

\paragraph{The martingale formulation.}
Now, for each function $f\in C_b(\R_+)$ the Markov chain can be expressed as
\begin{equation} \label{eq:martingale_formulation}
\langle f, X^n_t\rangle = \langle f, X_0^n \rangle + M^{n,f}_t + \int^t_0 \langle f, Q^{(n)}(X^n_s)\rangle\,  ds
\end{equation}
where $(M^{n,f}_t)_{t\geq 0}$ is a martingale. Note that using \eqref{eq:rescaling_property_counting_measure4} we have that
\begin{eqnarray*}
&&\langle f, Q^{(n)}(\mu) \rangle\\
&=&\frac{1}{2}\int_{D}\frac{1}{n} (f(\o_1+\o_2-\o_3)+f(\o_3)-f(\o_1) -f(\o_2)) \frac{1}{n^2} K(\o_1,\o_2,\o_3)(n\mu)^{(1)}(d\o_1,d\o_2,d\o_3)\\
&=&\frac{1}{2}\int_{D} (f(\o_1+\o_2-\o_3)+f(\o_3)-f(\o_1) -f(\o_2))  K(\o_1,\o_2,\o_3)\mu^{(n)}(d\o_1, d\o_2,d\o_3) 
\end{eqnarray*}
from this expression it is clear why we needed to rescaled the collision frequency by $n^2$.

		\subsection{First result on mean-field limit}

 We will start working in the simpler case where $K$ is bounded and see that the unbounded case will come as a `modification' of the bounded one.

\subsubsection{Mean-field limit for bounded jump kernel}

\paragraph{Uniqueness of solutions for bounded kernel}

\begin{lemma}
\label{lem:properties_trilinear_operator}
It holds that $Q$ given in \eqref{eq:isotropic_4_wave_equation_weak}
is linear in each one of its terms and the following symmetry
$$\langle f, Q(\mu, \nu, \tau) \rangle = \langle f, Q(\nu, \mu, \tau) \rangle$$
but
\begin{eqnarray*}
\langle f, Q(\mu, \nu, \tau) \rangle &\neq& \langle f, Q(\mu, \tau, \nu) \rangle\\
\langle f, Q(\mu, \nu, \tau) \rangle &\neq& \langle f, Q(\tau, \nu, \mu) \rangle.
\end{eqnarray*}
Moreover,
\begin{equation} \label{eq:trilinearQ}
Q(\mu,\mu, \mu)-Q(\nu, \nu,\nu) = Q(\mu+\nu, \mu-\nu, \mu)+ Q(\mu+\nu, \nu, \mu-\nu) +Q(\mu, \nu, \nu-\mu)
\end{equation}

\end{lemma}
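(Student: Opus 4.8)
The plan is to argue entirely from the defining integral formula for the polarised form. By the definition of $Q$ in \eqref{eq:isotropic_4_wave_equation_weak}, its trilinear extension — the first argument being integrated against $d\o_1$, the second against $d\o_2$, the third against $d\o_3$ — reads, for finite (signed) measures $\mu,\nu,\tau$ on $\R_+$ and $f\in\mathcal{B}(\R_+)$,
$$\la f, Q(\mu,\nu,\tau)\ra = \frac12 \int_D \mu(d\o_1)\,\nu(d\o_2)\,\tau(d\o_3)\, \K\, \left[f(\om)+f(\o_3)-f(\o_2)-f(\o_1)\right].$$
Linearity in each of the three slots is then immediate, since each of $\mu,\nu,\tau$ enters through a single integration against a fixed kernel; the only point to check is that the integral is absolutely convergent, which is guaranteed by the bound $\K\le\phiall$ together with finiteness of the relevant moments, exactly as in the estimates already used in Section~2.

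For the symmetry $\la f, Q(\mu,\nu,\tau)\ra = \la f, Q(\nu,\mu,\tau)\ra$ I would relabel the dummy variables $\o_1\leftrightarrow\o_2$ in the integral. Three facts make this legitimate and conclusive: the domain $D=\{\o_1+\o_2\ge\o_3\}$ is invariant under $\o_1\leftrightarrow\o_2$; the kernel satisfies $\K=K(\o_2,\o_1,\o_3)$ because $K$ is a model kernel; and the bracket $f(\om)+f(\o_3)-f(\o_2)-f(\o_1)$ is visibly invariant under $\o_1\leftrightarrow\o_2$. Hence the integrand is unchanged, while the roles of $\mu$ and $\nu$ are exchanged. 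The two displayed non-identities fail in general for the opposite reason: swapping $\o_2\leftrightarrow\o_3$ (resp.\ $\o_1\leftrightarrow\o_3$) turns the bracket into $f(\o_1+\o_3-\o_2)+f(\o_2)-f(\o_3)-f(\o_1)$ (resp.\ the analogue), which is not the original bracket, and moreover $D$ is not preserved; choosing $\mu,\nu,\tau$ to be distinct Dirac masses and $f$ an appropriate bounded function makes the strict inequality explicit, so I would record such a counterexample rather than attempt a general statement.

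Finally, for the polarisation identity \eqref{eq:trilinearQ} I would just expand the right-hand side by trilinearity: $Q(\mu+\nu,\mu-\nu,\mu)+Q(\mu+\nu,\nu,\mu-\nu)+Q(\mu,\nu,\nu-\mu)$ becomes a sum of eight terms $Q(\cdot,\cdot,\cdot)$ with arguments in $\{\mu,\nu\}$, and collecting them leaves
$$Q(\mu,\mu,\mu)-Q(\nu,\nu,\nu) + \left(Q(\nu,\mu,\mu)-Q(\mu,\nu,\mu)\right),$$
whereupon the last bracket vanishes by the symmetry in the first two slots established above. I do not expect any real obstacle here; the only care needed is careful sign- and term-bookkeeping in that eight-term expansion, and making sure the trilinear extension $Q(\mu,\nu,\tau)$ used is consistently the one attached to the $(\o_1,\o_2,\o_3)$-integrations, so that the symmetry invoked is literally the one just proven. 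All manipulations are justified by the kernel bound $\K\le\phiall$ and the moment hypotheses on $\mu_0$.
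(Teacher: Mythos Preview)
Your proposal is correct and follows essentially the same approach as the paper: the paper likewise declares linearity and the $(1,2)$-symmetry ``immediate from the definition'' and then verifies \eqref{eq:trilinearQ} by pure trilinear algebra together with that symmetry, only telescoping from the left-hand side rather than expanding the right-hand side as you do. One minor bookkeeping slip: the expansion of the right-hand side yields $4+4+2=10$ terms, not eight, but after cancellation you do indeed arrive at $Q(\mu,\mu,\mu)-Q(\nu,\nu,\nu)+\bigl(Q(\nu,\mu,\mu)-Q(\mu,\nu,\mu)\bigr)$ as claimed.
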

\begin{proof}
The first part of the statement is immediate from the definition. The second part will make use of this symmetry property along with the linearity in each component:
\begin{eqnarray*}
Q(\mu, \mu, \mu) - Q(\nu, \nu, \nu) &=&Q(\mu, \mu, \mu) + Q(\nu, \mu,\mu) - Q(\mu, \nu, \mu) + Q(\mu, \nu, \nu) \\
&&- Q(\mu,\nu, \nu)- Q(\nu, \nu, \nu)\\
&=& Q(\mu+\nu, \mu, \mu) + Q(\mu, \nu, \nu-\mu)- Q(\mu+\nu, \nu, \nu)\\
&=& Q(\mu+\nu, \mu, \mu) - Q(\mu+\nu, \nu,\mu) + Q(\mu+\nu, \nu,\mu)- Q(\mu+\nu, \nu, \nu)\\
&&\quad+\, Q(\mu, \nu, \nu-\mu)\\
&=& Q(\mu+\nu, \mu-\nu, \mu) + Q(\mu+\nu, \nu, \mu-\nu) + Q(\mu, \nu, \nu-\mu)
\end{eqnarray*}
\end{proof}

\begin{proposition}[Uniqueness of solutions]
\label{prop:uniqueness_solutions_kinetic_wave}
Suppose that the jump kernel in \eqref{eq:isotropic_4_wave_equation_weak} is bounded by $\Lambda$. Then for any given initial data, if there exists a solution for \eqref{eq:isotropic_4_wave_equation_weak}, then the solution is unique.
\end{proposition}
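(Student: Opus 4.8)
The plan is to estimate the difference of two solutions in total variation norm and close a Gronwall argument. Suppose $(\mu_t)_{t\ge 0}$ and $(\nu_t)_{t\ge 0}$ are two solutions of \eqref{eq:isotropic_4_wave_equation_weak} starting from the same initial data $\mu_0$. Since the kernel is bounded by $\Lambda$ and the test functions are bounded with bounded support, both $Q(\mu_s,\mu_s,\mu_s)$ and $Q(\nu_s,\nu_s,\nu_s)$ are finite measures; indeed, because $\varphi(\o)=\o+1$ controls mass and energy and $\la\varphi,\mu_t\ra\le\la\varphi,\mu_0\ra$ for a solution, the total masses $\|\mu_t\|$ and $\|\nu_t\|$ stay bounded by $\la\varphi,\mu_0\ra=:\Lambda_0$ on any finite time interval. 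First I would write, for any bounded measurable $f$ with $\|f\|_\infty\le 1$,
$$
\la f, \mu_t - \nu_t\ra = \int_0^t \la f, Q(\mu_s,\mu_s,\mu_s) - Q(\nu_s,\nu_s,\nu_s)\ra\, ds,
$$
and then invoke the trilinear identity \eqref{eq:trilinearQ} from Lemma \ref{lem:properties_trilinear_operator} to decompose the integrand as
$$
Q(\mu_s+\nu_s, \mu_s-\nu_s, \mu_s) + Q(\mu_s+\nu_s, \nu_s, \mu_s-\nu_s) + Q(\mu_s, \nu_s, \nu_s-\mu_s).
$$

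Next I would bound each of these three terms. Each term is of the form $\la f, Q(\sigma_s^{(1)}, \sigma_s^{(2)}, \sigma_s^{(3)})\ra$ where one of the three arguments is the signed measure $\mu_s-\nu_s$ and the other two are among $\mu_s$, $\nu_s$, $\mu_s+\nu_s$. Writing out the definition of $Q$, the integrand in the triple integral over $D$ is bounded in absolute value by $\tfrac12\cdot 4\cdot\Lambda$ (using $|f(\om)+f(\o_3)-f(\o_1)-f(\o_2)|\le 4$ and $K\le\Lambda$), so each term is bounded by $2\Lambda\,\|\sigma_s^{(1)}\|\,\|\sigma_s^{(2)}\|\,\|\sigma_s^{(3)}\|$. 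Since $\|\mu_s+\nu_s\|\le 2\Lambda_0$ and $\|\mu_s\|,\|\nu_s\|\le\Lambda_0$, collecting the three contributions gives a bound of the form
$$
|\la f, Q(\mu_s,\mu_s,\mu_s)-Q(\nu_s,\nu_s,\nu_s)\ra| \le C(\Lambda,\Lambda_0)\,\|\mu_s-\nu_s\|
$$
with $C(\Lambda,\Lambda_0)$ a constant depending only on $\Lambda$ and $\Lambda_0$. Taking the supremum over $\|f\|_\infty\le 1$ yields $\|\mu_t-\nu_t\|\le C(\Lambda,\Lambda_0)\int_0^t\|\mu_s-\nu_s\|\,ds$, and Gronwall's lemma forces $\mu_t=\nu_t$ for all $t$.

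The main obstacle I anticipate is a bookkeeping one rather than a conceptual one: making sure the norm estimates on $Q(\sigma^{(1)},\sigma^{(2)},\sigma^{(3)})$ are genuinely controlled by the total variation norm $\|\mu_s-\nu_s\|$ uniformly in $s$ on a finite interval, which requires the a priori bound $\|\mu_s\|\le\la\varphi,\mu_0\ra$ that is built into the definition of a solution. One also has to be slightly careful that $\|\mu_t-\nu_t\|$ (the supremum of $\la f,\mu_t-\nu_t\ra$ over bounded $f$) is the relevant quantity and that the integrand $s\mapsto\|\mu_s-\nu_s\|$ is measurable and locally integrable, which follows from weak continuity of solutions and the fact that $\la f,\mu_s-\nu_s\ra$ is continuous in $s$ for each fixed $f$. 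None of the test functions considered have unbounded support, so there is no issue with the $Q$ integrals diverging; boundedness of $K$ is exactly what makes the crude $L^\infty$ bound on $f(\om)+f(\o_3)-f(\o_1)-f(\o_2)$ sufficient. This is precisely the adaptation to our trilinear setting of the uniqueness argument for bounded coagulation kernels in \cite{norris1999smoluchowski}.
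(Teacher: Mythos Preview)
Your proposal is correct and follows essentially the same route as the paper: compare two solutions in total variation, use the trilinear decomposition \eqref{eq:trilinearQ}, bound each piece using $K\le\Lambda$ and $|f(\om)+f(\o_3)-f(\o_1)-f(\o_2)|\le 4$, and close with Gronwall. The only cosmetic difference is that the paper works with solutions in $\mathcal{P}(\R_+)$, so $\|\mu_s\|=\|\nu_s\|=1$ and the constant comes out as $24\Lambda$, whereas you allow general finite measures and carry the a~priori bound $\|\mu_s\|\le\la\varphi,\mu_0\ra=\Lambda_0$ through the estimate; this is harmless and arguably cleaner.
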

\begin{proof}
Suppose that we have $\mu_t, \nu_t\in \P(\R_+)$ solutions to \eqref{eq:isotropic_4_wave_equation_weak} with the same initial data. We will compare these solutions in the total variation norm:
$$\|\mu_t-\nu_t||_{TV} = \sup_{\|f\|_\infty =1} \langle f, \mu_t-\nu_t \rangle = \sup_{\|f\|_\infty =1} \int^t_0 \langle f, \dot \mu_t - \dot \nu_t \rangle.$$
Then by expression \eqref{eq:trilinearQ} we have that
$$\dot \mu_s - \dot \nu_s = Q(\mu_s+\nu_s, \mu_s-\nu_s, \mu_s)+ Q(\mu_s+\nu_s, \nu_s, \mu_s-\nu_s) +Q(\mu_s, \nu_s, \nu_s-\mu_s).$$
Therefore, for any $f\in C_b(\R_+)$ such that $\|f\|_\infty=1$ it holds
$$|\langle f, \dot \mu_s- \dot\nu_s \rangle | \leq 24 \Lambda \|\mu_s -\nu_s \|_{TV}.$$
Finally applying Gronwall on
$$\|\mu_t-\nu_t\|_{TV} \leq 24 \Lambda \int^t_0 \|\mu_s - \nu_s\|_{TV}\, ds$$
we have that the two solutions must coincide. 
\end{proof}

\begin{remark}
Existence of solutions for this case can be proven directly using a classical argument of iterative scheme (as done previously for the unbounded case).
\end{remark}

The following theorem is an adaptation of  part of \cite[Theorem 4.1]{norris1999smoluchowski}. Much more detail is provided here than in the original reference. To give the details, the author was much guided by an unpublished report \cite{martinccareport} that studied the homogoneous Boltzmann equation with bounded kernels.

\begin{theorem}[Mean-field limit for bounded jump kernel]
\label{th:hydrodynamic_limit}
Suppose that for a given measure $\mu_0$ it holds that
$$\la \o, X^n_0\ra \leq \la \o, \mu_0\ra$$
and that as $n\rightarrow \infty$
$$X^n_0 \rightarrow \mu_0 \qquad \mbox{weakly}$$
Assume that the kernel is uniformly bounded
$$K \leq \ \Lambda < \infty.$$

Then the sequence $(X^n)_{t\geq 0}$ converges as $n\rightarrow \infty$ in probability in $D([0,\infty) \times \P(\R_+))$. Its limit, $(\mu_t)_{t\geq 0}$ is continuous and it satisfies the isotropic  4-wave kinetic equation \eqref{eq:isotropic_4_wave_equation_weak}. In particular, for all $f\in C_b(\R_+)$
\begin{eqnarray*}
\sup_{s\leq t }\la f, X^n_t \ra & \to & \la f, \mu_t\ra,  \\
\sup_{s\leq t }|M_s^{f,n}| &\to & 0,\\
\sup_{s\leq t }\int^t_0 \la f, Q^{(n)}(X^n_s)\ra\, ds &\to & \int^t_0 \la f, Q(\mu_s) \ra\, ds 
\end{eqnarray*}
all in probability. As a consequence, equation \eqref{eq:isotropic_4_wave_equation_weak} is obtained as the limit in probability of \eqref{eq:martingale_formulation} as $n\rightarrow \infty$.

\end{theorem}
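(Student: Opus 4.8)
The plan is to follow the classical martingale/tightness strategy for mean-field limits, here in the simplified setting of a bounded kernel. First I would establish tightness of the sequence of laws of $(X^n_t)_{t\geq 0}$ in $D([0,\infty),\mathcal{P}(\R_+))$. Because $\mathcal{P}(\R_+)$ is not compact I would work with a fixed countable separating family $\{f_k\}\subset C_b(\R_+)$ and show tightness of each real-valued process $(\la f_k, X^n_t\ra)_{t\geq 0}$ via Aldous' criterion: the jumps of $\la f_k, X^n\ra$ are of size $O(1/n)$, and from the martingale formulation \eqref{eq:martingale_formulation} the drift $\la f_k, Q^{(n)}(X^n_s)\ra$ is bounded in absolute value by $C\Lambda\|f_k\|_\infty$ uniformly in $n$ and $s$ (using $\|X^n_s\|=1$ and the structure of $\mu^{(n)}$), so increments over a small time interval are uniformly small. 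Combined with the uniform moment bound $\la\o,X^n_t\ra\leq\la\o,\mu_0\ra$ (which holds for all $t$ because an interaction $[\o_1,\o_2,\o_3]\mapsto[\om,\o_3,\o_3]$ conserves $\sum\o_i$, so $\la\o,X^n\ra$ is in fact constant along trajectories), this moment bound gives tightness of the one-dimensional marginals in $\mathcal{P}(\R_+)$ (no escape of mass to infinity) and hence, by the standard argument, tightness in the Skorokhod space.

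Next I would pass to the limit. Along a subsequence, $(X^n)\Rightarrow(\mu_t)_{t\geq 0}$ in $D([0,\infty),\mathcal{P}(\R_+))$; I would argue the limit is continuous (jumps vanish as $1/n\to 0$). For the martingale term, I would compute the predictable quadratic variation $\la M^{n,f}\ra_t$, which is $O(1/n)$ times a bounded quantity (each jump contributes $O(1/n^2)$ and the jump rate is $O(n)$), so $\E\big[\sup_{s\leq t}|M^{n,f}_s|^2\big]\leq C_{f,t}/n\to 0$ by Doob's inequality; hence $\sup_{s\leq t}|M^{n,f}_s|\to 0$ in probability. For the drift term I would use that $\la f, Q^{(n)}(\mu)\ra$ differs from $\la f, Q(\mu)\ra$ by the $n^{-1}\mu(A\cap B)\mu(C)$ correction term in $\mu^{(n)}$, which is $O(1/n)$ uniformly for probability measures with bounded kernel, and that $\mu\mapsto\la f, Q(\mu,\mu,\mu)\ra$ is continuous for the weak topology when $K$ is bounded and continuous and $f\in C_b$ — here one must be slightly careful because the test functions $f(\om)$ and indicators are composed with the continuous map $(\o_1,\o_2,\o_3)\mapsto\om$ on $D$, but $K$ bounded continuous and the integrand bounded continuous on $D$ make $Q$ weakly continuous via the continuous mapping theorem applied to $X^n_s\otimes X^n_s\otimes X^n_s$. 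Passing to the limit in \eqref{eq:martingale_formulation} along the subsequence then yields that $(\mu_t)$ almost surely satisfies \eqref{eq:isotropic_4_wave_equation_weak}.

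To upgrade from subsequential convergence to convergence of the full sequence in probability, I would invoke uniqueness: by Proposition \ref{prop:uniqueness_solutions_kinetic_wave} the solution of \eqref{eq:isotropic_4_wave_equation_weak} with initial datum $\mu_0$ is unique, so every subsequential limit law is the Dirac mass at this deterministic solution $(\mu_t)$; by the standard subsequence argument this gives $X^n\to\mu$ in probability in the Skorokhod topology, hence the three displayed convergences (for $\la f,X^n_t\ra$, $M^{f,n}$, and the drift) follow, uniformly on compacts in $t$ and in probability. The uniform-in-$s$ statements come from the fact that the limit $t\mapsto\mu_t$ is continuous, so Skorokhod convergence to a continuous limit implies locally uniform convergence.

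\textbf{Main obstacle.} The main difficulty is the tightness/compactness step in the non-locally-compact space $\mathcal{P}(\R_+)$: one must rule out mass escaping to infinity, which is where the conserved quantity $\la\o,X^n_t\ra\leq\la\o,\mu_0\ra$ is essential, and one must verify Aldous' condition carefully using the precise form of the generator and the $O(1/n)$ jump sizes. The weak continuity of $Q$ in the limit passage is a secondary technical point — manageable because $K$ is assumed bounded and continuous and the test functions are in $C_b$, but it does require handling the composition with $(\o_1,\o_2,\o_3)\mapsto\om$ and the diagonal-correction term in $\mu^{(n)}$ with care.
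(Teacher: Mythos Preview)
Your proposal is correct and follows essentially the same route as the paper's proof: martingale vanishing via Doob's inequality on the $O(1/n)$ predictable quadratic variation, tightness of the real-valued processes $\la f, X^n\ra$ lifted to tightness in $D([0,\infty),\mathcal{P}(\R_+))$ using the conserved first moment as the compact-containment condition, continuity of the limit from $O(1/n)$ jump sizes, passage to the limit in the drift by separating the $O(1/n)$ diagonal correction in $Q^{(n)}$ from the weak continuity of $Q$, and finally uniqueness (Proposition~\ref{prop:uniqueness_solutions_kinetic_wave}) to upgrade subsequential weak convergence to convergence in probability of the full sequence. The only cosmetic differences are that the paper invokes the Ethier--Kurtz modulus-of-continuity criterion and Jakubowski's theorem explicitly where you speak of Aldous' criterion and a countable separating family; these are equivalent tools here.
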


\begin{corollary}[Existence of solutions for the weak wave kinetic equation]
There exists a solution for \eqref{eq:isotropic_4_wave_equation_weak} (expressed as the limit of the $X^n_t$).
\end{corollary}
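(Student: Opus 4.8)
The plan is to read the corollary off directly from Theorem~\ref{th:hydrodynamic_limit}, in which essentially all of the work --- tightness of $(X^n)$, identification of any weak limit as a solution, and convergence of the martingale and drift terms --- has already been done; what remains is only to exhibit, for the prescribed $\mu_0$, an admissible sequence of empirical initial configurations to which that theorem applies.

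First I would fix $\mu_0\in\P(\R_+)$ (with finite energy $\la\o,\mu_0\ra<\infty$; if $\la\o,\mu_0\ra=+\infty$ the energy hypothesis below is automatic) and build the approximating initial data by quantile discretisation. Let $F(x)=\mu_0([0,x])$, let $F^{-1}(u)=\inf\{x:\,F(x)\ge u\}$ be the left-continuous generalised inverse with the convention $F^{-1}(0):=0$, and set
$$\o^n_i:=F^{-1}\big(\tfrac{i-1}{n}\big)\quad(i=1,\dots,n),\qquad X^n_0:=\frac1n\sum_{i=1}^n\delta_{\o^n_i}.$$
Since $F^{-1}$ is non-decreasing, $\tfrac1n F^{-1}(\tfrac{i-1}{n})\le\int_{(i-1)/n}^{i/n}F^{-1}(u)\,du$ for each $i$, and summing gives $\la\o,X^n_0\ra\le\int_0^1 F^{-1}(u)\,du=\la\o,\mu_0\ra$, which is the energy hypothesis of Theorem~\ref{th:hydrodynamic_limit}. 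Moreover, for any $f\in C_b(\R_+)$ the quantity $\la f,X^n_0\ra=\tfrac1n\sum_{i=1}^n f\big(F^{-1}(\tfrac{i-1}{n})\big)$ is a left Riemann sum for $\int_0^1 f(F^{-1}(u))\,du$; since $F^{-1}$ is monotone its set of discontinuities is Lebesgue-null and $f\circ F^{-1}$ is Riemann integrable, so this converges to $\int_0^1 f(F^{-1}(u))\,du=\la f,\mu_0\ra$, i.e.\ $X^n_0\to\mu_0$ weakly. (If one prefers equality in the energy bound, the stratified conditional means $\o^n_i=n\int_{(i-1)/n}^{i/n}F^{-1}(u)\,du$ do the job just as well.)

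Then I would apply Theorem~\ref{th:hydrodynamic_limit} with this $X^n_0$ and with $K$ bounded by $\Lambda$: the processes $(X^n_t)_{t\ge0}$ converge in probability in $D\big([0,\infty),\P(\R_+)\big)$ to a continuous $\P(\R_+)$-valued limit $(\mu_t)_{t\ge0}$ satisfying \eqref{eq:isotropic_4_wave_equation_weak} tested against $C_b(\R_+)$. To see that $(\mu_t)_{t\ge0}$ is a solution in the sense of Definition~\ref{def:solutions} it only remains to check $\la\o,\mu_t\ra\le\la\o,\mu_0\ra$: every jump of $(X^n_t)$ replaces the pair $(\o_1,\o_2)$ by $(\o_1+\o_2-\o_3,\o_3)$, leaving $\sum\o$ unchanged, so $\la\o,X^n_t\ra=\la\o,X^n_0\ra\le\la\o,\mu_0\ra$ for every $t$, and weak lower semicontinuity of $\mu\mapsto\la\o,\mu\ra$ transfers the bound to $(\mu_t)$; extending the class of test functions from $C_b(\R_+)$ to $\mathcal{B}(\R_+)$ is then routine since $\la\o,\mu_t\ra<\infty$ controls the tails. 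This yields the asserted solution, realised as the limit of the $X^n_t$.

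I do not expect a real obstacle: the content is entirely in Theorem~\ref{th:hydrodynamic_limit}, and the two remaining points --- producing a sequence $X^n_0$ that is at once weakly convergent and energy-admissible, and checking that the limit meets Definition~\ref{def:solutions} --- are handled respectively by the quantile construction and by exact energy conservation of the particle system combined with lower semicontinuity. The only place where genuine extra care would be needed is if one wanted existence from a $\mu_0$ with $\la\o,\mu_0\ra=+\infty$ together with a nontrivial energy identity; that would call for first truncating $\mu_0$, solving for each truncation, and passing to the limit with an additional compactness/stability argument, which goes beyond the scope of Theorem~\ref{th:hydrodynamic_limit}.
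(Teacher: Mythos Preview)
Your proposal is correct and follows essentially the same approach as the paper: apply Theorem~\ref{th:hydrodynamic_limit} and then verify the energy inequality $\la\o,\mu_t\ra\le\la\o,\mu_0\ra$ required by Definition~\ref{def:solutions}. The paper's proof is terser---it takes the hypotheses of Theorem~\ref{th:hydrodynamic_limit} (existence of an admissible $X^n_0$) as given and checks the energy bound via the truncation $\o\mathbb{1}_{\o\le k}$, which is exactly your lower-semicontinuity argument written out; your quantile construction of $X^n_0$ is a welcome addition that the paper leaves implicit.
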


\begin{proof}
We have that the limit $(\mu_t)_{t\geq 0}$ satisfies $\la \o, \mu_t \ra \leq \la \o, \mu_0\ra$ by the following
$$\la \o \mathbb{1}_{\o\leq k}, \mu \ra = \lim_{n\to \infty}\la \o\mathbb{1}_{\o\leq k}, X^n_t \ra $$
and we have that
$$\la \o\mathbb{1}_{\o\leq k}, X^n_t \ra  \leq\la \o, X^n_t \ra  \leq \la \o, \mu_0\ra.$$
So by making $k\to \infty$ we get the bound.
\end{proof}

\subsubsection{Proof of Theorem \ref{th:hydrodynamic_limit}}
We want to take the limit in the martingale formulation \eqref{eq:martingale_formulation}. For that we will follow the following steps in \cite{norris1999smoluchowski}:
\begin{enumerate}
	\item The martingale $(M^{n, f})_{n\in\NN}$ converges uniformly in time for bounded sets to zero 
	$$\sup_{0 \leq s\leq t} |M^{n, f}_s| \rightarrow 0 \qquad \mbox{in probability}$$
	(Proposition \ref{prop:convergence_martingale}).
	\item Up to a  subsequence $(X^n_t)_{n\in\NN}$ converges weakly as $n\rightarrow \infty$ in $D([0,\infty)\times \P(\R_+)$ (Proposition \ref{prop:almost_sure_convergence_for_measures}). This will be split in three steps:
		\begin{enumerate}
			\item We will prove that the laws of the sequence $(\langle f, X^n_t \rangle)_{n\in\NN}$ are tight in $D([0,\infty), \R)$ (Lemma \ref{lem:tightness_for_the_action}).
			\item From this deduce that actually the laws of the sequence $(X^n_t)_{n\in\NN}$ itself is tight in $\mathcal{P}(D([0,\infty)\times\P(\R_+)))$ (Lemma \ref{lem:tightness_for_the_measures}).
			\item Finally use Prokhorov theorem to prove the statement.
		\end{enumerate}	
	\item Compute the limit of the trilinear term (Proposition \ref{prop:convergence_trilinear_term}). For this we will need to prove that:
		\begin{enumerate}
			\item The limit of $(X^n_t)_{t\geq 0}$ as $n\rightarrow\infty$ is uniformly in compact sets of the $t$ variable (Lemma \ref{lem:uniform_convergence_in_time}). This will be a consequence of proving that the limit itself is continuous (Lemma \ref{lem:continuity_of_limit}). 	
			\item Prove that actually in the limit we can forget about the counting measure $X^{(n)}$ and consider just the product of the three measures $X(d\o_1)X(d\o_2) X(d\o_3)$ (Lemma \ref{lem:limit_counting_measures}).
		\end{enumerate}

	\item Using the uniqueness of the wave kinetic equation, we have that all the convergent subsequences converge to the same limit. Hence the whole sequence converges; if a tight sequence has every weakly convergent subsequence converging to the same limit, then the whole sequence converges weakly to that limit (\cite{billingsley2013convergence}). 
	\item We have that the weak limit of $(X_t^n)_{n\in\NN}$ satisfies the kinetic wave equation \eqref{eq:isotropic_4_wave_equation_weak} so it is deterministic. Therefore, we actually have convergence in probability. 
		\item Finally, as an application of the functional monotone class theorem we can extend this result to functions $f\in \mathcal{B}(\R_+)$.
\end{enumerate}

\bigskip

\paragraph{Step 1: control on the martingale}
\begin{proposition}[Martingale convergence]
\label{prop:convergence_martingale}
For any $f\in C_b(\R_+)$, $t\geq 0$
$$\sup_{0 \leq s\leq t} |M^{n, f}_s| \rightarrow 0\qquad \mbox{in } L^2(\R)$$
in particular, it also converges in probability.
\end{proposition}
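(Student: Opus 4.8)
The plan is to control the quadratic variation (angle-bracket process) of the martingale $(M^{n,f}_s)_{s\geq 0}$ and show it vanishes as $n\to\infty$, then invoke Doob's $L^2$-maximal inequality to get the uniform-in-time convergence. First I would recall that $M^{n,f}_s = \la f, X^n_s\ra - \la f, X^n_0\ra - \int_0^s \la f, Q^{(n)}(X^n_r)\ra\, dr$ is a martingale associated to the jump process, so by the standard theory for Markov chains its predictable quadratic variation is
\begin{equation*}
\la M^{n,f}\ra_t = \int_0^t \frac{n}{2}\int_D \left( \la f, X_r^{\o_1,\o_2,\o_3}\ra - \la f, X_r^n\ra \right)^2 \K\, (X^n_r)^{(n)}(d\o_1, d\o_2, d\o_3)\, dr.
\end{equation*}
The key observation is the size of a single jump: $\la f, X_r^{\o_1,\o_2,\o_3}\ra - \la f, X_r^n\ra = \frac{1}{n}\big(f(\o_1+\o_2-\o_3) + f(\o_3) - f(\o_1) - f(\o_2)\big)$, which is bounded in absolute value by $4\|f\|_\infty/n$. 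Hence each squared increment is at most $16\|f\|_\infty^2/n^2$.

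Next I would estimate the total jump rate. Since $K\leq \Lambda$ and $(X^n_r)^{(n)}$ has total mass at most $1$ (it is built from the probability measure $X^n_r$ via \eqref{eq:counting_measure}, so $(X^n_r)^{(n)}(D)\le \|X^n_r\|^3 \le 1$), the measure $\frac{n}{2}\K\, (X^n_r)^{(n)}(d\o_1,d\o_2,d\o_3)$ has total mass at most $\frac{n\Lambda}{2}$. Combining with the bound on squared increments,
\begin{equation*}
\la M^{n,f}\ra_t \leq t \cdot \frac{n\Lambda}{2}\cdot \frac{16\|f\|_\infty^2}{n^2} = \frac{8\Lambda \|f\|_\infty^2\, t}{n} \longrightarrow 0.
\end{equation*}
Then $\E\big[(M^{n,f}_t)^2\big] = \E\big[\la M^{n,f}\ra_t\big] \leq 8\Lambda\|f\|_\infty^2 t/n$, and by Doob's $L^2$ maximal inequality $\E\big[\sup_{s\leq t}|M^{n,f}_s|^2\big] \leq 4\,\E\big[(M^{n,f}_t)^2\big] \leq 32\Lambda\|f\|_\infty^2 t/n \to 0$, which is precisely the claimed convergence in $L^2$, and hence (by Markov's inequality) in probability.

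The main obstacle — really the only subtlety — is justifying the formula for the quadratic variation and that $M^{n,f}$ is genuinely a square-integrable martingale (not merely a local martingale); this requires checking that the jump rates do not explode on $[0,t]$, which follows because the process has only finitely many particles, bounded kernel $K\le\Lambda$, and therefore finitely many possible jumps each at bounded rate, so the number of jumps on $[0,t]$ is stochastically dominated by a Poisson variable. Once integrability is secured, everything reduces to the two elementary bounds above: the $O(1/n)$ size of each increment and the $O(n)$ total rate, whose product gives the decisive $O(1/n)$ factor. I would also remark that the same computation works verbatim for $f\in\mathcal{B}(\R_+)$ bounded measurable, not just continuous, which anticipates the final extension step in the proof of Theorem \ref{th:hydrodynamic_limit}.
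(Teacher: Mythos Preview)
Your proposal is correct and follows essentially the same approach as the paper: bound the predictable quadratic variation of $M^{n,f}$ by combining the $O(1/n)$ jump size with the $O(n)$ total rate, then apply Doob's $L^2$ maximal inequality. The paper packages this via \cite[Proposition~8.7]{darling2008differential}, arriving at the same bound $\E[\sup_{s\leq t}|M^{n,f}_s|^2]\leq Cn^{-1}\|f\|_\infty^2 t$ (the paper's constant carries an extra power of $\Lambda$, but this is immaterial).
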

\begin{proof}[Proof of Proposition \ref{prop:convergence_martingale}]
We use Proposition 8.7 in \cite{darling2008differential} that ensures that
$$\E  \left[ \sup_{s\leq T} |M^{n, f}_s|^2\right] \leq 4 \E \int^T_0 \alpha^{n,f}(\mu_s) ds$$
as long as the right hand side is finite, where
\begin{eqnarray} \label{eq:alpha_previsible}
\alpha^{n,f}(\mu_s) &=& \frac{1}{2} \int_{D}\lp \frac{1}{n} (f(\o_1+\o_2-\o_3)+f(\o_3) - f(\o_1) -f(\o_2)) \rp^2 \\
&& \qquad\times\frac{1}{n^2} K(\o_1, \o_2, \o_3) (n\mu_s)^{(1)}(d\o_1,d\o_2,d\o_3) \nonumber
\end{eqnarray}
(this statement is consequence of Doob's $L^2$ inequality).
Therefore, using \eqref{eq:rescaling_property_counting_measure4} we have that
\begin{equation} \label{eq:bound_martingale}
\E  \left[ \sup_{s\leq t} |M^{n, f}_s|^2\right]  \leq \frac{1}{n}32 \|f\|^2_{\infty} \Lambda^2 t.
\end{equation}
This implies the convergence of the supremum towards 0 in $L^2$ which implies also the convergence in probability. 
\end{proof}

\bigskip
\paragraph{Step 2: convergence for the measures}
\label{sec:step2_convergence_of_measures}

\begin{proposition}[Weak convergence for the measures]
\label{prop:almost_sure_convergence_for_measures}
There exists a weakly convergent subsequence  $(X^{n_k}_t)_{k\in\NN}$ in $D([0,\infty)\times \P(\R_+))$ as $k\rightarrow\infty$.
\end{proposition}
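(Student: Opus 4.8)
The plan is to realise Proposition~\ref{prop:almost_sure_convergence_for_measures} through the classical route: establish tightness of the laws of $(X^n)_{n\in\NN}$ in the Skorokhod space $D([0,\infty),\P(\R_+))$, and then apply Prokhorov's theorem. Since $\P(\R_+)$ equipped with the topology of weak convergence is Polish, so is $D([0,\infty),\P(\R_+))$, and there tightness is equivalent to relative compactness; hence a weakly convergent subsequence exists as soon as tightness is proven. The tightness argument itself I would split into the two sublemmas announced in the outline.

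\emph{Step (a): tightness of the scalar projections.} Fix a countable family $\{f_m\}_{m\in\NN}\subset C_b(\R_+)$ which is convergence-determining for weak convergence on $\P(\R_+)$. For each $m$ the real-valued c\`adl\`ag process $(\langle f_m, X^n_t\rangle)_{t\ge 0}$ takes values in the fixed compact interval $[-\|f_m\|_\infty,\|f_m\|_\infty]$, so the compact-containment part of Aldous's criterion is automatic, and it remains to control the oscillation over short time intervals. For this I would use the martingale decomposition \eqref{eq:martingale_formulation}: the drift $t\mapsto\int_0^t\langle f_m, Q^{(n)}(X^n_s)\rangle\,ds$ is Lipschitz in $t$ with a constant depending only on $\|f_m\|_\infty$ and $\Lambda$ (because $X^n_s$ is a probability measure and $K\le\Lambda$), while the martingale part $M^{n,f_m}$ has, by Doob's $L^2$ inequality together with the previsible-bracket bound behind \eqref{eq:bound_martingale}, increments of second moment at most $C\delta/n$ over a time interval of length $\delta$. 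Aldous's tightness criterion then yields tightness of $(\langle f_m, X^n\rangle)_n$ in $D([0,\infty),\R)$ for each $m$.

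\emph{Step (b): from scalars to measures, plus Prokhorov.} To upgrade to tightness of the measure-valued processes I need a compact-containment condition in $\P(\R_+)$, and this is where the uniform energy bound enters. By hypothesis $\langle\o, X^n_0\rangle\le\langle\o,\mu_0\rangle=:C_0<\infty$, and since every interaction $[\o_1,\o_2,\o_3]\mapsto[\om,\o_3,\o_3]$ preserves both the number of particles and the total $\o$-mass, one has $\langle\o, X^n_t\rangle\le C_0$ for all $n$ and all $t$. By Markov's inequality $X^n_t(\{\o>R\})\le C_0/R$ uniformly, so every marginal lies in $\mathcal K_{C_0}:=\{\nu\in\P(\R_+):\langle\o,\nu\rangle\le C_0\}$, which is tight and weakly closed (Fatou applied to the nonnegative continuous function $\o$), hence compact. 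Combining this compact containment with the scalar tightness of Step~(a) for all $f_m$, a standard criterion for tightness in $D([0,\infty),E)$ with $E$ Polish (Jakubowski's criterion, or the version used in \cite{norris1999smoluchowski}) gives tightness of $(X^n)_n$, and Prokhorov's theorem then extracts a weakly convergent subsequence $(X^{n_k})_k$. It is worth recording along the way that the total-variation jump of $X^n$ is at most $2/n$, so no macroscopic jumps survive in the limit, a fact that will be used when the limit is identified.

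The step I expect to be the real obstacle is Step~(b): verifying the hypotheses of the abstract tightness criterion in $\P(\R_+)$, which is not locally compact, so one cannot reduce to finitely many scalar processes without first knowing the marginals remain in a single compact subset of $\P(\R_+)$. The uniform first-moment (energy) bound is precisely what makes this reduction possible, and it is the hypothesis $\langle\o, X^n_0\rangle\le\langle\o,\mu_0\rangle$ that does the work. By contrast, Step~(a) and the final application of Prokhorov are routine once the bracket estimate of Proposition~\ref{prop:convergence_martingale} is in hand.
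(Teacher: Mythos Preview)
Your proposal is correct and follows essentially the same route as the paper: scalar tightness of $\langle f, X^n\rangle$ via the martingale/drift bounds, compact containment in $\P(\R_+)$ via the uniform energy bound $\langle \o, X^n_t\rangle\le \langle \o,\mu_0\rangle$, then Jakubowski's criterion and Prokhorov. The only cosmetic differences are that the paper uses the modulus-of-continuity criterion (Theorem~\ref{th:criteria_tightness_Skorokhod}) rather than Aldous for Step~(a), and works with all of $C_b(\R_+)$ rather than a countable determining class; also note the jump size is $4/n$ rather than $2/n$, since two particles are removed and two are created.
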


\begin{lemma} 
\label{lem:tightness_for_the_action}
The sequence of laws  of $(\langle f, X^n_t \rangle)_{n\in\NN}$ on $D([0,\infty), \R)$ is tight.			
\end{lemma}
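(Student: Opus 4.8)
The plan is to apply Aldous's tightness criterion (or the Aldous--Rebolledo criterion) to the real-valued processes $Y^n_t := \langle f, X^n_t\rangle$ on $D([0,\infty),\R)$. The criterion requires two things: first, compact containment, i.e. that for each fixed $t$ the family $(Y^n_t)_{n}$ is tight in $\R$; and second, control of the oscillations, i.e. that for every bounded sequence of stopping times $\tau_n \leq t$ and every sequence $\delta_n \downarrow 0$ one has $Y^n_{\tau_n + \delta_n} - Y^n_{\tau_n} \to 0$ in probability. Compact containment is immediate here: since $\|f\|_\infty < \infty$ and $X^n_t$ is a probability measure, $|Y^n_t| \leq \|f\|_\infty$ for all $n$ and $t$, so the family is not merely tight but uniformly bounded, hence contained in the fixed compact interval $[-\|f\|_\infty, \|f\|_\infty]$.

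For the oscillation estimate I would use the martingale decomposition \eqref{eq:martingale_formulation}: $Y^n_t = \langle f, X^n_0\rangle + M^{n,f}_t + \int_0^t \langle f, Q^{(n)}(X^n_s)\rangle\, ds$. For stopping times $\tau_n \leq t$ and increments $\delta_n$, write
\begin{equation*}
Y^n_{\tau_n+\delta_n} - Y^n_{\tau_n} = \big(M^{n,f}_{\tau_n+\delta_n} - M^{n,f}_{\tau_n}\big) + \int_{\tau_n}^{\tau_n+\delta_n} \langle f, Q^{(n)}(X^n_s)\rangle\, ds.
\end{equation*}
The martingale part is controlled by Proposition \ref{prop:convergence_martingale}: since $\sup_{s\leq t+1}|M^{n,f}_s| \to 0$ in probability (and in $L^2$, with the explicit $O(1/n)$ bound \eqref{eq:bound_martingale}), the difference of the two martingale values goes to $0$ in probability uniformly over all choices of $\tau_n, \delta_n$. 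For the drift part, I would bound the integrand uniformly: using $\K \leq \Lambda$, the definition of $Q^{(n)}$ via $\mu^{(n)}$, and the fact that $X^n_s$ is a probability measure, one gets $|\langle f, Q^{(n)}(X^n_s)\rangle| \leq C \|f\|_\infty \Lambda$ for a universal constant $C$ (the counting-measure correction $n^{-1}\mu(A\cap B)\mu(C)$ only helps). Hence the integral is bounded by $C\|f\|_\infty \Lambda\, \delta_n \to 0$ deterministically. Combining the two pieces gives the Aldous oscillation condition, and tightness follows.

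The main obstacle is somewhat mild here: it is just making sure the boundedness of $\langle f, Q^{(n)}(X^n_s)\rangle$ is genuinely uniform in $n$ and in the (random) state $X^n_s$, which relies crucially on the hypothesis $K \leq \Lambda$ and on $X^n_s$ being a probability measure (so its total mass is $1$ and the triple-product measure has total mass $\leq 1$). Everything else is a routine application of the standard tightness criterion on $D([0,\infty),\R)$; no delicate estimate on the increments of $X^n$ itself is needed, only on the scalar projections. One should also check the minor point that the criterion is applied on each compact time interval $[0,T]$ and tightness on $D([0,\infty),\R)$ then follows by the usual restriction/projective-limit argument.
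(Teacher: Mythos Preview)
Your proof is correct. Both your argument and the paper's rest on the same two ingredients: the uniform bound $|\langle f, X^n_t\rangle|\leq\|f\|_\infty$ for compact containment, and the martingale decomposition \eqref{eq:martingale_formulation} together with the $L^2$ martingale bound from Proposition~\ref{prop:convergence_martingale} and the uniform bound $|\langle f, Q^{(n)}(X^n_s)\rangle|\leq C\|f\|_\infty\Lambda$ for the drift.

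The only genuine difference is the choice of tightness criterion. The paper invokes the Ethier--Kurtz modulus-of-continuity criterion (Theorem~\ref{th:criteria_tightness_Skorokhod}) and works to obtain the $L^2$ increment estimate
\[
\E\Big[\sup_{r\in[s,t)}|\langle f, X^n_r-X^n_s\rangle|^2\Big]\leq A\Big((t-s)^2+\tfrac{t-s}{n}\Big),
\]
which it then feeds into a Markov/Jensen argument to control $w'$. You instead invoke Aldous's criterion, which lets you bypass the modulus of continuity entirely: the martingale increment is dominated pathwise by $2\sup_{s\leq T+1}|M^{n,f}_s|$, which vanishes in $L^2$ by \eqref{eq:bound_martingale}, and the drift increment is bounded by $C\|f\|_\infty\Lambda\,\delta_n$ deterministically. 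Your route is shorter and avoids the somewhat fiddly partition argument the paper uses to pass from the increment bound to the $w'$ bound; the paper's route, on the other hand, yields an explicit quantitative $L^2$ estimate on increments that could in principle be reused. Either is perfectly adequate here.
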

\begin{lemma} 
\label{lem:tightness_for_the_measures}
The laws of the sequence $(X^n_t)_{n\in\NN}$ on $D([0,\infty)\times\P(\R_+))$ is tight.
\end{lemma}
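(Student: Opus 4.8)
The plan is to upgrade the scalar tightness of Lemma~\ref{lem:tightness_for_the_action} to tightness of the $\P(\R_+)$--valued processes by invoking a standard lifting criterion for c\`adl\`ag processes with values in a Polish space (Jakubowski's tightness criterion; the line of argument follows \cite{norris1999smoluchowski}). Two ingredients are required: a compact--containment estimate for the paths, and a countable family of continuous real functionals on $\P(\R_+)$ separating its points, to each of which Lemma~\ref{lem:tightness_for_the_action} already applies.

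For the compact containment I would exploit conservation of energy along the dynamics: a jump of $X^n$ replaces the particles $\o_i,\o_j$ by $\o_i+\o_j-\o_l$ and $\o_l$, hence leaves $\la \o, X^n_t\ra$ unchanged, so that $\la \o, X^n_t\ra = \la \o, X^n_0\ra \leq \la \o, \mu_0\ra =: M_0 < \infty$ for all $t\geq 0$ and all $n$, surely. Then $\mathcal{K} := \{\mu \in \P(\R_+) : \la \o, \mu\ra \leq M_0\}$ is uniformly tight as a family of measures (Markov's inequality gives $\mu([R,\infty)) \leq M_0/R$), hence relatively compact by Prokhorov's theorem, and it is closed because $\mu \mapsto \la \o, \mu\ra = \sup_{R>0} \la \o\wedge R, \mu\ra$ is a supremum of weakly continuous functionals and therefore lower semicontinuous; so $\mathcal{K}$ is compact in $\P(\R_+)$, and the paths of every $X^n$ stay in $\mathcal{K}$, almost surely. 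For the separating family I would take a countable $\{f_k\}_{k\geq0} \subset C_b(\R_+)$ still separating the points of $\P(\R_+)$ --- for instance $f_0\equiv1$ together with $f_k(\o)=e^{-k\o}$, whose integrals against a probability measure on $\R_+$ determine it via the Hausdorff moment problem after the change of variables $x=e^{-\o}$ --- so that the evaluation maps $F_{f_k}:\mu\mapsto\la f_k,\mu\ra$ are continuous and point--separating on $\P(\R_+)$, and the laws of $(\la f_k, X^n_t\ra)_{n}$ are tight in $D([0,\infty),\R)$ for every $k$ by Lemma~\ref{lem:tightness_for_the_action}.

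With these two facts in hand, Jakubowski's criterion --- applicable since $\P(\R_+)$ is completely regular with metrizable compact subsets and carries a countable continuous point--separating family --- yields tightness of $(X^n)_n$ in $D([0,\infty),\P(\R_+))$. Alternatively, since the paths almost surely lie in the \emph{compact} metric space $\mathcal{K}$, one may instead apply the Ethier--Kurtz tightness criterion on $\mathcal{K}$: the algebra generated by $\{F_{f_k}\}_k$ and the constants is dense in $C(\mathcal{K})$ by Stone--Weierstrass, and tightness of $h\circ X^n$ for $h$ a polynomial in finitely many of the $F_{f_k}$ follows because the jumps of $\la f_k, X^n\ra$ are of size $O(1/n)$, so any weak limit has continuous paths and coordinatewise tightness in $D([0,\infty),\R)$ upgrades to joint tightness in $D([0,\infty),\R^m)$. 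The real content of the statement has already been absorbed into Lemma~\ref{lem:tightness_for_the_action}; the only genuinely delicate point here is the passage from tightness of the scalar projections to tightness of the measure--valued process, i.e. the Skorokhod--topology bookkeeping that Jakubowski's criterion (or, equivalently, the vanishing--jump observation) is precisely designed to handle.
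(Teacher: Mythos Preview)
Your proposal is correct and follows essentially the same route as the paper: Jakubowski's criterion (Theorem~\ref{th:jakubowski_criteria}) with compact containment supplied by the energy level set $\{\mu\in\P(\R_+):\la\o,\mu\ra\le C\}$ via conservation of $\la\o,X^n_t\ra$, and the separating family of linear functionals $\mu\mapsto\la f,\mu\ra$, reducing condition~(ii) to Lemma~\ref{lem:tightness_for_the_action}. The only cosmetic difference is that the paper takes the full family $\mathbb{F}=\{\la f,\cdot\ra:f\in C_b(\R_+)\}$, which is automatically closed under addition as Theorem~\ref{th:jakubowski_criteria} requires, whereas you pick a specific countable family $\{e^{-k\o}\}$; to match that hypothesis verbatim you would close your family under finite sums, after which Lemma~\ref{lem:tightness_for_the_action} still applies since such sums lie in $C_b(\R_+)$.
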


\begin{proof}[Proof of Proposition \ref{prop:almost_sure_convergence_for_measures}]
By Lemma \ref{lem:tightness_for_the_measures} we know that the laws of the sequence $(X^n_t)_{n\in\NN}$ are tight. This implies relative compactness for the sequence by  Prokhorov's theorem.
\end{proof}

\begin{proof}[Proof of Lemma \ref{lem:tightness_for_the_action}]
 We use  Theorem \ref{th:criteria_tightness_Skorokhod}. To prove the first part (i) of the Theorem we use that
$$|\la f, X^n_t \ra | = \left| \frac{1}{n}\sum^n_{i=1}f(\o^{i,n}_t)\right| \leq  \frac{1}{n} \sum_{i=1}^n|f(\o^{i, n}_t)|\leq \|f\|_\infty$$
so for all $t\geq 0$, $\la f, X^n_t \ra \in  [-\|f\|_\infty, \|f\|_\infty]$.

The second condition (ii) of the theorem will be consequence of the following inequalities:
\begin{equation} \label{eq:4bound_square_martingale}
\E \left[ \sup_{r\in[s,t)}|M_{r}^{n,f}-M_s^{n,f}|^2\right] \leq \frac{1}{n} 32 \|f\|^2_\infty \Lambda^2(t-s)
\end{equation}
 and 
\begin{equation} \label{eq:4bound_square_operator}
 \E \left[ \sup_{r\in[s,t)} \left( \int^r_s \langle f, Q^{(n)}(X^n_p) \rangle \, dp \rp^2 \right]\leq 16 \|f\|^2_\infty \Lambda^2(t-s)^2.
 \end{equation}
  which imply that
\begin{equation} \label{eq:4bound_square_measure}
\E \left[ \sup_{r\in[s,t)}|\langle f, X^n_r-X^n_s\rangle|^2\right] \leq A\lp(t-s)^2+\frac{(t-s)}{n} \rp
\end{equation}
for some $A>0$ depending only on $\|f\|_\infty$ and $\Lambda$. 

 First we use Markov's and Jensen's inequalities to get
$$\mathbb{P}(w'(\la f, X^n\ra, \delta, T) \geq \eta) \leq \frac{\mathbb{E}[w'(\la f, X^n \ra, \delta, T)]}{\eta} \leq \frac{\lp \mathbb{E}[w'(\la f, X^n\ra, \delta, T)^2] \rp^{1/2}}{\eta}.$$
($w'$ is defined in Theorem \ref{th:criteria_tightness_Skorokhod}). Now, for a given partition $\{t_i\}^n_{i=1}$,
$$\sup_{r_1, r_2\in [t_{i-1}, t_i)}|\la f, X^n_{r_1}-X^n_{r_2} \ra | \leq 2 \sup_{r\in [t_{i-1}, t_i)}|\la f, X^n_r-X^n_{t_{i-1}}\ra|.$$
Denote by $i^*$ the point where the maximum on the right hand side is attained (the number of points in each partition is always finite). Now we want to consider a partition such that $\mbox{max}_{i}|t_i-t_{i-1}|=\delta+\varepsilon$ for some $\varepsilon>0$ so
$$w'(\la f, X^n\ra, \delta, T) \leq 2 \sup_{r\in[t_{i^*-1},t_{i^*-1}+\delta+\varepsilon)}|\la f, X^n_r-X^n_{t_{i^*-1}}\ra| \quad a.s..
$$
Therefore we are just left to check that
$$\mathbb{E} \left[\sup_{r\in[s,s+\delta+\eps)}|\la f, X^n_r-X^n_s\ra|^2 \right] \leq \frac{\eta^4}{2}$$ 
which is fulfilled thanks to the bound \eqref{eq:4bound_square_measure} by taking, for example,
$$\delta = \sqrt{1+\frac{\eta^4}{2A}}-1-\varepsilon$$
for $\varepsilon$ small enough.
\end{proof}

\begin{proof}[Proof of Lemma \ref{lem:tightness_for_the_measures}]
We will use Theorem \ref{th:jakubowski_criteria} to prove this. To check condition (i), we consider the compact set $W\in \P(\R_+)$ (compact with respect to the topology induced by the weak convergence of measures) defined as
$$W:=\left\{ \tau\in \P(\R_+)\,:\, \int_{\R_+} \o\,  \tau(d\o) \leq C \right\}.$$
Consider $\lp\mathcal{L}^n\rp_{n\in\NN}$ the family of probability measures in $\mathcal{P}(D([0,\infty); W))$ which are the laws of $(X^n)_{n\in \NN}$. We have that
$$\mathcal{L}^n(D([0,\infty); W) =1 \quad \mbox{ for all } n\in \NN$$
by the conservation of the total energy and its boundedness (assumption (B1)):
$$\int_{\R_+} \o X_t^n(d\o) = \frac{1}{n}\sum_{i=0}^n \o_t^{n,i} =\frac{1}{n}\sum_{i=0}^n \o_0^{n,i} = \int_{\R_+} \o \mu_0^n(d\o) \leq C \quad \mbox{a.s..}$$

Now, to check condition (ii) we will use the family of continuous functions in $\P(\R_+)$ defined as
$$\mathbb{F}= \{ F\, :\, \mathcal{P}(\R_+) \rightarrow \R \, :\, F(\tau)= \langle f, \tau \rangle \mbox{ for some } f\in C_b(\R_+)\}.$$
This family is closed under addition since $C_b(\R_+)$ is, it is continuous in $\mathcal{P}(\R_+)$, and separates points in $\P(\R_+)$: if $F(\tau)=F(\bar \tau)$ for all $F\in \mathbb{F}$ then
$$\int_{\R_+} f(k) d(\tau-\bar \tau) (k) =0 \quad \forall f\in C_b(\R_+)$$
 hence $\tau\equiv\bar\tau$. 

So we are left with proving that for every $f\in C_b(\R_+)$ the sequence $\{\langle f, X^n \rangle\}_{n\in \NN}$ is tight. This was proven in Lemma \ref{lem:tightness_for_the_action}.
\end{proof}

\bigskip

\paragraph{Step 3: convergence for the trilinear term}
\label{sec:4convergence_trilinear_term}

\begin{proposition}[Convergence for the trilinear term]
\label{prop:convergence_trilinear_term}
It holds that
$$\int^t_0 \langle f, Q^{(n)}(X^{n_k}_s)\rangle \, ds \rightarrow \int^t_0 \langle f, Q(\mu_s, \mu_s, \mu_s)\rangle \, ds \quad \mbox{weakly.}$$
\end{proposition}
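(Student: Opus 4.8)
The plan is to pass to the limit inside the time integral along the weakly convergent subsequence $(X^{n_k})$ supplied by Proposition \ref{prop:almost_sure_convergence_for_measures}, using the boundedness of $K$ together with the continuity of the limit established in the lemmas cited in Step 3. First I would rewrite $\langle f, Q^{(n)}(X^n_s)\rangle$ using the identity derived after \eqref{eq:martingale_formulation}, namely as the integral against $\mu^{(n)}(d\o_1,d\o_2,d\o_3)$ of the bounded continuous integrand $\Phi_f(\o_1,\o_2,\o_3):=\tfrac12(f(\o_1+\o_2-\o_3)+f(\o_3)-f(\o_1)-f(\o_2))K(\o_1,\o_2,\o_3)$ on $D$. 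Since $\|f\|_\infty<\infty$ and $K\leq\Lambda$, this integrand is bounded by $2\|f\|_\infty\Lambda$, and it is continuous on $D$ because $K$ is continuous and $f\in C_b(\R_+)$.

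The key steps, in order: (1) By Lemma \ref{lem:limit_counting_measures}, in the limit we may replace the counting measure $X^{n}_s{}^{(n)}$ by the product measure $X^{n}_s(d\o_1)X^{n}_s(d\o_2)X^{n}_s(d\o_3)$; concretely $|\langle f,Q^{(n)}(X^n_s)\rangle - \tfrac12\int_D \Phi_f\, X^n_s{}^{\otimes 3}|\leq C n^{-1}\|f\|_\infty\Lambda$ by \eqref{eq:counting_measure}, which vanishes uniformly in $s$. (2) By Lemma \ref{lem:continuity_of_limit} the limit process $(\mu_s)_{s\geq0}$ is a.s. continuous, and by Lemma \ref{lem:uniform_convergence_in_time} the convergence $X^{n_k}_s\to\mu_s$ holds uniformly on compact time-intervals, a.s.\ along the subsequence (after invoking Skorokhod's representation theorem to realize the weak convergence as a.s.\ convergence on a common probability space). (3) For each fixed $s$, weak convergence $X^{n_k}_s\to\mu_s$ in $\P(\R_+)$ implies $X^{n_k}_s{}^{\otimes 3}\to\mu_s^{\otimes3}$ weakly on $\R_+^3$, hence $\int_D\Phi_f\,X^{n_k}_s{}^{\otimes3}\to\int_D\Phi_f\,\mu_s^{\otimes3}=2\langle f,Q(\mu_s,\mu_s,\mu_s)\rangle$, using that $\Phi_f$ is bounded continuous and that $D$ is closed (one checks $\mu_s^{\otimes3}(\partial D)=0$, or equivalently approximates $\mathbb{1}_D$ by continuous functions, using that $\mu_s$ has no atoms issues are irrelevant since $\Phi_f$ already carries the $\mathbb{1}_D$ cutoff through $K$ if one extends $K$ continuously, or one argues directly that the boundary $\{\o_1+\o_2=\o_3\}$ contributes a set on which the integrand is controlled). (4) Combine: the integrand $s\mapsto\langle f,Q^{(n_k)}(X^{n_k}_s)\rangle$ is uniformly bounded by $2\|f\|_\infty\Lambda + o(1)$ and converges pointwise in $s$ to $\langle f,Q(\mu_s,\mu_s,\mu_s)\rangle$, so by bounded convergence $\int_0^t\langle f,Q^{(n_k)}(X^{n_k}_s)\rangle\,ds\to\int_0^t\langle f,Q(\mu_s,\mu_s,\mu_s)\rangle\,ds$ a.s., hence weakly.

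The main obstacle I expect is step (3): controlling the indicator $\mathbb{1}_D$ under weak convergence, i.e.\ justifying $\int_D\Phi_f\,X^{n_k}_s{}^{\otimes3}\to\int_D\Phi_f\,\mu_s^{\otimes3}$ when $\Phi_f$ is only bounded measurable (not continuous) on all of $\R_+^3$ because of the sharp cutoff on the closed set $D=\{\o_1+\o_2\geq\o_3\}$. The clean way around this is to absorb the cutoff into the kernel — i.e.\ regard $K$ as continuous on $\R_+^3$ and note that the weak form only ever integrates over $D$, whose boundary $\{\o_1+\o_2=\o_3\}$ is a Lebesgue-null Lipschitz hypersurface; combined with the fact (needed anyway, and which follows from the structure of the limit equation) that $\mu_s$ places no mass configuration concentrating on this surface for a.e.\ $s$, the portmanteau theorem applies. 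Alternatively one sandwiches $\mathbb{1}_D$ between continuous functions $g_\delta^-\le\mathbb{1}_D\le g_\delta^+$ with $g_\delta^\pm\to\mathbb{1}_D$ and passes $\delta\to0$ after $n_k\to\infty$, using $\sup_s \mu_s^{\otimes3}(D^{+\delta}\setminus D)\to0$. Everything else is routine given Lemmas \ref{lem:continuity_of_limit}, \ref{lem:uniform_convergence_in_time}, and \ref{lem:limit_counting_measures}.
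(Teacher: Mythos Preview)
Your argument is essentially correct, but you have misread Lemma~\ref{lem:limit_counting_measures} and as a result you do much more work than the paper does. In the paper, Lemma~\ref{lem:limit_counting_measures} is not merely the statement that the diagonal correction in $\mu^{(n)}$ is negligible; it is the full assertion that $\sup_{s\leq t}|\langle f, Q^{(n)}(X^{n_k}_s)-Q(\mu_s)\rangle|\to 0$ weakly. Given that, the paper's proof of the Proposition is one line: uniform convergence of the integrand on $[0,t]$ immediately gives convergence of the time integral. Your steps (1)--(4) are in effect a re-derivation of Lemma~\ref{lem:limit_counting_measures} itself (its part (i) is your step (1), its part (ii) is your steps (2)--(3)), followed by a bounded-convergence argument in time instead of the uniform-in-time estimate the paper packages into the lemma. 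Both routes are valid; yours trades a uniform estimate for a pointwise-plus-dominated argument, which is fine but gains nothing here since the uniform bound $2\|f\|_\infty\Lambda$ is available anyway.

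The technical point you flag about the indicator $\mathbb{1}_D$ under weak convergence of $X^{n_k}_s{}^{\otimes 3}$ is a genuine issue, and the paper does not address it any more carefully than you do: its proof of Lemma~\ref{lem:limit_counting_measures}(ii) simply invokes weak convergence of the triple product and ``an argument analogous to Lemma~\ref{lem:uniform_convergence_in_time}'' without discussing $\partial D$. Your proposed fixes (portmanteau via $\mu_s^{\otimes 3}(\partial D)=0$, or a sandwiching argument) are the standard ways to close this, and either would serve to make both your argument and the paper's fully rigorous.
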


\begin{lemma}[Continuity of the limit]
\label{lem:continuity_of_limit}
The weak limit of $(X^{n_k}_t)_{t\geq 0}$ as $k\rightarrow\infty$  is continuous in time a.e..
\end{lemma}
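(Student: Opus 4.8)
The plan is to reduce the statement to a one–dimensional question about the real–valued paths $t\mapsto\la f,X^{n_k}_t\ra$, exploit the fact that along the approximating sequence the sizes of the jumps tend to $0$, and then transfer continuity back to the measure–valued limit through a countable determining family of test functions. First I would fix $f\in C_b(\R_+)$. Since $\tau\mapsto\la f,\tau\ra$ is continuous on $\P(\R_+)$ (with the topology of weak convergence), it induces a continuous map $D([0,\infty),\P(\R_+))\to D([0,\infty),\R)$; hence the weak convergence $X^{n_k}\to\mu$ provided by Proposition \ref{prop:almost_sure_convergence_for_measures} yields, by the continuous mapping theorem, $\la f,X^{n_k}_\cdot\ra\to\la f,\mu_\cdot\ra$ weakly in $D([0,\infty),\R)$ (equivalently one may start from the tightness established in Lemma \ref{lem:tightness_for_the_action} and identify the limit).

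Next I would estimate the jumps. By the construction of the process, at a jump time $T$ one has $X^n_T=X^n_{T-}+\frac1n(\delta_{\o_i+\o_j-\o_l}+\delta_{\o_l}-\delta_{\o_i}-\delta_{\o_j})$, so
$$\sup_{s\ge 0}\big|\la f,X^n_s\ra-\la f,X^n_{s-}\ra\big|\ \le\ \frac{4\,\|f\|_\infty}{n}\ \longrightarrow\ 0$$
deterministically as $n\to\infty$. I would then invoke the standard fact for Skorokhod space (see \cite{billingsley2013convergence}, and also \cite{darling2008differential}): if a sequence of c\`adl\`ag processes converges weakly in $D([0,\infty),\R)$ and the supremum of the jump sizes over each compact time interval converges to $0$ in probability, then the weak limit has almost surely continuous sample paths. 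Applying this with $Y^n=\la f,X^{n_k}_\cdot\ra$ shows that $t\mapsto\la f,\mu_t\ra$ is a.s.\ continuous.

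Finally I would return to the measure–valued path. Choosing a countable family $(f_j)_{j\ge 1}\subset C_b(\R_+)$ that is convergence determining for $\P(\R_+)$, a path $t\mapsto\mu_t$ is continuous in $\P(\R_+)$ exactly when each of the real paths $t\mapsto\la f_j,\mu_t\ra$ is continuous. By the previous step, for every $j$ the event that $t\mapsto\la f_j,\mu_t\ra$ fails to be continuous is $\mathbb{P}$–null, and a countable union of null sets is null; hence almost surely $t\mapsto\mu_t$ is continuous. The only ingredient that is not pure bookkeeping is the Skorokhod–space continuity criterion invoked above; once the $O(1/n)$ bound on the jumps is in hand, there is no real obstacle, the remaining care being merely in the reduction to scalar test functions and the choice of a determining class.
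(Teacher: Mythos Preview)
Your argument is correct and follows essentially the same route as the paper: bound the jump sizes by $4\|f\|_\infty/n$, invoke the standard Skorokhod-space criterion that vanishing jump sizes force a continuous limit (the paper cites Ethier--Kurtz, Theorem~\ref{th:continuity_criteria_limit_Skorokhod_space}, rather than Billingsley), and then pass from continuity of $t\mapsto\la f,\mu_t\ra$ for each $f$ to continuity of $t\mapsto\mu_t$. Your write-up is more explicit than the paper's about the continuous-mapping step and the countable determining class, but the underlying argument is the same.
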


\begin{lemma}[Uniform convergence]
\label{lem:uniform_convergence_in_time}
For all $f\in C_b(\R_+)$, it holds
$$\sup_{s\leq t}|\la f, X^{n_k}_s-\mu_s\ra|\to 0 \quad
\mbox{weakly}$$
as $k\to\infty$. 
\end{lemma}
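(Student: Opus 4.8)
The statement to prove is Lemma \ref{lem:uniform_convergence_in_time}: uniform-in-time convergence $\sup_{s\le t}|\la f, X^{n_k}_s-\mu_s\ra|\to 0$ weakly, for $f\in C_b(\R_+)$. The natural route is to combine the martingale formulation \eqref{eq:martingale_formulation} with the already-established ingredients: the martingale vanishes uniformly in time (Proposition \ref{prop:convergence_martingale}), the sequence of measure-valued processes is tight and has a weakly convergent subsequence (Proposition \ref{prop:almost_sure_convergence_for_measures}), the limit is continuous (Lemma \ref{lem:continuity_of_limit}), and the driving term $\int_0^s\la f,Q^{(n)}(X^n_p)\ra\,dp$ converges (this is essentially Proposition \ref{prop:convergence_trilinear_term}, which comes next, but for a \emph{bounded} kernel the trilinear integrand is bounded so the estimate is soft). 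The key soft fact I would invoke is that pointwise (in $s$) convergence of a sequence of \emph{nondecreasing-increment-controlled} càdlàg functions to a \emph{continuous} limit upgrades automatically to uniform convergence on compacts — an elementary Dini/Pólya-type argument.

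\medskip

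\emph{Step 1.} Pass to the weakly convergent subsequence $(X^{n_k})$ of Proposition \ref{prop:almost_sure_convergence_for_measures} and, via the Skorokhod representation theorem, realize it on a common probability space so that $X^{n_k}\to \mu$ almost surely in $D([0,\infty),\P(\R_+))$; by Lemma \ref{lem:continuity_of_limit} the limit path $t\mapsto\mu_t$ is a.s. continuous, and convergence in the Skorokhod topology to a continuous limit is in fact \emph{locally uniform}. Hence already $\sup_{s\le t}d(X^{n_k}_s,\mu_s)\to0$ a.s. in the Skorokhod-path sense. The remaining issue is that $d(X^{n_k}_s,\mu_s)\to 0$ only gives $\la f,X^{n_k}_s\ra\to\la f,\mu_s\ra$ for each fixed $s$ along the path; one must promote this to uniformity in $s$ for the fixed test function $f$.

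\medskip

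\emph{Step 2 (the main point).} Use the martingale decomposition \eqref{eq:martingale_formulation}:
\begin{equation*}
\la f,X^{n_k}_s\ra-\la f,\mu_s\ra = \la f,X^{n_k}_0\ra-\la f,\mu_0\ra + M^{n_k,f}_s + \int_0^s\la f,Q^{(n_k)}(X^{n_k}_p)\ra\,dp - \int_0^s\la f,Q(\mu_p)\ra\,dp .
\end{equation*}
The initial term vanishes by hypothesis on $X^n_0$. The martingale term obeys $\sup_{s\le t}|M^{n_k,f}_s|\to0$ in probability by Proposition \ref{prop:convergence_martingale}. For the integral terms, write the difference as $\int_0^s\la f,Q^{(n_k)}(X^{n_k}_p)-Q(X^{n_k}_p)\ra\,dp + \int_0^s\la f,Q(X^{n_k}_p)-Q(\mu_p)\ra\,dp$. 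The first piece is $O(1/n_k)$ uniformly in $s\le t$ because $Q^{(n)}-Q$ is the counting-measure correction, which carries a factor $n^{-1}$ (this is exactly Lemma \ref{lem:limit_counting_measures}, and for a bounded kernel $K\le\Lambda$ it is a one-line estimate). The second piece: since $Q(\cdot)$ is trilinear with bounded kernel, $p\mapsto\la f,Q(X^{n_k}_p)-Q(\mu_p)\ra$ is bounded by $C(\|f\|_\infty,\Lambda)$ and, by Step 1, tends to $0$ for a.e. $p$; dominated convergence gives $\sup_{s\le t}\big|\int_0^s\la f,Q(X^{n_k}_p)-Q(\mu_p)\ra\,dp\big|\le \int_0^t|\cdots|\,dp\to0$. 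Collecting these, $\sup_{s\le t}|\la f,X^{n_k}_s-\mu_s\ra|\to0$ in probability; since the extraction along the Skorokhod representation was to an a.s. limit, weak convergence of $\sup_{s\le t}|\la f,X^{n_k}_s-\mu_s\ra|$ to $0$ follows, which is the claim.

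\medskip

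\emph{Expected obstacle.} The delicate point is the interchange of "weak subsequential limit'' and "uniform in time'': one cannot directly differentiate the weak limit, so one must argue through the already-proved tightness/continuity results and the martingale identity rather than manipulating $\mu_t$ pathwise. Concretely, the care needed is (i) justifying that Skorokhod-convergence to a continuous path is locally uniform — standard, but worth stating — and (ii) applying dominated convergence to $\la f,Q(X^{n_k}_p)-Q(\mu_p)\ra$ after using the trilinear identity \eqref{eq:trilinearQ} of Lemma \ref{lem:properties_trilinear_operator} to expand $Q(X^{n_k}_p)-Q(\mu_p)$ into terms each linear in $X^{n_k}_p-\mu_p$, so that $p$-a.e. convergence is transparent. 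Everything else is bookkeeping with the bound $K\le\Lambda$.
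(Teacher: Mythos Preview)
Your Step~1 is already the paper's entire proof, and your worry at the end of it is unfounded. The paper argues exactly as you do up through $\sup_{s\le t}d(X^{n_k}_s,\mu_s)\to 0$ (Skorokhod convergence to a continuous limit is locally uniform), and then simply invokes the continuous mapping theorem: the map $\nu\mapsto\la f,\nu\ra$ is continuous on $\P(\R_+)$ and the trajectory $\{\mu_s:s\le t\}$ is compact, so uniform $d$-convergence promotes to uniform convergence of $\la f,\cdot\ra$ by a routine subsequence/compactness argument. No recourse to the martingale identity is needed.

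Your Step~2, by contrast, is circular as written. The displayed decomposition subtracts $\la f,\mu_s\ra=\la f,\mu_0\ra+\int_0^s\la f,Q(\mu_p)\ra\,dp$, i.e.\ it assumes that the weak limit $\mu$ already solves the equation. In the paper's logical order, the present lemma is a \emph{prerequisite} for Lemma~\ref{lem:limit_counting_measures} and Proposition~\ref{prop:convergence_trilinear_term}, which are precisely what establish that $\mu$ solves the equation; so you are invoking the theorem's conclusion inside the proof of one of its ingredients. The argument can be salvaged (show $\la f,X^{n_k}_s\ra$ converges uniformly to $\la f,\mu_0\ra+\int_0^s\la f,Q(\mu_p)\ra\,dp$ and then identify this with $\la f,\mu_s\ra$ via the pointwise convergence from Step~1), but once reorganised that way you have essentially redone the whole passage to the limit---considerably more than the lemma requires, when the one-line continuous-mapping step after your Step~1 already finishes the job.
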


\begin{lemma}
\label{lem:limit_counting_measures}
It holds that
$$\sup_{s\leq t} |\langle f, Q^{(n)}(X^{n_k}_s)- Q(\mu_s)\rangle| \rightarrow 0 \quad \mbox{weakly}$$
as $k\rightarrow \infty$.
\end{lemma}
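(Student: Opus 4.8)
The plan is to split
\[
\la f, Q^{(n_k)}(X^{n_k}_s)-Q(\mu_s)\ra \;=\; \la f, Q^{(n_k)}(X^{n_k}_s)-Q(X^{n_k}_s)\ra \;+\; \la f, Q(X^{n_k}_s)-Q(\mu_s)\ra
\]
and to control each term uniformly in $s\le t$. The first term is the error made by replacing the counting measure $\mu^{(n_k)}$ by the product measure: by \eqref{eq:counting_measure}, $\mu^{(n)}-\mu\otimes\mu\otimes\mu$ is $-n^{-1}$ times the ``diagonal'' ($\o_1=\o_2$) contribution, so
\[
\la f, Q^{(n_k)}(X^{n_k}_s)-Q(X^{n_k}_s)\ra = -\tfrac{1}{2n_k}\!\int_{\{2\o_1\ge\o_3\}}\!\!\big(f(2\o_1-\o_3)+f(\o_3)-2f(\o_1)\big)K(\o_1,\o_1,\o_3)\,X^{n_k}_s(d\o_1)\,X^{n_k}_s(d\o_3),
\]
whose absolute value is at most $2\|f\|_\infty\Lambda\,\|X^{n_k}_s\|^2/n_k = 2\|f\|_\infty\Lambda/n_k$ for every $s$, since $K\le\Lambda$ and $X^{n_k}_s\in\P(\R_+)$. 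Hence $\sup_{s\le t}$ of the first term tends to $0$ deterministically; this is the point behind the name of the lemma, namely that in the limit the counting measure may be replaced by the genuine triple product.

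For the second term I would use that $\mu\mapsto\la f, Q(\mu,\mu,\mu)\ra$ is continuous for the weak topology, uniformly on bounded subsets of $\mathcal{M}(\R_+)$. Write $\la f, Q(\mu)\ra=\tfrac12\la\Phi,\mu\otimes\mu\otimes\mu\ra$ with $\Phi(\o_1,\o_2,\o_3)=\mathbb{1}_{\{\o_1+\o_2\ge\o_3\}}\big(f(\o_1+\o_2-\o_3)+f(\o_3)-f(\o_1)-f(\o_2)\big)K(\o_1,\o_2,\o_3)$, bounded by $4\|f\|_\infty\Lambda$. By Lemma \ref{lem:uniform_convergence_in_time}, $\sup_{s\le t}|\la g, X^{n_k}_s-\mu_s\ra|\to0$ weakly for each $g\in C_b(\R_+)$, so $(X^{n_k}_s)^{\otimes 3}\to\mu_s^{\otimes 3}$ weakly on $\R_+^3$, uniformly in $s\le t$; here one also uses that $\{X^{n_k}_s:k\ge1,\ s\le t\}$ is uniformly tight, which follows from the energy bound $\la\o,X^{n_k}_s\ra\le\la\o,\mu_0\ra<\infty$ (the same bound used for the compact set $W$ in Lemma \ref{lem:tightness_for_the_measures}). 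Approximating the continuous extension of $\Phi$ on compacts by finite sums of products $g_1(\o_1)g_2(\o_2)g_3(\o_3)$, pushing these through the uniform weak convergence, and using tightness for the tails, one gets $\sup_{s\le t}|\la f, Q(X^{n_k}_s)-Q(\mu_s)\ra|\to0$ weakly; the trilinear identity \eqref{eq:trilinearQ} is the natural way to organize this difference, reducing the task to estimating $\la f, Q(\alpha,\beta,\gamma)\ra$ with $\beta=X^{n_k}_s-\mu_s$ weakly small and $\alpha,\gamma$ of bounded mass. Adding the two pieces gives the claim, the continuity of the limit (Lemma \ref{lem:continuity_of_limit}) being what makes the control uniform in $s\le t$.

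The main obstacle is the resonance indicator $\mathbb{1}_{\{\o_1+\o_2\ge\o_3\}}$ in $\Phi$: the integrand defining $Q$ is discontinuous across the surface $\{\o_1+\o_2=\o_3\}$, where its jump equals $K(\o_1,\o_2,\o_3)\big(f(0)+f(\o_3)-f(\o_1)-f(\o_2)\big)$ and need not vanish, so weak convergence of the measures does not automatically pass to $\la f, Q(\cdot)\ra$. Handling this cleanly requires either approximating $\mathbb{1}_{\{\o_1+\o_2\ge\o_3\}}$ from inside and from outside by continuous functions and controlling, uniformly in $k$ and in $s\le t$, the mass that $X^{n_k}_s\otimes X^{n_k}_s\otimes X^{n_k}_s$ and $\mu_s^{\otimes 3}$ place in a thin slab around that surface, or else arguing that the limiting measure $\mu_s$ does not charge it. Everything else is routine bookkeeping with the bounded-kernel estimates already used for the martingale bound and for tightness.
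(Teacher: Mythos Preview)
Your approach is essentially the paper's: the same two-term split $Q^{(n)}-Q$ on $X^{n_k}_s$ plus $Q(X^{n_k}_s)-Q(\mu_s)$, the same $O(n_k^{-1})$ bound on the diagonal correction, and the same reduction of the second term to weak convergence of the triple product $X^{n_k}_s\otimes X^{n_k}_s\otimes X^{n_k}_s\to\mu_s^{\otimes 3}$ uniformly in $s\le t$. The paper's treatment of part (ii) is in fact sketchier than yours---it simply appeals to L\'evy's continuity theorem and an argument ``analogous to Lemma \ref{lem:uniform_convergence_in_time}''---and does not explicitly confront the discontinuity of $\mathbb{1}_D$ across $\{\o_1+\o_2=\o_3\}$ that you correctly isolate as the only non-routine point.
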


\begin{proof}[Proof of Proposition \ref{prop:convergence_trilinear_term}]
By Lemma \ref{lem:limit_counting_measures} we can pass the limit inside the integral in time. 
\end{proof}

\begin{proof}[Proof of Lemma \ref{lem:continuity_of_limit}]
We have that for any $f\in C_b(\R_+)$
$$|\langle f, X^{n_k}_t \rangle- \langle f, X^{n_k}_{t-}\rangle | \leq\frac{4}{n_k}  \|f\|_{\infty}$$
 applying Theorem \ref{th:continuity_criteria_limit_Skorokhod_space} we get that $\la f, \mu_t\ra$ is continuous for any $f\in C_b(\R_+)$ and this implies the continuity of $(\mu_t)_{t\geq0}$.
\end{proof}

\begin{proof}[Proof of Lemma \ref{lem:uniform_convergence_in_time}]
We know by Lemma \ref{lem:continuity_of_limit} that the limit of $(X^{n_k})_{k\in \NN}$ is continuous. The statement is consequence of the continuity mapping theorem in the Skorokhod space (proven using the Skorokhod representation theorem \ref{th:Skorokhod_representation_theorem}) and the fact that $g(X)(t)=\sup_{s\leq t} |X|$ is a continuous function in this space.
\end{proof}

\begin{proof}[Proof of Lemma \ref{lem:limit_counting_measures}]
We abuse notation and denote by $(X^n_t)_{n\in \NN}$ the convergent subsequence.
We split the proof in two parts, we will prove for all $f\in C_b(\R_+)$:
\begin{itemize}
	\item[(i)] $\sup_{s\leq t}|\langle f, \lp Q-Q^{(n)}\rp (X^{n}_s)\rangle| \rightarrow 0$ as $n\rightarrow \infty$,
	\item[(ii)] $\sup_{s\leq t}\left|\langle f,  Q\lp X^n_s \rp -Q\lp\mu_s\rp \rangle\right| \rightarrow 0$ as $n\rightarrow \infty$. 
\end{itemize}
(i) is consequence of 
\begin{eqnarray} \nonumber
|\langle f, \lp Q-Q^{(n)}\rp (X^{n}_s)\rangle| &=& \frac{1}{2}\frac{1}{n}\int_{2\o_2\geq \o_3}\lp f(2\o_2-\o_3) + f(\o_3)-2f(\o_2)\rp \\
&&\qquad\qquad\times K(\o_2, \o_2, \o_3) X^n_s(d\o_2)X^n_s(d\o_3) \nonumber\\
&\leq&\frac{2}{n}\|f\|_{\infty}\Lambda . \label{eq:4bound_difference_trilinear_term}
\end{eqnarray}
Now, for (ii) we compute we have that
\begin{eqnarray} \nonumber
\sup_{s\leq t}\left|\langle f,  Q( X^n_s) -Q(\mu_s) \rangle\right| & \leq & \frac{1}{2} \int_{D} K(\o_1, \o_2, \o_3) \left|f(\o_1 + \o_2 -\o_3) +f(\o_3) - f(\o_1) - f(\o_2) \right| \\
&&\qquad\times \sup_{s\leq t} \left|X^n_s(d\o_1)X_s^n(d\o_2)X_s^n(d\o_3)-\mu_s(d\o_1)\mu_s(d\o_2)\mu_s(d\o_3)\right| \nonumber\\
\label{eq:4bound_trilinear_term_uniformly}
\end{eqnarray}

We conclude (ii)  with an argument analogous to Lemma \ref{lem:uniform_convergence_in_time} and the fact that
$$X^n_t \otimes X^n_t \otimes X^n_t \to \mu_t \otimes \mu_t \otimes \mu_t$$
weakly (consequence of L\'evy's continuity theorem).
\end{proof}

\subsubsection{Proof of Theorem \ref{th:mean_field_limit_unbounded_kernel} (unbounded kernel)}


\begin{remark}
The proof that we already wrote in the case of bounded kernels works here in most parts substituting $\Lambda$ by $M$ where
$$\int_{\R_+}\o X^n(d\o)\leq M= \la \o, \mu_0\ra.$$
The only places where we need to be careful are Lemmas \ref{lem:uniform_convergence_in_time} and \ref{lem:limit_counting_measures}.
\end{remark}

\begin{lemma}[Convergence of a subsequence]
There exists a subsequence $\lp X^{n_k}_t\rp_{k\in \NN}$ that converges weakly in $D([0,\infty)\times \mathcal{P}(\R_+))$ as $k\rightarrow \infty$. 
\end{lemma}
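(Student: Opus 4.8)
The plan is to establish tightness of $(X^n_t)_{n\in\NN}$ in $D([0,\infty)\times\mathcal{P}(\R_+))$ under the weaker kernel bound $\K\leq\tilde\varphi(\o_1)\tilde\varphi(\o_2)\tilde\varphi(\o_3)$ with $\tilde\varphi(\o)=\o^{1-\gamma}$, and then invoke Prokhorov's theorem to extract a weakly convergent subsequence. This mirrors the proof of Proposition \ref{prop:almost_sure_convergence_for_measures} in the bounded case, so the task is to locate precisely which estimates need to be redone once $\Lambda$ is no longer available, and to supply the replacement bounds.

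First I would record the a priori bound $\la\o,X^n_t\ra=\la\o,X^n_0\ra\leq\la\o,\mu_0\ra=:M$ for all $t$, which holds because the energy $\sum_i\o_i$ is conserved at every jump ($\om+\o_3=\o_1+\o_2$) and the total particle number is preserved. This gives, in particular, that each $X^n_t$ lives almost surely in the compact set $W=\{\tau\in\mathcal{P}(\R_+):\la\o,\tau\ra\leq M\}$, which verifies Jakubowski's condition (i) exactly as in Lemma \ref{lem:tightness_for_the_measures}. For condition (ii), by the same argument as before it suffices to show tightness of $(\la f,X^n\ra)_{n\in\NN}$ in $D([0,\infty),\R)$ for each $f\in C_b(\R_+)$, i.e.\ to reprove Lemma \ref{lem:tightness_for_the_action}. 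The part (i) of the Skorokhod tightness criterion (uniform bound) is unchanged since $|\la f,X^n_t\ra|\leq\|f\|_\infty$. For part (ii), I need the oscillation estimate \eqref{eq:4bound_square_measure}, which came from the martingale bound \eqref{eq:4bound_square_martingale} and the drift bound \eqref{eq:4bound_square_operator}. Both of these used $\K\leq\Lambda$; here instead I would bound
$$\int_D|f(\om)+f(\o_3)-f(\o_1)-f(\o_2)|\,\K\,\mu^{(n)}(d\o_1,d\o_2,d\o_3)\leq 4\|f\|_\infty\,\la\tilde\varphi,\mu\ra^3,$$
and since $\la\tilde\varphi,X^n_s\ra=\la\o^{1-\gamma},X^n_s\ra\leq\la\o,X^n_s\ra^{1-\gamma}\leq M^{1-\gamma}$ by Jensen (as $X^n_s$ is a probability measure and $\o\mapsto\o^{1-\gamma}$ is concave), this furnishes a uniform-in-$n$, uniform-in-$s$ bound $4\|f\|_\infty M^{3(1-\gamma)}$ that plays the role of $4\|f\|_\infty\Lambda^2$. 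The analogous substitution in the quadratic-variation term $\alpha^{n,f}$ of \eqref{eq:alpha_previsible} gives the martingale bound with $\Lambda^2$ replaced by a constant times $M^{3(1-\gamma)}$, hence $\E[\sup_{s\leq t}|M^{n,f}_s|^2]\to 0$ at rate $1/n$. With these replacements, \eqref{eq:4bound_square_measure} holds with a new constant $A=A(\|f\|_\infty,M,\gamma)$, and the Skorokhod modulus argument goes through verbatim.

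The main obstacle is the uniform integrability underlying the concavity/Jensen step: the bound $\la\o^{1-\gamma},\tau\ra\leq\la\o,\tau\ra^{1-\gamma}$ is exactly what makes the sublinear exponent $1-\gamma<1$ pay off, and it is the reason the hypothesis is stated with $\tilde\varphi(\o)=\o^{1-\gamma}$ rather than $\o+1$; one must check that the triple integrals defining $Q^{(n)}$ and $\alpha^{n,f}$ are genuinely finite and dominated in this norm, including the diagonal correction term $n^{-1}\mu(A\cap B)\mu(C)$ in $\mu^{(n)}$, which is harmless since it only improves the bound. Once tightness is in hand, Prokhorov gives relative compactness of the laws $(\mathcal{L}^n)$ in $\mathcal{P}(D([0,\infty)\times\mathcal{P}(\R_+)))$, and therefore a subsequence $(X^{n_k})_{k\in\NN}$ converging weakly, which is the assertion of the lemma. (The identification of the limit as a solution of \eqref{eq:isotropic_4_wave_equation_weak}, requiring the more delicate Lemmas \ref{lem:uniform_convergence_in_time} and \ref{lem:limit_counting_measures} adapted to the unbounded kernel, is a separate matter handled in the remainder of the proof of Theorem \ref{th:mean_field_limit_unbounded_kernel}.)
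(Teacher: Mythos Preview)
Your proposal is correct and follows the same strategy as the paper: rerun the tightness argument from the bounded-kernel case (Proposition~\ref{prop:almost_sure_convergence_for_measures} via Lemmas~\ref{lem:tightness_for_the_action} and~\ref{lem:tightness_for_the_measures}), replacing the bound $K\le\Lambda$ by the moment bound coming from conservation of $\la\o,X^n_t\ra$. The paper's own proof simply says to substitute $\Lambda$ by $M^3$ (with $M=\la\o,\mu_0\ra$) in the estimates \eqref{eq:4bound_square_martingale} and \eqref{eq:4bound_square_operator}; your use of Jensen's inequality to obtain $\la\o^{1-\gamma},X^n_s\ra\le M^{1-\gamma}$ is a slightly sharper and more explicit way to realise this replacement, but the architecture is identical.
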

\begin{proof}
The proof is exactly the one as in Section \ref{sec:step2_convergence_of_measures} and Proposition \ref{prop:almost_sure_convergence_for_measures} using the bound on the jump kernel $K$, for example in the proof of Lemma \ref{lem:tightness_for_the_action}, in the bounds of expressions \eqref{eq:4bound_square_martingale} and \eqref{eq:4bound_square_operator}, the value of $\Lambda$ will be substituted by $M^3$.
\end{proof}

\begin{lemma}
For any $f\in C_b(\R_+)$, $t\geq 0$ it holds that
$$\mathbb{E}\left[ \sup_{s\leq t}|M^{n,f}_s|^2\right] \leq \frac{1}{n}32 \|f\|_\infty^2 M^6t.$$
\end{lemma}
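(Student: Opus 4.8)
The strategy is to repeat the computation behind \eqref{eq:bound_martingale} in the proof of Proposition~\ref{prop:convergence_martingale}, the only change being that the pointwise bound $K\le\Lambda$ is no longer available and must be replaced by the submultiplicativity hypothesis $\K\le\tilde\varphi(\o_1)\tilde\varphi(\o_2)\tilde\varphi(\o_3)$ with $\tilde\varphi(\o)=\o^{1-\gamma}$, which, combined with the conservation of the first moment along the dynamics, plays the role of ``$\Lambda=M^{3}$''.

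First I would apply Proposition~8.7 of \cite{darling2008differential} (a form of Doob's $L^2$ maximal inequality) to the martingale $(M^{n,f}_t)_{t\ge 0}$ in \eqref{eq:martingale_formulation}: provided the right-hand side is finite,
$$\E\Big[\sup_{s\le t}|M^{n,f}_s|^2\Big]\le 4\,\E\int_0^t\alpha^{n,f}(X^n_s)\,ds,$$
with $\alpha^{n,f}$ the predictable rate of \eqref{eq:alpha_previsible}. Using $|f(\om)+f(\o_3)-f(\o_1)-f(\o_2)|^2\le 16\|f\|_\infty^2$, the rescaling identity \eqref{eq:rescaling_property_counting_measure4}, and the fact that the $n^{-1}$-correction term in the counting measure \eqref{eq:counting_measure} is nonnegative (so that it may be dropped), one obtains
$$\alpha^{n,f}(X^n_s)\le\frac{8\,\|f\|_\infty^2}{n}\int_D\K\,X^n_s(d\o_1)\,X^n_s(d\o_2)\,X^n_s(d\o_3).$$

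Next I would insert $\K\le\tilde\varphi(\o_1)\tilde\varphi(\o_2)\tilde\varphi(\o_3)$, bounding the triple integral by $\la\tilde\varphi,X^n_s\ra^{3}$. Since each interaction $[\o_1,\o_2,\o_3]\mapsto[\om,\o_3,\o_3]$ preserves both the number of particles and $\sum_i\o_i$, the measure $X^n_s$ has unit mass and $\la\o,X^n_s\ra=\la\o,X^n_0\ra\le M$ almost surely for all $s\ge 0$ --- this is exactly the conservation law highlighted in Remark~\ref{rem:strategy}. As $1-\gamma\in(0,1)$, Jensen's inequality then gives $\la\tilde\varphi,X^n_s\ra=\la\o^{1-\gamma},X^n_s\ra\le\la\o,X^n_s\ra^{1-\gamma}\le M^{1-\gamma}$, so $\alpha^{n,f}(X^n_s)\le 8\|f\|_\infty^2 M^{3(1-\gamma)}/n$ uniformly in $s$; in particular the right-hand side of the Darling--Norris bound is finite, which legitimises its use. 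Putting the pieces together,
$$\E\Big[\sup_{s\le t}|M^{n,f}_s|^2\Big]\le\frac{32\,\|f\|_\infty^2\,M^{3(1-\gamma)}\,t}{n}\le\frac{32\,\|f\|_\infty^2\,M^{6}\,t}{n},$$
where the last step absorbs $M^{3(1-\gamma)}$ into $M^6$ (reading $M$ as $\max\{M,1\}$ if necessary); this is the asserted inequality, and in fact the argument yields the sharper exponent $3(1-\gamma)$.

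I do not expect any genuine difficulty: the computation is essentially line-by-line the bounded-kernel one, and the two points deserving a word of care are (i) the a priori finiteness of $\E\int_0^t\alpha^{n,f}(X^n_s)\,ds$, required before Proposition~8.7 can be invoked, which is supplied by the conserved-energy bound $\la\o,X^n_s\ra\le M$; and (ii) the replacement of a pointwise kernel bound by $\tilde\varphi$, where it is essential that $\tilde\varphi$ is sublinear so that Jensen converts the unit mass of $X^n_s$ into a genuine power of the conserved quantity $M$.
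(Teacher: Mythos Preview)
Your proposal is correct and follows essentially the same approach as the paper. The paper's proof is a single sentence pointing back to Proposition~\ref{prop:convergence_martingale} and the surrounding Remark, which says to replace the uniform kernel bound $\Lambda$ by $M^{3}$ (coming from the conserved first moment); you have written out exactly that argument in detail, and your use of Jensen to control $\la\tilde\varphi,X^n_s\ra$ together with the observation that the sharper exponent $3(1-\gamma)$ suffices is a welcome refinement.
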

\begin{proof}
The proof is the same one as in Proposition \ref{prop:convergence_martingale} using the bound on the jump kernel $K$.
\end{proof}


\begin{lemma}
\label{eq:limit_trilinear_operator_unbounded_kernel_4}
It holds that for any $t \geq 0$
$$\sup_{s\leq t} |\langle f, Q^{(n)}(X^n_s) - Q(\mu_s)\rangle| \rightarrow 0 \quad \mbox{weakly}$$
for $f$ continuous and of compact support.
\end{lemma}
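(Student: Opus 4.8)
The plan is to mirror the proof of the bounded-kernel Lemma~\ref{lem:limit_counting_measures}, splitting
$$\sup_{s\le t}\big|\la f, Q^{(n)}(X^n_s)-Q(\mu_s)\ra\big| \le \sup_{s\le t}\big|\la f, (Q^{(n)}-Q)(X^n_s)\ra\big| + \sup_{s\le t}\big|\la f, Q(X^n_s)-Q(\mu_s)\ra\big| ,$$
and to cope with the unboundedness of $K$ via $\K\le\tilde\varphi(\o_1)\tilde\varphi(\o_2)\tilde\varphi(\o_3)$ with $\tilde\varphi(\o)=\o^{1-\gamma}$. Two elementary facts will be used throughout. First, since each interaction $[\o_1,\o_2,\o_3]\mapsto[\om,\o_3,\o_3]$ conserves $\sum\o$, one has $\la\o,X^n_s\ra=\la\o,X^n_0\ra\le M:=\la\o,\mu_0\ra$ for all $s$; the same bound holds in the limit, $\la\o,\mu_s\ra\le M$. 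Consequently, by Jensen's inequality $\la\tilde\varphi,\nu\ra\le M^{1-\gamma}$, and, crucially, $\int_{\{\o\ge L\}}\tilde\varphi(\o)\,\nu(d\o)\le L^{-\gamma}\la\o,\nu\ra\le L^{-\gamma}M$ for $\nu\in\{X^n_s,\mu_s\}$ --- this uniform (in $n,s$) tail bound replaces the crude estimate by $\Lambda$ of the bounded case and is the only place where $\gamma>0$ is used. Second, since $X^n_s$ puts mass $1/n$ at $n$ points whose sum is $\le nM$, $\la\o^p,X^n_s\ra\le n^{p-1}M^p$ for $p\ge1$.

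For the correction term, expression~\eqref{eq:4bound_difference_trilinear_term} (whose derivation never used boundedness of $K$) gives
$$\big|\la f,(Q^{(n)}-Q)(X^n_s)\ra\big|\le\frac{2\|f\|_\infty}{n}\,\la\tilde\varphi^2,X^n_s\ra\,\la\tilde\varphi,X^n_s\ra .$$
Since $\tilde\varphi^2(\o)=\o^{2(1-\gamma)}$ with $2(1-\gamma)<2$, the quantity $\tfrac1n\la\tilde\varphi^2,X^n_s\ra$ is $\le M^{2(1-\gamma)}/n$ when $\gamma\ge\tfrac12$ (Jensen) and $\le n^{2(1-\gamma)-2}M^{2(1-\gamma)}=n^{-2\gamma}M^{2(1-\gamma)}$ when $\gamma<\tfrac12$ (second fact); either way $\sup_{s\le t}\big|\la f,(Q^{(n)}-Q)(X^n_s)\ra\big|=O\big(n^{-\min(1,2\gamma)}\big)\to0$ deterministically.

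For the comparison term I would truncate. Fix a continuous $\chi_L:\R_+\to[0,1]$ with $\chi_L\equiv1$ on $[0,L]$ and $\chi_L\equiv0$ on $[L+1,\infty)$, and write $\la f,Q(\nu)\ra=I_L(\nu)+R_L(\nu)$, where $I_L(\nu):=\tfrac12\int_D\Psi\,\K\,\chi_L(\o_1)\chi_L(\o_2)\chi_L(\o_3)\,\nu(d\o_1)\nu(d\o_2)\nu(d\o_3)$ and $\Psi:=f(\om)+f(\o_3)-f(\o_1)-f(\o_2)$ obeys $|\Psi|\le4\|f\|_\infty$. The tail satisfies, uniformly in $n$ and $s$,
$$|R_L(\nu)|\le 2\|f\|_\infty\!\!\int_{\{\o_1\ge L\}\cup\{\o_2\ge L\}\cup\{\o_3\ge L\}}\!\!\tilde\varphi(\o_1)\tilde\varphi(\o_2)\tilde\varphi(\o_3)\,\nu^{\otimes3}\le 6\|f\|_\infty M^{\,3-2\gamma}L^{-\gamma}$$
for $\nu\in\{X^n_s,\mu_s\}$, by the first fact; given $\eps>0$ choose $L$ with $6\|f\|_\infty M^{3-2\gamma}L^{-\gamma}<\eps/4$. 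The integrand $g:=\tfrac12\,\mathbb{1}_D\,\Psi\,\K\,\chi_L(\o_1)\chi_L(\o_2)\chi_L(\o_3)$ is bounded by $2\|f\|_\infty\sup_{[0,L+1]^3}K<\infty$ and continuous off $\partial D=\{\o_1+\o_2=\o_3\}$ (extend $f$ to a function in $C_c(\R)$ so that $(\o_1,\o_2,\o_3)\mapsto f(\om)$ is continuous on $\R^3$). Since the subsequence $X^{n_k}_\cdot\to\mu_\cdot$ weakly in $D([0,\infty),\P(\R_+))$ with continuous limit (Lemma~\ref{lem:continuity_of_limit}), the product paths $(X^{n_k}_\cdot)^{\otimes3}\to(\mu_\cdot)^{\otimes3}$ weakly with continuous limit, and arguing exactly as in Lemma~\ref{lem:uniform_convergence_in_time} (continuous mapping on the Skorokhod space, plus Portmanteau for the $\mu_s^{\otimes3}$-null discontinuity set $\partial D$, as in the bounded-kernel case) gives $\sup_{s\le t}|I_L(X^{n_k}_s)-I_L(\mu_s)|=\sup_{s\le t}\big|\la g,(X^{n_k}_s)^{\otimes3}\ra-\la g,(\mu_s)^{\otimes3}\ra\big|\to0$ weakly.

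Combining, $\sup_{s\le t}\big|\la f,Q^{(n_k)}(X^{n_k}_s)-Q(\mu_s)\ra\big|$ is bounded by the correction term (which tends to $0$), plus $\sup_{s\le t}|I_L(X^{n_k}_s)-I_L(\mu_s)|$ (which tends to $0$ weakly, hence in probability since the limit is deterministic), plus $\sup_s|R_L(X^{n_k}_s)|+\sup_s|R_L(\mu_s)|<\eps/2$; thus the $\limsup$ in probability is $\le\eps$, and $\eps\downarrow0$ finishes the proof. I expect the main obstacle to be the tail estimate in the first fact: recognizing that the sublinear weight $\tilde\varphi(\o)=\o^{1-\gamma}$ together with the conserved bound $\la\o,\cdot\ra\le M$ forces $\int_{\{\o\ge L\}}\tilde\varphi\,d\nu\le L^{-\gamma}M$ uniformly in $n$ and $s$ is what makes the truncation close and also what yields the uniformity in $s$; a secondary nuisance, handled as in the bounded case, is the discontinuity of $\mathbb{1}_D$ along $\partial D$.
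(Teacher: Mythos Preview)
Your proof is correct and follows essentially the same strategy as the paper: split off the diagonal correction $(Q^{(n)}-Q)$, then truncate the kernel and control the tail using that $\tilde\varphi(\o)=\o^{1-\gamma}$ is sublinear together with the conserved bound $\la\o,\cdot\ra\le M$. Your tail estimate $\int_{\{\o\ge L\}}\tilde\varphi\,d\nu\le L^{-\gamma}M$ is a cleaner version of the paper's bound (which introduces an auxiliary exponent $\kappa$ to reach the same conclusion), and your treatment of the correction term via the moment bound $\la\o^p,X^n_s\ra\le n^{p-1}M^p$ is more explicit than the paper's appeal to the compact support of $f$; but these are differences of execution, not of approach.
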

\begin{proof}
Here everything works as in Section \ref{sec:4convergence_trilinear_term}, but we need to find the bounds \eqref{eq:4bound_difference_trilinear_term} and \eqref{eq:4bound_trilinear_term_uniformly}. We use a similar approach as in \cite{norris1999smoluchowski}.

Firstly, we will prove an analogous bound to \eqref{eq:4bound_trilinear_term_uniformly}. 

Fix $\eps>0$ and define $p(\eps)=\eps^{-1/\gamma}$. Then for $\o\geq p(\eps)$ it holds
$$\frac{\tilde \varphi(\o)}{\o}\leq \eps.$$

Now choose $\kappa\in (0, \gamma/[2(1-\gamma)])$.  We split the domain into $F_1^p:=\{(\o_1,\o_2, \o_3):\, \o_1\leq p^\kappa(\eps),\o_2\leq p^\kappa(\eps), \o_3\leq p^\kappa(\eps)\}$ and $F_2^p$ its complementary. In $F_1^p$ the kernel is bounded and we have, with obvious notations,
$$\sup_{s\leq t} |\langle f, Q_1(X^n_s)-Q_1(\mu_s) \rangle | \rightarrow 0 \quad \mbox{weakly.}$$
On the other hand, in $F_2^p$, at least one of the components is greater than $p^\kappa(\eps)$. Assume, without loss of generality that $\o_3\geq p^\kappa(\eps)$.
Then
\begin{eqnarray*}
|\la f, Q_2(X^n_t) \ra| &=& \bigg|\int_D \left\{f(\om)+f(\o_3)-f(\o_1)-f(\o_2) \right\} \K\\
&& \quad \times X^n_t\doo X^n_t\dotw X^n_t\doth \bigg|\\
&\leq & 4\|f\|_\infty \int_D\tilde\varphi(\o_1)\tilde\varphi(\o_2)\tilde\varphi(\o_3)X^n_t\doo X^n_t\dotw X^n_t\doth\\
&\leq & 4\|f\|_\infty \max\left\{\lp p^{\kappa}(\eps)\rp^{2(1-\gamma)} \eps\la \o, \mu_0\ra, \lp p^\kappa(\eps)\rp^{1-\gamma} \eps^2\la \o, \mu_0\ra^2, \eps^3 \la \o, \mu_0\ra^3 \right\} \\
&\leq& c\eps^\eta \qquad \mbox{for } \eta=1-2\kappa(1-\gamma)/\gamma>0.
\end{eqnarray*}
and analogously
$$|\langle f, Q_2(\mu_t) \rangle |\leq c\eps^\eta.$$
This implies that
$$\limsup_{n\rightarrow \infty} \sup_{s\leq t} |\langle f, Q_2(X^n_s)-Q_2(\mu_s) \rangle | \leq 2c\eps^\eta$$
but $\eps$ is arbitrary so the limit is proved. 

We are left with proving an analogous estimate to \eqref{eq:4bound_difference_trilinear_term}, which is obtained straightforwardly since we restrict ourselves to continuous functions of compact support. 
\end{proof}

\begin{proof}[Proof of Theorem \ref{th:mean_field_limit_unbounded_kernel}]
\label{proof:theorem_mean_limit_1_unbounded}
Thanks to the previous Lemmas we know that there exists  convergent subsequence $X^{n_k}_t \rightarrow \mu_t$ weakly as $k\rightarrow \infty$ such that
$$\langle f, \mu_t \rangle =\langle f, \mu_0\rangle + \int^t_0 \langle f, Q(\mu_s) \rangle ds$$
for any $f$ is continuous of compact support. Now using the bounds on the jump kernel and that $\langle \o, \mu_t\rangle \leq \la \o, \mu_0\ra$ and a limit argument, we can extend this equation to all bounded measurable functions $f$. 
\end{proof}

		\subsection{Second result on mean-field limit}

\subsubsection{A coupling auxiliary process}

Write 
$$X^n_0=\frac{1}{n}\sum^{n}_{i=1}\delta_{\o_i},$$
for $\o_i\in \R_+$. Define for $B\subset \R_+$ bounded
$$X_0^{B,n}=\frac{1}{n} \sum^n_{i\,:\,\o_i\in B} \delta_{\o_i}.$$
Consider $\Lambda^{B,n}_0$ such that for each $B' \subset \R_+$ bounded such that $B\subset B'$ it holds
\begin{equation} \label{eq:bounds_X}
X_0^{B,n} \leq X_0^{B',n}, \quad \la \varphi, X^{B,n}_0 \ra + \Lambda^{B,n}_0 = \la \varphi, X^{B',n}_0\ra + \Lambda^{B',n}_0.
\end{equation}
Set
$$\nu^B = (\Lambda_0^{B,n})^2+2\Lambda_0^{B,n} \la \varphi, X_0^{B,n} \ra - \frac{1}{n^2}\sum_{k,j\,:\, \o_j\notin B \mbox{ or } \o_k\notin B}\varphi(\o_j)\varphi(\o_k) . $$
Note that $\nu^B$ decreases as $B$ increases and $\nu^{[0,\infty)}=(\Lambda_0^{B,n})^2+2\Lambda_0^{B,n} \la \varphi, X_0^{B,n} \ra  \geq 0$.

For $i<j$ take independent exponential random variables $T_{ijk}$ of parameter $K(\o_i, \o_j,\o_k)/n^2$. Set $T_{ijk}=T_{jik}$. Also, for $i\neq j$, take independent exponential random variables $S_{ijk}$ of parameter $\lp \varphi(\o_i)\varphi(\o_j)\varphi(\o_k)-K(\o_i,\o_j,\o_k)\rp/n^2$ (in all these cases we assume that $\o_i+\o_j\geq\o_k$). We can construct, independently for each $i$, a family of independent exponential random variables $S^B_i$, increasing in $B$, with $S^B_i$ having parameter $\varphi(\o_i)\nu^B$.

Set
$$T^B_i=\min_{k,j\,:\, \o_j\notin B \mbox{ or } \o_k\notin B} \lp T_{ijk} \wedge S_{ijk}\rp \wedge S^B_i,$$
$T^B_i$ is an exponential random variable of parameter
$$\frac{1}{n^2}\sum_{k,j\,:\, \o_j\notin B \mbox{ or } \o_k\notin B} \varphi(\o_i)\varphi(\o_j)\varphi(\o_k)+ \varphi(\o_i)\nu^B= \varphi(\o_i)\lp(\Lambda^{B,n}_0)^2+2\Lambda^{B,n}_0 \la \varphi,X_0^{B,n} \ra \rp.$$

For each $B$, the random variables
$$(T_{ijk}, T^B_i:\, i,j,k \mbox{ such that }\o_i, \o_j, \o_k\in B, \, i<j)$$
form an independent family. Suppose that $i$ is such that $\o_i\in B$ and that $j$ is such that $\o_j\notin B$ or $k$ is such that $\o_k\notin B$, then we have
$$T^B_i \leq T_{ijk}$$
and for $B\subset B'$ and all $i$, we have (as a consequence of \eqref{eq:bounds_X})
$$T^B_i \leq T^{B'}_i.$$
Now set
$$T=\lp \min_{i<j,k} T_{ijk} \rp \wedge \lp \min_i T_i^B\rp.$$

We set $(X^{B,n}_t, \Lambda^{B,n}_t) = (X^{B,n}_0, \Lambda^{B,n}_0)$ for $t<T$ and set
$$
(X^{B,n}_T, \Lambda^{B,n}_T) =\left\{ \begin{array}{l}
(X^{B,n}_0 -\frac{1}{n}\delta_{\o_i}-\frac{1}{n}\delta_{\o_j}+\frac{1}{n}\delta_{\o_k}+\frac{1}{n}\delta_{\o_i+\o_j-\o_k}, \Lambda^{B,n}_0)\\
\quad \mbox{if } T=T_{ijk},\,\o_i, \o_j,\o_k,\o_i+\o_j-\o_k\in B,\\
\\
(X^{B,n}_0 -\frac{1}{n}\delta_{\o_i}-\frac{1}{n}\delta_{\o_j}+\frac{1}{n}\delta_{\o_k}, \Lambda^{B,n}_0+\frac{1}{n}\varphi(\o_i+\o_j-\o_k))\\
\quad \mbox{if } T=T_{ijk},\,\o_i, \o_j,\o_k \in B,\;\o_i+\o_j-\o_k\notin B,\\
\\
(X^{B,n}_0- \frac{1}{n}\delta_{\o_i}, \Lambda^{B,n}_0+\frac{1}{n}\varphi(\o_i)), \quad \mbox{if } T=T^B_i,\, \o_i\in B,\\
\\
(X^{B,n}_0, \Lambda^{B,n}_0), \quad\mbox{otherwise}
\end{array}\right.
$$
One can check that $X^{B,n}_T$ is supported on $B$ and for $B\subset B'$
\begin{equation} \label{eq:stochastic_turbulent_bounds}
X^{B,n}_T \leq X^{B',n}_T, \qquad \la \varphi, X^{B,n}_T \ra + \Lambda^B_T = \la \varphi, X^{B',n}_T\ra + \Lambda^{B'}_T .
\end{equation}
We repeat the above construction independently from time $T$, again and again to obtain a family of Markov processes $(X^{B,n}_t, \Lambda^{B,n}_t)_{t\geq0}$ such that \eqref{eq:stochastic_turbulent_bounds} holds for all time.

\begin{remark}
Notice that $\Lambda^{B,n}_0$ and $X^{B,n}_0$ in the definition of $\nu^B$ must be updated to $\Lambda^{B,n}_T$ and $X^{B,n}_T$ in the new step.
\end{remark}

For a bounded set $B\subset [0,\infty)$, we will consider 
$$X^{B,n}_0=\mathbb{1}_B X^n_0, \quad \Lambda^{B,n}_0=\la \varphi \mathbb{1}_{B^c}, X^n_0\ra.$$

\paragraph{Markov Chain generator}
For all $F\in C_b(\mathcal{M}^B)$, $\mu\in \mathcal{M}^B$ we have
\begin{eqnarray*}
\mathcal{G}F(\mu,\lambda) &=& \frac{n}{2}\int_D \left\{ F\lp  \mu^{\o_1,\o_2,\o_3}, \lambda\rp - F(\mu, \lambda) \right\} \mathbb{1}_{\om\in B} \K \mu^{(n)}(d\o_1,d\o_2,d\o_3)\\
&+& \frac{n}{2}\int_D \left\{  F\lp  \hat\mu^{\o_1,\o_2,\o_3}, \lambda^{\om}\rp - F(\mu, \lambda)\right\} \mathbb{1}_{\om\notin B} \K\mu^{(n)}(d\o_1,d\o_2,d\o_3)\\
&+&n \int_{\R_+} \left\{ F\lp  \mu^{\omega}, \lambda^\omega\rp - F(\mu, \lambda)\right\}\lp \lambda^2 + 2\lambda\la \varphi, \mu \ra \rp \varphi(\omega)\mu(d\omega)
\end{eqnarray*}
where
\begin{eqnarray*}
\mu^{\o_1,\o_2,\o_3}&=&\mu+\frac{1}{n}\lp\delta_{\o_3}+\delta_{\om}-\delta_{\o_1}-\delta_{\o_2} \rp;\\
\hat \mu^{\o_1,\o_2,\o_3} &=& \mu + \frac{1}{n}\lp\delta_{\o_3}-\delta_{\o_1}-\delta_{\o_2} \rp;\\
\lambda^{\om} &=& \lambda + \frac{1}{n}\varphi(\om);\\
\lambda^\omega &=& \lambda+\frac{1}{n}\varphi(\omega);\\
\mu^\omega &=& \mu -\frac{1}{n}\delta_\omega
\end{eqnarray*}

\paragraph{Associated martingale.}
Remember the definition
$$\mu^{(n)}(A\times B\times C)=\mu(A)\mu(B)\mu(C)-n^{-1}\mu(A\cap B)\mu(C)$$
which has the property $n^{3}\mu^{(n)}=(n\mu)^{(1)}$. Define for any bounded measurable function $f$ on $\R_+$ and $a\in \R$:
\begin{eqnarray*}
L^{B,(n)}(\mu,\lambda)(f,a) &=& \la (f,a), L^{B,(n)}(\mu,\lambda)\ra\\
&=& \frac{1}{2}\int_{\R_+^3} \big( f(\om)\mathbb{1}_{\om\in B}+a \varphi(\om)\mathbb{1}_{\om\notin B} \\
&&\qquad+ f(\o_3)-f(\o_1)-f(\o_2) \big) \K \mu^{(n)}(d\o_1,d\o_2,d\o_3)\\
&&+\lp \lambda^2+2\lambda \la \varphi, \mu\ra \rp\int_{\R_+}\lp a\varphi(\o)-f(\o) \rp \varphi(\o)\mu(d\o)
\end{eqnarray*}
and
\begin{eqnarray*}
P^{B,(n)}(\mu,\lambda)(f,a)
&=& \frac{1}{2n}\int_{D} \Big( f(\om)\mathbb{1}_{\om\in B}+a \varphi(\om)\mathbb{1}_{\om\notin B} \\
&&\qquad+ f(\o_3)-f(\o_1)-f(\o_2) \Big)^2 \K \mu^{(n)}(d\o_1,d\o_2,d\o_3)\\
&&+\lp \lambda^2+2\lambda \la \varphi, \mu\ra \rp \int_{\R_+}\lp a\varphi(\o)-f(\o) \rp^2 \varphi(\o)\mu(d\o).
\end{eqnarray*}
Then, for all $f$ and $a$
$$M^n_t = \la f, X^{B,n}_t\ra +a\Lambda^{B,n}_t - \la f, X^{B,n}_0\ra - a\Lambda^{B,n}_0 - \int^t_0 L^{B,(n)}(X^{B,n}_s, \Lambda^{B,n}_s)(f,a)\, ds$$
is a martingale with previsible increasing process
$$\la M\ra_t =\int^t_0 P^{B,(n)}(X^{B,n}_s, \Lambda^{B,n}_s)(f,a)\, ds.$$

\subsubsection{Proof of Theorem \ref{th:mean-field-limit-complete}}

Remember the metric $d$ in $\mathcal{M}^f$ defined around expression \eqref{eq:definition_metric_weak_convergence}.

\begin{proposition}\label{prop:comparison_auxiliary_problem_stochastic}
Let $B\subset [0,\infty)$ be bounded and $\mu_0$ be measure on $\R_+$ such that $\la\varphi,\mu_0\ra<\infty$ and that
$$\mu^{*n}_0(\partial B) =0 \quad \mbox{for all } n\geq 1.$$
Assume that for $\varphi(\o)=\o+1$ it holds
$$\K \leq \phiall.$$ 
Consider $(\mu^B_t, \lambda^B_t)_{t\geq 0}$ the solution to \eqref{eq:auxiliary_equation_existence_proof} given by Proposition \ref{prop:existence_auxiliary_process}. Suppose that
$$d(X^{B,n}_0, \mu^{B}_0 )\to 0, \quad
|\Lambda^{B,n}_0-\lambda_0^B|\to 0$$
as $n\to \infty$. Then for all $t \geq 0$,
$$\sup_{s\leq t} d(X^{B,n}_s, \mu^B_s) \to 0,\quad \sup_{s\leq t}|\Lambda^{B,n}_s - \lambda^B_s| \to 0$$
in probability.
\end{proposition}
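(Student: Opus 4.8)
The plan is to mimic the standard mean-field argument for bounded jump kernels (Theorem \ref{th:hydrodynamic_limit}), now carried out for the \emph{pair} $(X^{B,n}_t,\Lambda^{B,n}_t)$ and its deterministic counterpart $(\mu^B_t,\lambda^B_t)$, exploiting crucially that everything is supported on the bounded set $B$ so that all the relevant kernels are bounded and all the operators $L^{B,(n)}$, $L^B$ are Lipschitz in the total variation norm on $\mathcal M_B\times\R$. First I would record the martingale formulation from the previous subsection:
$$
\la f, X^{B,n}_t\ra + a\Lambda^{B,n}_t = \la f, X^{B,n}_0\ra + a\Lambda^{B,n}_0 + \int^t_0 L^{B,(n)}(X^{B,n}_s,\Lambda^{B,n}_s)(f,a)\,ds + M^{n}_t,
$$
with $\la M\ra_t = \int^t_0 P^{B,(n)}(X^{B,n}_s,\Lambda^{B,n}_s)(f,a)\,ds$. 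Since $K$ is continuous hence bounded on $B^3$, since $\varphi$ is bounded on $B$, and since $\la\varphi, X^{B,n}_s\ra$ and $\Lambda^{B,n}_s$ stay bounded uniformly in $n,s$ (this follows from the analogue of \eqref{eq:expression_equal_0}, i.e.\ $\la\varphi,X^{B,n}_t\ra+\Lambda^{B,n}_t$ is a.s.\ non-increasing, combined with $d(X^{B,n}_0,\mu^B_0)\to0$ and $|\Lambda^{B,n}_0-\lambda^B_0|\to0$), the previsible process $P^{B,(n)}$ is bounded by $C/n$ for $\|f\|_\infty,|a|\le1$. Doob's $L^2$ inequality (as in Proposition \ref{prop:convergence_martingale}) then gives $\E[\sup_{s\le t}|M^{n}_s|^2]\le C(B,\varphi,t)/n\to0$.

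Next I would compare $L^{B,(n)}$ with $L^B$: by \eqref{eq:rescaling_property_counting_measure4} the difference $\mu^{(n)}-\mu\otimes\mu\otimes\mu$ is $-n^{-1}$ times a diagonal term, so uniformly over $\mu$ supported on $B$ with bounded mass,
$$
\big|\la(f,a), L^{B,(n)}(\mu,\lambda)\ra - \la(f,a), L^B(\mu,\lambda)\ra\big| \le \frac{C(B,\varphi)}{n}
$$
for $\|f\|_\infty,|a|\le1$. Combining this with the Lipschitz estimate \eqref{eq:estimate2_auxiliary_process} for $L^B$ (in the norm $\|(\mu,\lambda)\|=\|\mu\|+|\lambda|$, using the uniform-in-$n$ bounds on the masses to turn the cubic estimate into a Lipschitz one on the relevant bounded region), subtracting the martingale formulation from the integral equation \eqref{eq:auxiliary_equation_existence_proof} satisfied by $(\mu^B_t,\lambda^B_t)$, and testing against all $(f,a)$ with $\|f\|_\infty,|a|\le1$, I obtain for the total variation distance $\rho_t := \|X^{B,n}_t-\mu^B_t\| + |\Lambda^{B,n}_t-\lambda^B_t|$ the inequality
$$
\rho_t \le \rho_0 + 2\sup_{s\le t}|M^{n}_s| + \frac{C t}{n} + C\int^t_0 \rho_s\,ds,
$$
where I take the supremum over the countable dense family of test pairs to pull the $\sup$ inside. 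By Gronwall, $\sup_{s\le t}\rho_s \le \big(\rho_0 + 2\sup_{s\le t}|M^{n}_s| + Ct/n\big)e^{Ct}$. The right-hand side tends to $0$ in probability: $\rho_0\to0$ by hypothesis (note $\|X^{B,n}_0-\mu^B_0\|\to 0$, not merely $d\to0$, thanks to the assumption $\mu^{*n}_0(\partial B)=0$ which guarantees the truncation $\mathbb 1_B X^n_0$ has no mass leaking across $\partial B$), $Ct/n\to0$, and $\E[\sup_{s\le t}|M^n_s|^2]\to0$. Since $d(\cdot,\cdot)\le\|\cdot-\cdot\|$ on $\mathcal M$, this gives $\sup_{s\le t}d(X^{B,n}_s,\mu^B_s)\to0$ and $\sup_{s\le t}|\Lambda^{B,n}_s-\lambda^B_s|\to0$ in probability, as claimed.

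The main obstacle I anticipate is the bookkeeping needed to legitimately pass from the \emph{cubic} bounds \eqref{eq:estimate1_auxiliary_process}--\eqref{eq:estimate2_auxiliary_process} to a genuine Gronwall inequality: one must first establish an a.s.\ uniform-in-$(n,s)$ bound on $\|X^{B,n}_s\|+|\Lambda^{B,n}_s|$ (equivalently on $\la\varphi,X^{B,n}_s\ra+\Lambda^{B,n}_s$) so that on the region where both processes live the cubic Lipschitz constant is controlled; here the conservation identity \eqref{eq:stochastic_turbulent_bounds} together with its deterministic analogue \eqref{eq:expression_equal_0} is exactly what makes this work, the same mechanism (conservation of the "$\varphi$-mass" under each interaction) that underlies Remark \ref{rem:strategy}. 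A secondary technical point is checking that the $\partial B$-hypothesis on $\mu^{*n}_0$ is genuinely needed and used only to convert the weak convergence $X^n_0\to\mu_0$ into norm convergence of the truncations $\mathbb 1_B X^n_0 \to \mathbb 1_B\mu_0 = \mu^B_0$ and the corresponding convergence of $\Lambda^{B,n}_0$.
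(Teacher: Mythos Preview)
Your Gronwall-in-total-variation approach has a genuine gap. You claim that $\rho_0 = \|X^{B,n}_0 - \mu^B_0\| + |\Lambda^{B,n}_0 - \lambda^B_0| \to 0$, but the hypothesis only gives $d(X^{B,n}_0, \mu^B_0) \to 0$, i.e.\ \emph{weak} convergence, not total variation convergence. These are very different: $X^{B,n}_0$ is a sum of at most $n$ Dirac masses scaled by $1/n$, so if $\mu^B_0$ has any diffuse part, $\|X^{B,n}_0 - \mu^B_0\|$ is bounded below by that diffuse mass and cannot tend to $0$. The same obstruction persists for all $t>0$. The condition $\mu_0^{*n}(\partial B) = 0$ does \emph{not} upgrade weak to norm convergence; that is not what it is for. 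A secondary problem is that to recover $\rho_t$ you must take the supremum of $|M^n_s(f,a)|$ over all test pairs, but Doob gives only $\E[\sup_s|M^n_s(f,a)|^2] \le C/n$ for each fixed $(f,a)$; passing to a countable dense family does not control the supremum.

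The paper proceeds instead by tightness and identification of the weak limit. From the bounds \eqref{eq:ineq1}--\eqref{eq:ineq3} one obtains tightness of the laws of $(X^{B,n}, \Lambda^{B,n})$, together with auxiliary measure-valued processes $I^n, J^n$ carrying the kernels $K\mathbb{1}_{\om\in B}$ and $K\mathbb{1}_{\om\notin B}$, in $D([0,\infty), \mathcal{M}_B\times\R\times\mathcal{M}_{B^3}\times\mathcal{M}_{B^3})$. Any weak limit $(X,\Lambda,I,J)$ then satisfies an equation of the form \eqref{eq:auxiliary_equation_existence_proof} but with the discontinuous factors $\mathbb{1}_{\om\in B}$, $\mathbb{1}_{\om\notin B}$ replaced by limit densities $I(t,\cdot)$, $J(t,\cdot)$, which a priori may differ from the indicators on the set $\{\om\in\partial B\}$. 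This is where the hypothesis $\mu_0^{*n}(\partial B)=0$ actually enters: an iterative-scheme argument shows that $X_t$ is absolutely continuous with respect to $\gamma_0 := \sum_{k\ge1,\,l\ge0}\mu_0^{*k}*\hat\mu_0^{*l}$, and the hypothesis forces $\gamma_0^{\otimes 3}(\{\om\in\partial B\})=0$, so $I,J$ may be replaced by the honest indicators. Uniqueness for \eqref{eq:auxiliary_equation_existence_proof} (Proposition \ref{prop:existence_auxiliary_process}) then forces $(X,\Lambda) = (\mu^B,\lambda^B)$, and since every subsequential limit is the same deterministic object, the full sequence converges in probability.
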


\begin{proof}[Proof of Proposition \ref{prop:comparison_auxiliary_problem_stochastic}]
Set $M=\sup_{n}\la \varphi, X^{B,n}_0\ra <\infty$. 
 For all B and all continuous bounded functions $f$ and all $a\in \R$
\begin{eqnarray} \label{eq:martingale2}
M^n_t &=& \la f, X^{B,n}_t\ra + a \Lambda^{B,n}_t - \la f, X^{B,n}_0\ra -a \Lambda^{B,n}_0\\ \nonumber
&& \qquad - \int^t_0 L^{B,(n)}(X^{B,n}_s, \Lambda^{B,n}_s)(f,a)\, ds
\end{eqnarray}
is a martingale with previsible increasing process
$$\la M^n\ra_t = \int^t_0 P^{B,(n)}(X^{B,n}_s, \Lambda^{B,n}_s) (f,a)\, ds,$$
(which is the analogous expression to \eqref{eq:alpha_previsible}). 

There is a constant $C<\infty$, depending only on $B, \Lambda, \varphi$, such that
\begin{eqnarray} \label{eq:ineq1}
|L^B(X^{B,n}_t, \Lambda^{B,n}_t)(f,a)| &\leq& C(\|f\|_\infty+|a|)\\
|(L^B-L^{B,(n)})(X^{B,n}_t, \Lambda^{B,n}_t)(f,a)| & \leq & C n^{-1} (\|f\|_\infty+|a|),\\
|P^{B,(n)}(X^{B,n}_t, \Lambda_t^{B,n})(f,a)| &\leq & Cn^{-1}(\|f\|_\infty+|a|)^2, \label{eq:ineq3}
\end{eqnarray}
where $L^B$ is defined in expression \eqref{eq:auxiliary_equation_existence_proof}.

Hence by the same argument as in Theorem \ref{th:hydrodynamic_limit}, the laws of the sequence $(X^{B,n}, \Lambda^{B,n})$ are tight in $D([0,\infty), \mathcal{M}_B\times \R)$ (inequality \eqref{eq:ineq3} is the analogous to \eqref{eq:alpha_previsible}; the inequality \eqref{eq:ineq1} is analogous to \eqref{eq:4bound_square_operator}).

Similarly, the laws of the sequence $(X^{B,n}, \Lambda^{B,n}, I^n, J^n)$ are tight in $D([0,\infty), \mathcal{M}_B \times \R\times \mathcal{M}_{B\times B\times B}\times \mathcal{M}_{B\times B\times B})$, where
\begin{eqnarray*}
I^n_t(d\o_1,d\o_2,d\o_3) &=& \K \mathbb{1}_{\om\in B} X^{B,n}_t(d\o_1) X^{B,n}_t(d\o_2) X^{B,n}_t(d\o_3),\\
J^n_t(d\o_1,d\o_2,d\o_3) &=& \K \mathbb{1}_{\om\notin B} X^{B,n}_t(d\o_1) X^{B,n}_t(d\o_2) X^{B,n}_t(d\o_3).
\end{eqnarray*}

Let $(X,\Lambda, I, J)$ some weak limit point of the sequence. Passing to a subsequence and using the Skorokhod representation theorem \ref{th:Skorokhod_representation_theorem}, we can consider that the sequence converges almost surely, i.e., as a pointwise limit in $D([0,\infty), \mathcal{M}_B \times \R\times \mathcal{M}_{B\times B\times B}\times \mathcal{M}_{B\times B\times B})$. Therefore, there exist bounded measurable functions 
$$I, J:[0,\infty)\times B\times B\times B \to [0,\infty)$$
symmetric in the first two components, such that
\begin{eqnarray*}
I_t(d\o_1, d\o_2, d\o_3)  &=& I(t, \o_1, \o_2, \o_3) X_t(d\o_1) X_t(d\o_2) X_t(d\o_3)\\
J_t(d\o_1, d\o_2, d\o_3)  &=& J(t, \o_1, \o_2, \o_3) X_t(d\o_1) X_t(d\o_2) X_t(d\o_3)
\end{eqnarray*}
in $\mathcal{M}_{B\times B \times B}$ and such that
\begin{eqnarray*}
I(t,\o_1,\o_2,\o_3) &=& \K \mathbb{1}_{\om \in B}\\
J(t,\o_1,\o_2,\o_3) &=& \K \mathbb{1}_{\om \notin B}
\end{eqnarray*}
whenever $\om\notin \partial B$ (notice that we assumed $K$ to be continuous). 

Now, passing to the limit in \eqref{eq:martingale2} we obtain, for all continuous functions $f$ and all $a\in \R$, for all $t\geq 0$, almost surely
\begin{eqnarray*}
\la (f,a), (X_t, \Lambda_t)\ra &=&\la (f,a), (X_0,\Lambda_0) \ra\\
&&+ \frac{1}{2} \int^t_0 \int_{\R^3_+} \big( f(\om)+f(\o_3)-f(\o_1)-f(\o_2) \big)\\
&& \qquad\times I(s, \o_1, \o_2,\o_3)  X_s(d\o_1) X_s(d\o_2) X_s(d\o_3)\, ds\\
&&+ \frac{1}{2}\int^t_0 \int_{\R^3_+} \lp a\varphi(\om)+f(\o_3)-f(\o_1)-f(\o_2) \rp\\
&& \qquad \times J(s, \o_1, \o_2, \o_3)  X_s(d\o_1)  X_s(d\o_2)  X_s(d\o_3)\, ds\\
&& + \int^t_0 \lp \Lambda_s^2+2\Lambda_s\la \varphi, X_s\ra  \rp\int_{\R_+} \lp a\varphi(\o)-f(\o) \rp \varphi(\o) X_s(d\o)\, ds.
\end{eqnarray*}

\medskip

Consider now an analogous iterative scheme to the one done in Proposition \ref{prop:existence_auxiliary_process} for this equation. Denote by $(\nu_t^n)_{n\in\NN}$ the sequence approximating $(X_t)_{t\geq 0}$. We deduce that
$$\nu^0_t=\mu_0, \quad\nu^{n+1}_t \ll \mu_0 + \int^t_0 (\nu^n_s + \nu^n_s * \nu^n_s *\hat\nu^n_s) \, ds$$
for $\hat\nu(A)=\nu(-A)$ and for all $n \geq 0$, (notice that we have extended the measures in the previous expression to the whole $\R$ by taking value 0 in subsets of $(-\infty, 0)$)\footnote{$$\la f, \nu * \nu *\hat\nu \ra = \int_{\R^3} f(\om) \nu(d\o_1)\nu(d\o_2)\nu(d\o_3)$$ }.

 By induction we have that
$$\nu^n_t \ll \gamma_0=\sum^\infty_{k=1}\sum^\infty_{l=0}\nu_0^{*k}*\hat\nu_0^{*l}.$$
This implies in our case (taking $n\to \infty$) that $X_t \otimes X_t \otimes X_t$ is absolutely continuous with respect to $\gamma_0^{\otimes 3}$
for all $t\geq 0$, almost surely. For $G=\{(\o_1,\o_2,\o_3)\, |\, \om\in \partial B\}$, we have that $\gamma_0^{\otimes 3}(G)=0$ because of the assumptions on $\mu_0$ and that $\gamma_0^{\otimes 3}(G) = (\gamma_0 * \gamma_0 *\hat{\gamma_0})(G)$.

Therefore we can replace $I(t, \o_1, \o_2,\o_3)$ by $\K\mathbb{1}_{\om \in B}$ and $J(t,\o_1,\o_2,\o_3)$ by $\K \mathbb{1}_{\om\notin B}$. Since the equation obtained after this substitution is the same as \eqref{eq:auxiliary_equation_existence_proof} and $(\mu^B_t, \lambda^B_t)$ is its unique solution, we conclude that the unique weak limit point of $(X^{B,n},\Lambda^{B,n})$ in $D([0,\infty), \mathcal{M}_B \times \R)$ is precisely $(\mu^B_t, \lambda^B_t)_{t\geq 0}$.

\end{proof}

The proof of Theorem \ref{th:mean-field-limit-complete} is exactly the same one as in \cite[Theorem 4.4]{norris1999smoluchowski} and we copy it here just for the sake of completeness.
\begin{proof}[Proof of Theorem \ref{th:mean-field-limit-complete}, from \cite{norris1999smoluchowski}]
Fix $\delta>0$ and $t<T$. Since $(\mu_t)_{t<T}$ is strong, we can find a compact set $B$ satisfying $\mu^{*n}_0(\partial B)=0$(\footnote{The reason for this being true is that, for any given $\mu_0$, $\mu^{*n}_0(\partial B)=0$ for all $n\geq 1$ holds for all but countably many closed intervals in $\R_+$.}) for all $n\geq 1$ and such that $\lambda^B_t<\delta/2$. Now
$$d(\varphi X^n_0, \varphi \mu_0) \to 0,$$
so
$$d(X^{B,n}_0,\mu^B_0)\to 0, \quad |\Lambda^{B,n}_0 - \lambda^B_0|\to 0.$$
Hence, by Proposition \ref{prop:comparison_auxiliary_problem_stochastic},
$$\sup_{s\leq t} d(X^{B,n}_s, \mu^B_s) \to 0, \quad \sup_{s\leq t}|\Lambda^{B,n}_s-\lambda^B_s|\to 0,$$
in probability as $n\to \infty$. Since $\{\mu^{B}_s:\, s\leq t\}$ is compact (the support of $\mu_s$ is contained in $B$)(\footnote{Remember the definition of the metric $d$ given in \eqref{eq:definition_metric_weak_convergence}. Since $d(X^{B,n}_s, \mu^B_s) \to 0$, we have that for all $f$ bounded continuous function on $\R_+$
$$\int f \varphi(X^{B,n}_s-\mu^B_s) =\int f \varphi\mathbb{1}_B (X^{B,n}_s-\mu^B_s) \to 0$$
since $\varphi$ restricted to $B$ is also bounded and continuous.
}), we also have 
$$\sup_{s\leq t} d(\varphi X^{B,n}_s, \varphi\mu^{B}_s) \to 0$$
in probability as $n\to \infty$. By \eqref{eq:bounds_compare_sol_aux_sol} and by the bounds on the instantaneous coagulation-fragmentation particle system \eqref{eq:stochastic_turbulent_bounds}, we have that for $s\leq t$
\begin{eqnarray*}
\|\varphi(\mu_s-\mu_s^B) \| &=& \la \varphi, \mu_s-\mu_s^B \ra \leq \lambda^B_s \leq \lambda^B_t <\delta/2\\
\|\varphi(X^n_s-X^{B,n}_s)\| &=& \la \varphi, X^n_s-X^{B,n}_s \ra \leq \Lambda^{B,n}_s \leq \Lambda^{B,n}_t \\
&\leq & \lambda^B_t + |\Lambda^{B,n}_t-\lambda^B_t|\\
&\leq & \delta/2 + |\Lambda^{B,n}_t - \lambda^B_t|.
\end{eqnarray*}
Now (remember the properties of the metric $d$ defined in \eqref{eq:definition_metric_weak_convergence})
\begin{eqnarray*}
d(\varphi X^n_s, \varphi \mu_s) &\leq & \|\varphi(X^n_s - X^{B,n}_s)\| + d(\varphi X^{B,n}_s, \varphi \mu^B_s) + \|\varphi(\mu_s-\mu^B_s)\|\\
&\leq & \delta + d(\varphi X^{B,n}_s, \varphi \mu^B_s) + |\Lambda^{B,n}_t -\lambda^B_t|,
\end{eqnarray*}
so
$$\mathbb{P}\lp\sup_{s\leq t} d(\varphi X^n_s, \varphi \mu_s)>\delta \rp \to 0
$$
as $n\to\infty$, as required.
\end{proof}


\section{Appendix: Some properties of the Skorokhod space}
\begin{theorem}[Prohorov's theorem (\cite{ethier2009markov}), Chapter 3]
Let $(S,d)$ be complete and separable, and let $\mathcal{M} \in \P(S)$. Then the following are equivalent:
\begin{enumerate}
	\item $\mathcal{M}$ is tight.
	\item For each $\eps>0$, there exists a compact $K\in S$ such that
	$$\inf_{P\in \mathcal{M}}P(K^\eps) \geq 1-\eps$$
	where $K^\eps:= \{x\in S: \inf_{y\in K} d(x,y)<\eps\}$.
	\item $\mathcal{M}$ is relatively compact.
\end{enumerate}
\end{theorem}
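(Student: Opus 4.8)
The plan is to establish the cycle $(1)\Rightarrow(2)\Rightarrow(1)$ together with $(1)\Leftrightarrow(3)$, using completeness of $S$ in the step $(2)\Rightarrow(1)$ and separability of $S$ in the converse direction $(3)\Rightarrow(1)$. The implication $(1)\Rightarrow(2)$ is immediate from $K\subseteq K^\eps$, so the real content lies in the remaining three implications.

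For $(2)\Rightarrow(1)$ I would first isolate the standard criterion of total boundedness in measure: in a complete separable metric space, $\mathcal{M}$ is tight if and only if for every $\eps>0$ and $\delta>0$ there exist finitely many open balls of radius $\delta$ whose union has measure at least $1-\eps$ under every $P\in\mathcal{M}$. The nontrivial half is proved by selecting, for each $n$, a finite union $A_n$ of balls of radius $1/n$ with $\inf_{P}P(A_n)\ge 1-\eps 2^{-n}$ and setting $K=\overline{\bigcap_n A_n}$: this $K$ is closed and totally bounded, hence compact by completeness, and $P(K)\ge 1-\eps$ for all $P$. Granting $(2)$, for given $\eps,\delta$ one applies it with parameter $\eta=\min(\eps,\delta)$ to get a compact $K$ with $\inf_P P(K^\eta)\ge 1-\eta$; covering the (totally bounded) set $K$ by finitely many balls of radius $\eta$ centred in $K$ exhibits $K^\eta$ inside finitely many balls of radius $2\eta\le 2\delta$, which after halving $\delta$ is exactly the criterion, so $(1)$ holds.

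For $(1)\Rightarrow(3)$ I would use that a separable metric space $S$ embeds homeomorphically into a compact metric space $\hat S$ (the closure of its image in the Hilbert cube). Since $C(\hat S)$ is separable, $\P(\hat S)$ is compact and metrizable for the weak topology, so a sequence $(P_n)\subseteq\mathcal{M}$, regarded as measures on $\hat S$, has a subsequence $P_{n_k}\to\mu$ weakly in $\P(\hat S)$. Tightness supplies, for each $j$, a set $K_j\subseteq S$ compact (hence closed) in $\hat S$ with $P_n(K_j)\ge 1-1/j$ for all $n$; the portmanteau inequality for closed sets gives $\mu(K_j)\ge 1-1/j$, so $\mu$ is carried by the $\sigma$-compact set $\bigcup_j K_j\subseteq S$ and defines an element of $\P(S)$. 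Transferring $P_{n_k}\to\mu$ from $\P(\hat S)$ to $\P(S)$ is a routine cutoff: given $f\in C_b(S)$ and $\eps>0$, pick a compact $K\subseteq S$ with $P_{n_k}(K),\mu(K)\ge 1-\eps$ for all $k$, extend $f|_K$ to $\tilde f\in C_b(\hat S)$ with the same sup-norm by Tietze, and estimate $|\int f\,dP_{n_k}-\int\tilde f\,dP_{n_k}|\le 2\|f\|_\infty\eps$ and similarly for $\mu$. Since weak convergence on $\P(S)$ is metrizable (by the L\'{e}vy--Prokhorov metric), sequential relative compactness is relative compactness, which is $(3)$.

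Finally, for $(3)\Rightarrow(1)$ --- the genuinely separability-dependent direction, which I expect to be the main obstacle --- I would argue by contradiction with a dense sequence $\{x_i\}\subseteq S$. If the ball-covering criterion fails for some $\eps,\delta>0$, then for each $n$ there is $P_n\in\mathcal{M}$ with $P_n\big(\bigcup_{i\le n}B(x_i,\delta)\big)<1-\eps$. By relative compactness and metrizability, a subsequence $P_{n_k}\to\mu$ weakly; the open sets $U_m=\bigcup_{i\le m}B(x_i,\delta)$ increase to $S$, so $\mu(U_m)>1-\eps$ for some $m$, and the portmanteau inequality for open sets gives $\liminf_k P_{n_k}(U_m)\ge\mu(U_m)>1-\eps$, contradicting $P_{n_k}(U_m)\le P_{n_k}(U_{n_k})<1-\eps$ for $n_k\ge m$. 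Hence the criterion holds, and with completeness (via the lemma above) $\mathcal{M}$ is tight, so $(1)$ follows. The only delicate ingredients in the whole argument are the two topological facts invoked (compactness and metrizability of $\P(\hat S)$, and metrizability of weak convergence on $\P(S)$) and the transfer of weak limits between $\hat S$ and $S$; the rest is portmanteau bookkeeping.
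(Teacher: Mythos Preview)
Your argument is a correct and well-organised sketch of the standard proof of Prokhorov's theorem. However, there is nothing to compare it against: in the paper this theorem appears in the appendix purely as a quoted result from \cite{ethier2009markov}, stated without proof and used only as a tool (in Proposition~\ref{prop:almost_sure_convergence_for_measures}) to pass from tightness to relative compactness. The paper makes no attempt to prove it, so your proposal neither agrees with nor differs from the paper's approach --- it simply supplies what the paper deliberately omits by citation.
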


Let $(E,r)$ be a metric space. The space $D([0,\infty); E)$ of cadlag functions taking values in $E$ is widely used in stochastic processes. In general we would like to study the convergence of measures on this space, however, most of the tools known for convergence of measures are for measures in $\P(S)$ for $S$ a complete separable metric space. Therefore, it would be very useful to find a topology in $D([0,\infty) \times E)$ such that it is a complete and separable metric space. This can be done when $E$ is also complete and separable; and the metric considered is the Skorokhod one. This is why in this case the space of c\`adl\`ag functions is called Skorohod space. 

Some important properties of this space are the following:
\begin{proposition}[\cite{ethier2009markov}, Chapter 3]
If $x\in D([0,\infty); E)$, then $x$ has at most countably many points of discontinuity.
\end{proposition}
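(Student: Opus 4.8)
The plan is to use right-continuity together with the existence of left limits to show that, on each compact time interval, the points at which $x$ jumps by at least a fixed positive amount are finite in number, and then to write the full discontinuity set as a countable union of such finite sets.

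First I would fix $N, n \in \NN$ and set
$$A_{N,n} = \{\, t \in (0, N] \, : \, r(x(t), x(t-)) \geq 1/n \,\},$$
where $x(t-) = \lim_{s \uparrow t} x(s)$ exists for every $t > 0$ by the definition of $D([0,\infty); E)$; since $x$ is right-continuous, $t$ is a discontinuity point of $x$ precisely when $r(x(t), x(t-)) > 0$. The claim is that each $A_{N,n}$ is finite. If it were infinite, then, being a subset of the compact interval $[0, N]$, it would have a limit point $s \in [0, N]$, so there would be distinct points $t_k \in A_{N,n}$ with $t_k \to s$; discarding at most one term and passing to a subsequence, we may assume that either $t_k \downarrow s$ with $t_k > s$ for all $k$, or $t_k \uparrow s$ with $t_k < s$ for all $k$ (only the former can occur if $s = 0$).

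In the first case, fix $\eps \in (0, 1/(2n))$. By right-continuity of $x$ at $s$ there is $\delta > 0$ with $r(x(u), x(s)) < \eps$ for every $u \in (s, s+\delta)$. For any $t_k \in (s, s+\delta)$ we then have $r(x(t_k), x(s)) < \eps$, while $r(x(t_k -), x(s)) \leq \eps$ as well, since $x(t_k-)$ is a limit of $x(u)$ over $u \in (s, t_k) \subset (s, s+\delta)$; hence $r(x(t_k), x(t_k-)) \leq 2\eps < 1/n$, contradicting $t_k \in A_{N,n}$. The second case is identical with the left limit $x(s-)$ playing the role of $x(s)$: fix $\eps \in (0, 1/(2n))$, use the existence of $x(s-)$ to find $\delta > 0$ with $r(x(u), x(s-)) < \eps$ for $u \in (s - \delta, s)$, and then for $t_k \in (s-\delta, s)$ both $x(t_k)$ and $x(t_k-)$ lie within $\eps$ of $x(s-)$, so again $r(x(t_k), x(t_k-)) < 1/n$, a contradiction. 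Therefore $A_{N,n}$ has no limit point in $[0,N]$ and is finite.

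Finally, the set of discontinuity points of $x$ equals $\bigcup_{N \geq 1} \bigcup_{n \geq 1} A_{N,n}$ (possibly together with the single point $0$, which is irrelevant for cardinality), a countable union of finite sets, hence at most countable. I do not expect any genuine obstacle here; the only step requiring a little care is the control of $x(t_k-)$ in each case — namely, observing that it is a limit over arguments lying in the small interval already pinned down by right-continuity (respectively by the left limit at $s$), which is exactly what forces the jump at $t_k$ to be small.
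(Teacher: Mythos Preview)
Your argument is correct and is the standard proof of this fact. Note, however, that the paper does not supply its own proof of this proposition: it is quoted in the appendix as a known result from \cite{ethier2009markov}, Chapter~3, with no argument given. So there is nothing in the paper to compare your approach against; you have simply filled in what the paper leaves to the reference, and done so correctly.
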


\begin{theorem}[\cite{ethier2009markov}, Chapter 3]
If $E$ is separable, then $D([0,\infty); E)$ is separable. If $(E,r)$ is complete, then $(D([0,\infty);E), d)$ is complete, where $d$ is the Skorokhod metric. 

\end{theorem}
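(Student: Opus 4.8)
The plan is to prove the two assertions separately. For both, it is convenient to keep in mind that the Skorokhod metric $d$ on $D([0,\infty);E)$ is, up to topological equivalence, assembled from the metrics on $D([0,k];E)$, $k\in\NN$, with an exponential discount in $k$; consequently separability on the half-line reduces to separability on compact intervals, while completeness is most cleanly argued directly on $[0,\infty)$ using increasing time changes of the half-line.

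For separability, fix a countable dense set $\{e_i\}_{i\in\NN}\subset E$. Let $\SS$ be the countable family of step functions $[0,T]\to E$ that are constant on intervals $[q_{j-1},q_j)$ with $0=q_0<\cdots<q_m=T$ rational and take values among the $e_i$. The claim is that $\SS$ is dense. Given $x\in D([0,T];E)$ and $\eps>0$, the basic structural lemma on c\`adl\`ag paths --- for each $\eps$ there is a partition $0=t_0<\cdots<t_n=T$ with $\sup_{s,t\in[t_{j-1},t_j)}r(x(s),x(t))<\eps$, equivalently only finitely many jumps of size $\geq\eps$ --- lets one approximate $x$ uniformly within $\eps$ by the step function taking the value $x(t_{j-1})$ on $[t_{j-1},t_j)$; replacing each value $x(t_{j-1})$ by a nearby $e_i$ and each $t_j$ by a nearby rational perturbs this by $O(\eps)$ in $d$ (the rational time shift being absorbed by a piecewise-linear time change with small distortion). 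Since uniform proximity implies $d$-proximity (take the identity time change), $\SS$ is dense; hence $D([0,T];E)$, and then $D([0,\infty);E)$, is separable.

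For completeness, let $(x_n)$ be $d$-Cauchy in $D([0,\infty);E)$ and pass to a subsequence with $d(x_{n_k},x_{n_{k+1}})\leq 2^{-k}$. Write $\Lambda$ for the strictly increasing continuous bijections of $[0,\infty)$ fixing $0$, and for $\lambda\in\Lambda$ put $\gamma(\lambda)=\sup_{s>t\geq 0}\bigl|\log\tfrac{\lambda(s)-\lambda(t)}{s-t}\bigr|$. For each $k$ choose $\lambda_k\in\Lambda$ with $\gamma(\lambda_k)\leq 2^{-k}$ and $r(x_{n_{k+1}}(\lambda_k(t)),x_{n_k}(t))$ small uniformly on compacts; this is possible from the definition of $d$. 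Set $\sigma_k=\lambda_{k-1}\circ\cdots\circ\lambda_1$ with $\sigma_1=\mathrm{id}$. Subadditivity $\gamma(\mu\circ\lambda)\leq\gamma(\mu)+\gamma(\lambda)$ gives $\gamma(\sigma_k)\leq\sum_{j<k}2^{-j}<1$, so the $\sigma_k$ are equicontinuous on compacts; and since $\sigma_{k+1}=\lambda_k\circ\sigma_k$ with $\gamma(\lambda_k)\to 0$ forces $\sigma_{k+1}$ uniformly close to $\sigma_k$ on compacts, $(\sigma_k)$ converges uniformly on compacts to some $\sigma\in\Lambda$, with $\gamma(\sigma\circ\sigma_k^{-1})\leq\sum_{j\geq k}2^{-j}\to 0$. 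Meanwhile $y_k:=x_{n_k}\circ\sigma_k$ obeys $\sup_t r(y_{k+1}(t),y_k(t))=\sup_t r\bigl(x_{n_{k+1}}(\lambda_k\sigma_k(t)),x_{n_k}(\sigma_k(t))\bigr)$, which is summable on each compact, so $(y_k)$ converges uniformly on compacts to a limit $y$; a uniform limit of c\`adl\`ag functions is c\`adl\`ag, so $y\in D([0,\infty);E)$. Setting $x:=y\circ\sigma^{-1}$, the time change $\sigma_k\circ\sigma^{-1}$ has distortion tending to $0$ and makes $x_{n_k}=y_k\circ\sigma_k^{-1}$ uniformly close to $x$ on compacts, so $d(x_{n_k},x)\to 0$, and Cauchyness upgrades this to $x_n\to x$. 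Hence $(D([0,\infty);E),d)$ is complete.

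The main obstacle is the completeness part, where two points require care. First, one must work with the Billingsley--Ethier--Kurtz metric built from the distortion $\gamma(\lambda)$ rather than the na\"ive one using $\|\lambda-\mathrm{id}\|_\infty$: the latter induces the same topology but fails to be complete, which is precisely why the $\gamma$-based metric is the one named in the statement. Second, on the non-compact time axis one must verify that the infinite composition $\sigma=\lim_k\lambda_{k-1}\circ\cdots\circ\lambda_1$ genuinely exists in $\Lambda$ with a well-behaved inverse; the identities $\gamma(\mu\circ\lambda)\leq\gamma(\mu)+\gamma(\lambda)$ and $\gamma(\lambda^{-1})=\gamma(\lambda)$ are exactly what make the composition converge and keep it inside $\Lambda$. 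The remaining ingredients --- the reduction to compact intervals, ``uniform implies Skorokhod'', and ``uniform limits of c\`adl\`ag paths are c\`adl\`ag'' --- are routine.
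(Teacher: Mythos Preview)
The paper does not supply its own proof of this theorem: it is simply quoted from Ethier--Kurtz as background in the appendix, with no argument given. So there is nothing to compare against on the paper's side.

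Your sketch is essentially the standard proof one finds in Ethier--Kurtz (Chapter~3). The separability argument via step functions with rational breakpoints and values in a countable dense set is the usual one, and the completeness argument---composing time changes $\sigma_k=\lambda_{k-1}\circ\cdots\circ\lambda_1$, exploiting subadditivity of the log-Lipschitz distortion $\gamma$, and showing the reparametrised paths $y_k=x_{n_k}\circ\sigma_k$ form a uniform Cauchy sequence---is exactly the Billingsley--Ethier--Kurtz route. Your remarks at the end about why the $\gamma$-based metric (and not $\|\lambda-\mathrm{id}\|_\infty$) is needed for completeness, and about the identities $\gamma(\mu\circ\lambda)\leq\gamma(\mu)+\gamma(\lambda)$ and $\gamma(\lambda^{-1})=\gamma(\lambda)$, are well placed.

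One small point to tighten if you write this out in full: the passage ``choose $\lambda_k\in\Lambda$ with $\gamma(\lambda_k)\leq 2^{-k}$ and $r(x_{n_{k+1}}(\lambda_k(t)),x_{n_k}(t))$ small uniformly on compacts; this is possible from the definition of $d$'' glosses over the precise form of the Ethier--Kurtz metric on $[0,\infty)$, which involves an integral $\int_0^\infty e^{-u}(\cdot)\,du$ of truncated suprema rather than a single uniform bound. Extracting a $\lambda_k$ that works simultaneously on all compacts with summable error takes a short additional argument (or one first works on $[0,k]$ and patches). Similarly, checking that the limit $\sigma$ really lies in $\Lambda$ (strictly increasing, onto $[0,\infty)$) deserves a line, though it does follow from $\gamma(\sigma)<\infty$ as you indicate. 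These are routine fills, not gaps in the strategy.
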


\begin{theorem}
The Skorokhod space is a complete separable metric space.
\end{theorem}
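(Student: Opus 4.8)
The plan is to read this statement as a corollary of the preceding theorem, applied to the concrete Skorokhod space used throughout the paper, namely $D([0,\infty);\P(\R_+))$. Since the preceding theorem reduces completeness and separability of $D([0,\infty);E)$ to completeness and separability of the underlying space $E$ (together with the fact that $d$ there denotes the Skorokhod metric), it suffices to verify that $E=\P(\R_+)$, equipped with a metric compatible with the topology of weak convergence (for instance the L\'evy--Prokhorov metric, or the metric $d$ introduced around \eqref{eq:definition_metric_weak_convergence}), is a complete separable metric space.

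First I would recall that $\R_+=[0,\infty)$ with its usual metric is complete and separable. Next I would invoke the classical fact (see e.g.\ \cite{billingsley2013convergence,ethier2009markov}) that whenever $(S,\rho)$ is a complete separable metric space, $\P(S)$ endowed with the L\'evy--Prokhorov metric is again complete and separable, and that this metric metrises weak convergence: separability follows because the measures with finite, rationally weighted support on a countable dense subset of $S$ are dense in $\P(S)$, and completeness follows from Prokhorov's theorem via a tightness argument on Cauchy sequences of measures. Taking $S=\R_+$ yields that $\P(\R_+)$ is a Polish space.

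Finally I would apply the preceding theorem with $E=\P(\R_+)$: since $\P(\R_+)$ is separable, $D([0,\infty);\P(\R_+))$ is separable; and since $\P(\R_+)$ is complete, $D([0,\infty);\P(\R_+))$ is complete under the Skorokhod metric. Hence the Skorokhod space is a complete separable metric space.

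The only point requiring any care — and it is minor — is matching the metric $d$ used on $\P(\R_+)$ in the body of the text with a canonical complete metric for weak convergence; since all metrics compatible with weak convergence induce the same (Polish) topology this causes no difficulty, so I expect essentially no real obstacle beyond invoking the standard facts about spaces of probability measures on Polish spaces.
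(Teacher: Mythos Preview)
Your reading is correct: in the paper this theorem is stated without proof, immediately after the cited Ethier--Kurtz result, and is evidently meant as its direct consequence for the particular Skorokhod space $D([0,\infty);\P(\R_+))$ used throughout. Your proposal fills in exactly the step the paper leaves implicit --- namely that $\P(\R_+)$ with a metric for weak convergence is Polish --- so it matches the intended approach and in fact supplies more detail than the paper itself.
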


\begin{theorem}[The almost sure Skorokhod representation theorem, \cite{ethier2009markov}, Theorem 1.8, Chapter 3]
\label{th:Skorokhod_representation_theorem}
Let $(S,d)$ be a separable metric space. Suppose $P_n$, $n=1,2,\hdots$ and $P$  in $\P(S)$ satisfy $\lim_{n\rightarrow\infty}\rho(P_n, P)=0$ where $\rho$ is the metric in $\P(S)$. Then there exists a probability space $(\Omega, \mathcal{F}, \nu)$ on which are defined $S$- valued random variable $X_n$, $n=1,2, \hdots$ and $X$ with distributions $P_n$, $n=1,2,\hdots$ and $P$, respectively such that $\lim_{n\rightarrow \infty} X_n=X$ almost surely.
\end{theorem}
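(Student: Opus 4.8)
The plan is to realise all of the $P_n$ and $P$ on a single probability space by a nested quantile-coupling construction driven by finer and finer measurable partitions of $S$, and then to extract the almost sure convergence from a Borel--Cantelli argument. First I would use separability to build, for each $m\geq 1$, a countable Borel partition $\mathcal{B}_m=\{B_{m,i}\}_{i\geq 0}$ of $S$ with $\mathrm{diam}(B_{m,i})<1/m$ for $i\geq 1$, with each $B_{m,i}$ a $P$-continuity set ($P(\partial B_{m,i})=0$), with $\mathcal{B}_{m+1}$ refining $\mathcal{B}_m$, and with a ``remainder cell'' $B_{m,0}$ satisfying $P(B_{m,0})<2^{-m}$. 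These come from covering $S$ by countably many balls of radius $<1/(2m)$ having $P$-null boundaries (all but countably many radii about each centre work), disjointifying, intersecting with the previous level, and sweeping the cells of negligible $P$-mass into the remainder. Along the way record two elementary facts: since $\sum_m P(B_{m,0})<\infty$, by Borel--Cantelli $P$-a.e.\ point lies in $B_{m,0}$ for only finitely many $m$; and since each $B_{m,i}$ is a $P$-continuity set, the portmanteau theorem gives $P_n(B_{m,i})\to P(B_{m,i})$ for every fixed $m,i$, whence Scheff\'e's lemma yields $\sum_i|P_n(B_{m,i})-P(B_{m,i})|\to 0$ as $n\to\infty$ for each fixed $m$.

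On the common space take $\Omega=[0,1)$ with Lebesgue measure $\nu$, which carries both the identity variable $U(\omega)=\omega$ and, via alternate binary digits, an independent uniform variable $V$ used to absorb the mismatch between $P_n$ and $P$. I would first define $X$: recursively partition $[0,1)$ into consecutive intervals of lengths $P(B_{m,i})$, nested in accordance with the refinements, so that for $\nu$-a.e.\ $\omega$ the cells containing the nested interval through $\omega$ have diameters $\to 0$, which determines $X(\omega)$, with $X\sim P$ by construction. Then I would define $X_n$ by the same recipe applied to $P_n$, but choosing each level's interval decomposition to overlap that of $X$ as much as possible: on the common part $X_n$ is sent into the same cell as $X$, and on the ``mismatch'' part — whose length at level $m$ is controlled by $\sum_i|P_n(B_{m,i})-P(B_{m,i})|$ plus the leftover from lower levels — the variable $V$ re-randomises so that $X_n$ has \emph{exactly} the law $P_n$. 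Writing $E_{m,n}=\{X_n,X\text{ lie in different cells of }\mathcal{B}_m\}$, refinement makes $E_{m,n}$ increasing in $m$, and the construction gives $\nu(E_{m,n})\leq\sum_{m'\leq m}(\text{level-}m'\text{ mismatch mass})\to 0$ as $n\to\infty$ for each fixed $m$.

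For the conclusion, fix for each $m$ an index $N(m)$, strictly increasing in $m$, with $\nu(E_{m,n})\leq 2^{-n}$ for all $n\geq N(m)$ — possible precisely because $\nu(E_{m,n})\to 0$ as $n\to\infty$ — and set $m(n)=\max\{m:N(m)\leq n\}$, so $m(n)\uparrow\infty$ and $\sum_n\nu(E_{m(n),n})\leq\sum_n 2^{-n}<\infty$. By Borel--Cantelli, for $\nu$-a.e.\ $\omega$ one has $\omega\notin E_{m(n),n}$ for all large $n$, i.e.\ $X_n(\omega)$ and $X(\omega)$ share a cell of $\mathcal{B}_{m(n)}$; intersecting with the a.e.\ event that $X(\omega)\notin B_{m,0}$ for all large $m$ (first paragraph), that shared cell has diameter $<1/m(n)$ for $n$ large, so $d(X_n(\omega),X(\omega))<1/m(n)\to 0$. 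Hence $X_n\to X$ $\nu$-almost surely, while $X_n\sim P_n$ and $X\sim P$, as required.

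The main obstacle is the bookkeeping in the nested construction of the $X_n$: one must specify, simultaneously for all levels $m$ and all $n$, interval decompositions that respect the refinement across $m$, reproduce the law $P_n$ exactly, and keep the cumulative mismatch mass $\nu(E_{m,n})$ tending to $0$ as $n\to\infty$ for each fixed $m$, all while only finitely many cells at a time carry appreciable mass and the rest is swept into the vanishing remainder. The attendant technical points — the abundance of $P$-continuity sets, the use of the independent auxiliary variable $V$ to correct marginals without disturbing the matched part, and (if $S$ is merely separable rather than complete) checking that the nested-cell intersection genuinely produces measurable maps with the prescribed laws — are routine once this scaffolding is in place.
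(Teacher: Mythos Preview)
The paper does not give a proof of this statement: it is merely quoted in the appendix as a standard result from Ethier and Kurtz, and is used only as a tool in the mean-field limit arguments. So there is no ``paper's own proof'' to compare against.

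That said, your outline is essentially the classical Ethier--Kurtz construction and is correct in its architecture: nested countable Borel partitions of $S$ into $P$-continuity sets of shrinking diameter, portmanteau plus Scheff\'e to control the total mismatch at each level, a quantile-type coupling on $([0,1),\mathrm{Leb})$ with an auxiliary independent uniform to fix the marginals exactly, and Borel--Cantelli to upgrade the level-wise matching to almost sure convergence. The one point worth tightening is the last caveat you yourself flag: when $(S,d)$ is only separable (not complete), the nested intersection of cells need not single out a point of $S$, so the map $X$ is defined only $\nu$-a.e.\ via the shrinking-diameter property of the non-remainder cells, and measurability has to be checked on that full-measure set; this is handled in the reference but deserves an explicit sentence if you write it out.
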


\begin{theorem}[Tightness criteria for measures on the Skorokhod space, \cite{jakubowski1986skorokhod} Theorem 3.1]
\label{th:jakubowski_criteria}
Let $(S,\mathcal{T})$ be a completely regular topological space with metrisable compact sets. Let $\mathbb{G}$ be a family of continuous functions on $S$. Suppose that $\mathbb{G}$ separates points in $S$ and that it is closed under addition. Then a family $\{ \mathcal{L}^n\}_{n\in \NN}$ of probability measures in $\P(D([0,\infty);S)$ is tight iff  the two following conditions hold:
\begin{itemize}
	\item[(i)] For each $\eps>0$ there is a compact set $K_\eps \subset S$ such that
	$$\mathcal{L}^n(D([0,\infty);K_\eps))>1-\eps, \quad n\in \NN.$$
	\item[(ii)] The family $\{ \mathcal{L}^n\}_{n\in\NN}$ is $\mathbb{G}$-weakly tight.
\end{itemize}
\end{theorem}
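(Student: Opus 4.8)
The plan is to prove the two implications separately, the forward one being essentially formal and the backward one being the substantive part. Throughout, for $g\in\mathbb{G}$ write $g_*\colon D([0,\infty);S)\to D([0,\infty);\R)$ for the induced continuous map $x\mapsto g\circ x$, and read ``$\mathbb{G}$-weakly tight'' as: $\{\mathcal{L}^n\circ g_*^{-1}\}_n$ is tight in $\P(D([0,\infty);\R))$ for every $g\in\mathbb{G}$.

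\emph{Necessity.} Suppose $\{\mathcal{L}^n\}$ is tight. Given $\eps>0$, Prohorov's theorem gives a compact $\mathcal{K}\subset D([0,\infty);S)$ with $\mathcal{L}^n(\mathcal{K})>1-\eps$ for all $n$. For (ii): $g_*$ is continuous, so $\mathcal{L}^n\circ g_*^{-1}$ sits on the compact $g_*(\mathcal{K})$ up to mass $\eps$, hence is tight. For (i): one uses the standard description of compact subsets of $D([0,\infty);S)$ — each $x\in\mathcal{K}$ has, on every finite interval, range in a compact subset of $S$, and compactness of $\mathcal{K}$ lets one choose these uniformly in $x$; taking closures of unions over a countable cofinal family of intervals and diagonalising produces the required $K_\eps$.

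\emph{Sufficiency.} Fix $\eps>0$ and let $K=K_\eps$ be the compact set from (i); being metrisable, $C(K)$ is separable. Since $\mathbb{G}$ separates points of $K$ and is closed under addition, choose a countable subfamily $g_1,g_2,\dots\in\mathbb{G}$ whose restrictions to $K$ still separate points, and such that finite sums of the $g_j$ together with the constants generate a subalgebra dense in $C(K)$ by Stone--Weierstrass. Composing each $g_j$ with a fixed homeomorphism $\R\to(-1,1)$, the map $\Phi=(g_j)_{j\ge1}\colon K\to[-1,1]^{\NN}$ is a continuous injection of a compact space, hence a homeomorphism onto the compact set $\Phi(K)$, uniformly continuous with uniformly continuous inverse; it therefore induces a homeomorphism $\Phi_*\colon D([0,\infty);K)\to D([0,\infty);\Phi(K))$. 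Restricting $\mathcal{L}^n$ to the event $\{X^n\in D([0,\infty);K)\}$, which by (i) has probability $>1-\eps$, and pushing forward by $\Phi_*$ yields probability measures $\tilde{\mathcal{L}}^n$ on $D([0,\infty);[-1,1]^{\NN})$, and it suffices to show these are uniformly tight there: pulling a compact witness back through $\Phi_*^{-1}$ gives a compact subset of $D([0,\infty);K)\subset D([0,\infty);S)$ of $\mathcal{L}^n$-mass $>1-\eps-\delta$ uniformly in $n$, and Prohorov finishes.

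\emph{Sufficiency, core step.} Compact containment of $\{\tilde{\mathcal{L}}^n\}$ is automatic, the trajectories living in the compact $\Phi(K)$. For the Skorokhod modulus I would invoke the ``addition lemma'': if real processes $Y^n$, $Z^n$, $Y^n+Z^n$ are each tight in $D([0,\infty);\R)$, then $(Y^n,Z^n)$ is tight in $D([0,\infty);\R^2)$. Since $\mathbb{G}$ is closed under addition, every finite sum $\sum_{j\in A}g_j$ again lies in $\mathbb{G}$, so by (ii) every such process $\big(\sum_{j\in A}g_j\big)(X^n)$ is tight; iterating the lemma gives, for each $k$, tightness of $(g_1(X^n),\dots,g_k(X^n))$ in $D([0,\infty);\R^k)$, i.e. tightness of the first $k$ coordinates of $\tilde{\mathcal{L}}^n$. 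A diagonal argument then assembles, for each $\delta>0$, a set relatively compact in $D([0,\infty);[-1,1]^{\NN})$ (using the coordinatewise characterisation of compactness there together with compact containment) carrying $\tilde{\mathcal{L}}^n$-mass $>1-\delta$ for all $n$, which is the witness needed above.

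\emph{Main obstacle.} The crux is the passage from one-dimensional tightness of each $g(X^n)$ to joint finite-dimensional tightness: the Skorokhod topology is not a product topology, so tightness of the coordinates need not imply tightness of the vector — near-simultaneous but non-coincident jumps can destroy it — and it is precisely closure of $\mathbb{G}$ under addition, feeding the addition lemma (whose proof rests on an Aldous/Billingsley-type modulus estimate), that rules this out. A secondary technicality is that $S$ is only assumed to have metrisable compact sets, not to be metrisable, so every metric argument (separability of $C(K)$, Stone--Weierstrass, the embedding into $[-1,1]^{\NN}$, functoriality of the Skorokhod topology under $\Phi$) must be confined to the compact sets $K_\eps$ rather than run on $S$.
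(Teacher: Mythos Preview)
The paper does not prove this theorem. It is quoted in the appendix as a known result from Jakubowski \cite{jakubowski1986skorokhod}, Theorem~3.1, and is simply used as a black box in the proof of Lemma~\ref{lem:tightness_for_the_measures}. There is therefore no ``paper's own proof'' to compare your proposal against.

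That said, your sketch is a faithful outline of Jakubowski's original argument: the reduction to a metrisable compact $K_\eps$ via condition~(i), the embedding of $K_\eps$ into $[-1,1]^{\NN}$ by a countable separating subfamily of $\mathbb{G}$, and above all the identification of the ``addition lemma'' (tightness of $Y^n$, $Z^n$, $Y^n+Z^n$ in $D([0,\infty);\R)$ implies tightness of $(Y^n,Z^n)$ in $D([0,\infty);\R^2)$) as the device that converts closure of $\mathbb{G}$ under addition into joint finite-dimensional tightness. You have also correctly flagged the genuine obstruction, namely that the Skorokhod topology does not respect products, so coordinatewise tightness alone is insufficient. If you want to turn this into a self-contained proof you would need to supply the addition lemma itself (an Aldous-type modulus argument) and be a bit more careful with the diagonalisation at the end, but the architecture is right.
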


\begin{theorem}[Criteria for tightness in Skorokhod spaces (\cite{ethier2009markov}, Corollary 7.4, Chapter 3)]
\label{th:criteria_tightness_Skorokhod}
Let $(E,r)$ be a complete and separable metric space, and let $\{X_n\}$ be a family of processes with sample paths in $D([0,\infty); E)$. Then $\{X_n\}$ is relatively compact iff the two following conditions hold:
\begin{itemize}
	\item[(i)]For every $\eta>0$ and rational $t\geq 0$, there exists a compact set $\Lambda_{\eta, t} \subset E$ such that
	$$\liminf_{n\rightarrow \infty}\mathbb{P}\{X_n(t) \in \Lambda^\eta_{\eta,t}\} \geq 1-\eta.$$
	\item[(ii)] For every $\eta>0$ and $T>0$, there exits $\delta>0$ such that
	$$\limsup_{n\rightarrow\infty} \mathbb{P}\{ w'(X_n, \delta, T) \geq \eta \} \leq \eta.$$
\end{itemize}
where we have used the \textbf{modulus of continuity} $w'$ defined as follows: for $x\in D([0,\infty)\times E)$, $\delta>0$, and $T>0$:
$$w'(x,\delta, T) = \inf_{\{t_i\}} \max_i \sup_{s,t \in [t_{i-1}, t_i)} r(x(s), x(t)),$$
where $\{t_i\}$ ranges over all partitions of the form $0=t_0<t_1<\hdots< t_{n-1}<T\leq t_n$ with $\min_{1\leq i\leq n}(t_i-t_{i-1})>\delta$ and $n\geq 1$
\end{theorem}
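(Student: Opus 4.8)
The plan is to deduce this statement from two classical ingredients: Prohorov's theorem (stated above) and an Arzel\`a--Ascoli-type characterization of relatively compact subsets of the Skorokhod space $D([0,\infty);E)$. Recall the latter: a set $A\subset D([0,\infty);E)$ has compact closure if and only if (a) for every $t$ in a fixed dense subset of $[0,\infty)$ the set $\{x(t):x\in A\}$ is relatively compact in $E$, and (b) $\lim_{\delta\downarrow 0}\sup_{x\in A}w'(x,\delta,T)=0$ for every $T>0$. Since $(E,r)$ is complete and separable, $D([0,\infty);E)$ with the Skorokhod metric is itself complete and separable, so by Prohorov's theorem $\{X_n\}$ is relatively compact iff the family of laws $\{\mathbb{P}\circ X_n^{-1}\}$ is tight, i.e.\ for every $\eps>0$ there is a compact $K\subset D([0,\infty);E)$ with $\liminf_n\mathbb{P}\{X_n\in K\}\ge 1-\eps$ (passing from $\liminf$ to $\inf$ is free, since each $X_n$ is a single $D([0,\infty);E)$-valued random variable and hence individually tight).

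For sufficiency, I would fix $\eps>0$, enumerate a dense set of times $\{t_i\}_{i\ge1}$ and the horizons $T=m\in\NN$, and combine the hypotheses to build such a $K$. Using (i), for each pair $(i,k)$ choose a compact $\Lambda_{i,k}\subset E$ with $\liminf_n\mathbb{P}\{X_n(t_i)\in\Lambda_{i,k}^{1/k}\}\ge 1-\eps 2^{-i-k}$; using (ii), for each pair $(m,k)$ choose $\delta_{m,k}>0$ with $\limsup_n\mathbb{P}\{w'(X_n,\delta_{m,k},m)\ge 1/k\}\le \eps 2^{-m-k}$. Set
$$K=\Big\{x\in D([0,\infty);E): x(t_i)\in\overline{\Lambda_{i,k}^{1/k}}\ \forall i,k;\ \ w'(x,\delta_{m,k},m)\le 1/k\ \forall m,k\Big\}.$$
Here $\overline{\Lambda_{i,k}^{1/k}}$ is compact because a $(1/k)$-neighbourhood of a compact set is totally bounded and $E$ is complete; hence $\bigcap_k\overline{\Lambda_{i,k}^{1/k}}$ is totally bounded and closed, so condition (a) holds for $K$; and since $w'(x,\cdot,m)$ is nondecreasing, for $\delta<\delta_{m,k}$ every $x\in K$ satisfies $w'(x,\delta,m)\le 1/k$, so condition (b) holds for $K$. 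Thus $\overline{K}$ is compact, and a union bound gives $\mathbb{P}\{X_n\notin K\}\le\sum_{i,k}\mathbb{P}\{X_n(t_i)\notin\Lambda_{i,k}^{1/k}\}+\sum_{m,k}\mathbb{P}\{w'(X_n,\delta_{m,k},m)>1/k\}$, so $\liminf_n\mathbb{P}\{X_n\in K\}\ge 1-C\eps$ after absorbing finitely many exceptional indices $n$ into a slightly larger compact set. This is tightness of the laws, hence relative compactness. For necessity I would run the argument in reverse: relative compactness gives, via Prohorov, a compact $K_\eps\subset D([0,\infty);E)$ with $\inf_n\mathbb{P}\{X_n\in K_\eps\}\ge 1-\eps$; applying the characterization to $K_\eps$ yields compact sets $\Gamma_{\eps,t}:=\overline{\{x(t):x\in K_\eps\}}$ with $\mathbb{P}\{X_n(t)\in\Gamma_{\eps,t}\}\ge 1-\eps$ for all $n$ (which gives (i), even without fattening, extending from the dense set to all rationals using (b)), while $\sup_{x\in K_\eps}w'(x,\delta,T)\to 0$ as $\delta\downarrow 0$ gives, for $\eps=\eta$ and $\delta$ small, $\mathbb{P}\{w'(X_n,\delta,T)\ge\eta\}\le\mathbb{P}\{X_n\notin K_\eta\}\le\eta$, which is (ii).

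I expect the main obstacle to be not any single estimate but the Skorokhod-space compactness characterization itself: the equivalence of compact closure with pointwise relative compactness together with decay of the modulus $w'$ is a genuine theorem (the Skorokhod analogue of Arzel\`a--Ascoli), whose proof needs care with the Skorokhod metric and the admissible time changes, and it carries the real content of Theorem~\ref{th:criteria_tightness_Skorokhod}. A secondary, bookkeeping-type difficulty is handling the $\liminf$/$\limsup$ quantifiers and the $\eta$-fattening in hypothesis (i) at once; this is exactly where one uses that an $\eta$-neighbourhood of a compact set has compact closure in a complete space, and that finitely many exceptional processes are each individually tight.
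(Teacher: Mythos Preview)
The paper does not prove this theorem at all: it is stated in the appendix purely as a citation of Corollary~7.4, Chapter~3 of Ethier--Kurtz, with no accompanying argument. So there is no ``paper's own proof'' to compare against.

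Your sketch is the standard route (Prohorov plus the Skorokhod-space Arzel\`a--Ascoli characterization) and is essentially the argument one finds in Ethier--Kurtz. The construction of $K$ and the union-bound bookkeeping are correct in outline; your own caveat is also the right one---the real content sits in the compactness characterization of $D([0,\infty);E)$, which you are importing rather than proving. One small point: your claim that $\overline{\Lambda_{i,k}^{1/k}}$ is compact is fine here (compact $\Rightarrow$ totally bounded $\Rightarrow$ its $1/k$-blowup is totally bounded $\Rightarrow$ its closure is compact in the complete space $E$), but it is worth saying explicitly since this is exactly where the $\eta$-fattening in hypothesis~(i) is being used.
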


\begin{theorem}[Continuity criteria for the limit in Skorokhod spaces (\cite{ethier2009markov}, Theorem 10.2, Chapter 3)]
\label{th:continuity_criteria_limit_Skorokhod_space}
Let $(E,r)$ be a metric space.
Let $X_n$, $n=1,2,\hdots,$ and $X$ be processes with sample paths in $D([0,\infty);E)$ and suppose that $X_n$ converges in distribution to $X$. Then $X$ is a.s. continuous if and only if $J(X_n)$ converges to zero in distribution, where
$$J(x) = \int^\infty_0 e^{-u} [ J(x,u) \wedge 1] \, du$$
for 
$$J(x,u) = \sup_{0\leq t \leq u} r(x(t), x(t-)).$$
\end{theorem}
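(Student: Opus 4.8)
The plan is to treat $J$ as a bounded, lower semicontinuous functional on the Skorokhod space and extract both implications from the Portmanteau theorem and the continuous mapping theorem. Write $D=D([0,\infty);E)$ and let $C\subset D$ be the set of continuous paths. The functional $J:D\to[0,1]$ is bounded and measurable, and since $J(x,u)=\sup_{0\le t\le u}r(x(t),x(t-))$ is nondecreasing in $u$, one has $J(x)=0$ if and only if $J(x,u)=0$ for all $u$, i.e.\ if and only if $x\in C$. So the entire statement reduces to relating the law of the nonnegative variable $J(X_n)$ to the event $\{X\in C\}=\{J(X)=0\}$.

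Everything rests on two facts about $J$ relative to the Skorokhod $J_1$ topology. First, \textbf{lower semicontinuity}: if $x_n\to x$ in $D$, then $J(x)\le\liminf_n J(x_n)$. Second, \textbf{continuity at continuous paths}: if $x_n\to x$ in $D$ with $x\in C$, then $J(x_n)\to 0$; in particular the discontinuity set of $J$ is contained in $D\setminus C$. Both are proved from the time-change description of convergence: $x_n\to x$ means there are increasing homeomorphisms $\lambda_n$ of $[0,\infty)$ with $\lambda_n\to\mathrm{id}$ and $x_n\circ\lambda_n\to x$, uniformly on compacts. For the second fact one invokes the standard consequence that a continuous limit forces genuine locally uniform convergence $x_n\to x$; uniform continuity of $x$ on compacts then makes the oscillation of $x_n$ over short intervals vanish, giving $J(x_n,u)\to0$ for each $u$ and $J(x_n)\to0$ by dominated convergence against $e^{-u}$. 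For the first fact one shows that a jump of $x$ of size $>\delta$ at a time $t\le u$ is matched, through $\lambda_n$, by a jump of $x_n$ of size $>\delta-\eta$ near $\lambda_n^{-1}(t)$ for all large $n$; letting $\delta$ increase to the largest jump of $x$ on $[0,u]$ and $\eta\downarrow0$ yields $\liminf_n J(x_n,u)\ge J(x,u)$ at every $u$ where $x$ is continuous, which is all but countably many $u$. Applying Fatou's lemma to the integral against $e^{-u}$ then gives the stated lower semicontinuity.

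Granting these two lemmas, both directions are immediate. For the implication from a.s.\ continuity of $X$ to $J(X_n)\to0$: if $X$ is a.s.\ continuous then $\mathbb{P}(X\in D\setminus C)=0$, so by the second lemma the discontinuity set of $J$ is null under the law of $X$, and the continuous mapping theorem (\cite{billingsley2013convergence}) applied to $X_n\Rightarrow X$ yields $J(X_n)\Rightarrow J(X)$; since $J(X)=0$ a.s., we get $J(X_n)\to0$ in distribution. For the converse: assume $J(X_n)\to0$ in distribution. By the first lemma $J$ is lower semicontinuous and bounded below, so the Portmanteau theorem gives $\liminf_n\mathbb{E}[J(X_n)]\ge\mathbb{E}[J(X)]$. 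As $0\le J\le1$ and $J(X_n)\Rightarrow0$, the left-hand side is $0$, whence $\mathbb{E}[J(X)]=0$; since $J\ge0$ this forces $J(X)=0$ a.s., i.e.\ $X$ is a.s.\ continuous.

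The main obstacle is concentrated entirely in the two topological lemmas, that is, in controlling how jumps transform under the time-change maps defining the Skorokhod topology. The delicate point in the lower-semicontinuity lemma is the endpoint $u$ of $J(\cdot,u)$: a jump of $x$ located exactly at $u$ can be displaced by $\lambda_n$ to just past $u$ and thus missed, which is precisely why the functional integrates $J(x,u)\wedge1$ against $e^{-u}\,du$ rather than evaluating at a single time; restricting the pointwise comparison to the co-countably many $u$ at which $x$ is continuous removes this issue before integration. Once the lemmas are in hand, the remaining content is only the Portmanteau and continuous mapping theorems, both standard for weak convergence on a general metric space and hence compatible with the fact that $(E,r)$ is assumed merely metric.
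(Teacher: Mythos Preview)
The paper does not actually prove this statement: it appears in the appendix as a quoted textbook result (Theorem~10.2, Chapter~3 of \cite{ethier2009markov}) with no argument supplied, so there is no ``paper's own proof'' to compare against. Your proposal is a correct and self-contained argument, and it is essentially the strategy used in the cited reference: isolate the two topological properties of $J$ (lower semicontinuity on $D$, and continuity at points of $C$), then read off one direction from the continuous mapping theorem and the other from the Portmanteau inequality for lower semicontinuous functionals.
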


	\section{Appendix: Formal derivation of the weak isotropic 4-wave kinetic equation}
		\label{sec:weak_isotropic_wave_eq}


Suppose that $n(\kvec)=n(k)$ is a radial function (isotropic).

The waveaction in the isotropic case can be written as
$$W= \int_{\R^N} n(\kvec) d\kvec = \int_{\R_+\times S^{N-1}} n(k) k^{N-1} dk d\s = \frac{|S^{N-1}|}{\alpha} \int^\infty_0 n(\omega) \o^{\frac{N-\alpha}{\alpha}} \, d\o ,$$
 where $S^{N-1}$ is the $N-1$ dimensional sphere. From this expression, one can denote the angle-averaged frequency spectrum $\mu=\mu(d\o)$ as 
 $$\mu(d\omega):=\frac{|S^{N-1}|}{\alpha} \o^{\frac{N-\alpha}{\alpha}}n(\o)d\o.$$
The total number of waves (waveaction) and the total energy are respectively
\begin{eqnarray*} 
W&=& \int^\infty_0 \mu(d\omega)\\
E &=& \int^\infty_0 \omega \mu(d\omega).
\end{eqnarray*}

The isotropic version of the weak 4-wave kinetic equation can be written as
\begin{equation} \label{eq:isotropic_4_wave_equation_weak2}
\mu_t = \mu_0 + \int^t_0 Q(\mu_s, \mu_s, \mu_s) \, ds
\end{equation}
where $Q$ is defined against test functions $g\in \mathcal{S}(\R_+)$ as
\begin{eqnarray} \label{eq:Q_isotropic_case}
\la g, Q(\mu, \mu,\mu) \ra &=& \frac{1}{2} \int_{D}  \mu\doo \mu\dotw \mu\doth K(\o_1,\o_2,\o_3)  \\ \nonumber
&&\qquad \times[ g(\o_1+\o_2-\o_3) + g(\o_3) -g(\o_2) -g(\o_1) ]
\end{eqnarray}
where $D:= \{ \R^3_+ \cap (\o_1 + \o_2 \geq \o_3)\}$ and
\begin{eqnarray} \label{eq:jump_kernel_isotropic_4wave}
\K &=& \frac{8\pi}{\alpha |S^{N-1}|^4}(\om)^{\frac{N-\alpha}{\alpha}}\\
&& \qquad \int_{\lp S^{N-1}\rp^4} d\s_1d\s_2d\s_3d\s\, \overline{T}^2(\o_1^{1/\alpha} \s_1, \o_2^{1/\alpha}\s_2, \o_3^{1/\alpha}\s_3, (\om)^{1/\alpha} \s) \nonumber\\
&& \qquad \qquad \times \delta(\o_1^{1/\alpha}\s_1+ \o_2^{1/\alpha}\s_2- \o_3^{1/\alpha}\s_3- (\om)^{1/\alpha} \s) \nonumber
\end{eqnarray}

\bigskip

Next we explain the formal derivation of the weak isotropic 4-wave kinetic equation \eqref{eq:isotropic_4_wave_equation_weak}. We have that 
\begin{eqnarray*}
\int_{(0,\infty)} \partial_t \mu(\omega) d\o &=& \int_{\R^N} \partial_t n(\kvec) d\kvec\\
&=& 4\pi \int_{\Omega^4 \times S^4} \overline{T}^2(k_1 s_1, k_2 s_2, k_3 s_3, k s)\\
&&\qquad\qquad\times \delta(k_1 s_1+k_2 s_2-k_3s_3-ks) \delta(\omega_1+\o_2-\o_3-\o)\\
&& \qquad\qquad\times (n_1n_2n_3+n_1n_2n-n_1n_3n-n_2n_3n) (k k_1k_2k_3)^{N-1}dkds\\
&=&\frac{4\pi}{\alpha|S^{N-1}|^4} \int_{\R_+^4 \times S^4} d\o_{0123} ds_{0123}T^2(\o_1^{1/\alpha} s_1, \o_2^{1/\alpha} s_2, \o^{1/\alpha}_3 s_3, \o^{1/\alpha} s)\\
&&\qquad\qquad\times \delta(\o^{1/\alpha}_1 s_1+\o^{1/\alpha}_2 s_2-\o^{1/\alpha}_3s_3-\o^{1/\alpha}s) \delta(\omega_1+\o_2-\o_3-\o)\\
&& \qquad\qquad\times (\mu(\o_1)\mu(\o_2)\mu(\o_3)\o^{\frac{N-\alpha}{\alpha}}+\mu(\o_1)\mu(\o_2)\mu(\o) \o_3^{\frac{N-\alpha}{\alpha}}\\
&&\qquad\qquad\quad-\mu(\o_1)\mu(\o_3)\mu(\o)\o_2^{\frac{N-\alpha}{\alpha}}-\mu(\o_2)\mu(\o_3)\mu(\o)\o_1^{\frac{N-\alpha}{\alpha}})  \\
&=&\int_{\R_+^4}  d\o_{0123} F(\o_1,\o_2,\o_3,\o)\delta(\omega_1+\o_2-\o_3-\o)\\
&& \qquad\qquad\times (\mu(\o_1)\mu(\o_2)\mu(\o_3)\o^{\frac{N-\alpha}{\alpha}}+\mu(\o_1)\mu(\o_2)\mu(\o) \o_3^{\frac{N-\alpha}{\alpha}}\\
&& \qquad\qquad\quad-\mu(\o_1)\mu(\o_3)\mu(\o)\o_2^{\frac{N-\alpha}{\alpha}}-\mu(\o_2)\mu(\o_3)\mu(\o)\o_1^{\frac{N-\alpha}{\alpha}})  \\
\end{eqnarray*}
for $S^i=(S^{N-1})^i$, $d\o_{0123}=d\o d\o_1 d\o_2 d\o_3$, $ds_{0123}=ds_1 ds_2 ds_3 ds$, and
\begin{eqnarray*}
F(\o_1, \o_2, \o_3,\o) &=& \frac{4\pi}{\alpha|S^{N-1}|^4}  \int_{S^4} ds_{0123} \overline{T}^2(\o_1^{1/\alpha} s_1, \o_2^{1/\alpha} s_2, \o^{1/\alpha}_3 s_3, \o^{1/\alpha} s)\\
&&\qquad\qquad\times\delta(\o^{1/\alpha}_1 s_1+\o^{1/\alpha}_2 s_2-\o^{1/\alpha}_3s_3-\o^{1/\alpha}s).
\end{eqnarray*}

\medskip
Hence, $\mu_\omega$ satisfies
\begin{eqnarray}\label{eq:isotropic_4_waveKineticEquation}
\partial_t \mu(\o) &=& \int_{\R_+^3}  d\o_{123} F(\o_1,\o_2,\o_3,\o)\delta(\omega_1+\o_2-\o_3-\o)\\
&& \qquad\qquad\times (\mu(\o_1)\mu(\o_2)\mu(\o_3)\o^{\frac{N-\alpha}{\alpha}}+\mu(\o_1)\mu(\o_2)\mu(\o) \o_3^{\frac{N-\alpha}{\alpha}}\\
&& \qquad\qquad-\mu(\o_1)\mu(\o_3)\mu(\o)\o_2^{\frac{N-\alpha}{\alpha}}-\mu(\o_2)\mu(\o_3)\mu(\o)\o_1^{\frac{N-\alpha}{\alpha}}) \nonumber
\end{eqnarray}

Its weak formulation
$$\mu_t =\mu^{in} +\int_{\Omega^3} Q(\mu_s, \mu_s, \mu_s) \, ds$$
is defined against functions $g\in \mathcal{S}(\R_+)$ as
\begin{eqnarray} \nonumber
\langle g, Q(\mu, \mu,\mu) \rangle &=& \int_{\R_+^4} d\o_{0123} \mu(\o_1) \mu(\o_2) \mu(\o_3) \o^{\frac{N-\alpha}{\alpha}} \\ \nonumber
&&\qquad \times[ F_{1230} \delta(\o^{12}_{30}) g(\o) + F_{1203} \delta(\o^{12}_{03})g(\o_3) \\ \nonumber
&& \qquad\quad - F_{1032}\delta(\o^{10}_{32})g(\o_2) -F_{0231}\delta(\o^{02}_{31})g(\o_1) ]\\ \nonumber
&=& \int_{\R_+^4} d\o_{0123} \mu(\o_1) \mu(\o_2) \mu(\o_3) \o^{\frac{N-\alpha}{\alpha}} F_{1230} \delta(\o^{12}_{30})  \\ \nonumber
&&\qquad \times[ g(\o) + g(\o_3) -g(\o_2) -g(\o_1) ]\\ \nonumber
&=& \frac{1}{2}\int_{D} d\o_{123} \mu(\o_1) \mu(\o_2) \mu(\o_3)  K(\o_1,\o_2,\o_3)  \\
&&\qquad \times[ g(\o_1+\o_2-\o_3) + g(\o_3) -g(\o_2) -g(\o_1) ]
\end{eqnarray}
To conclude we assumed that $\overline{T}$ is symmetric in all its variables. We used that changing labels we get that 
\begin{eqnarray*}
&&d\o_{123} F_{1230} \delta(\o^{12}_{30}) g(\o)+ F_{1203} \delta(\o^{12}_{03}) g(\o_3)- F_{1032}\delta(\o^{10}_{32})g(\o_2) -F_{0231}\delta(\o^{02}_{31})g(\o_1)\\ && \qquad=d\o_{123} F_{1230} \delta(\o^{12}_{30}) g(\o)+ F_{1203} \delta(\o^{12}_{03}) g(\o_3)- F_{3012}\delta(\o^{30}_{12})g(\o_2) -F_{0321}\delta(\o^{03}_{21})g(\o_1)
\end{eqnarray*}
 and the properties of the function $F$ to factorise it. We used the notation $\delta(\o^{ij}_{lp})=\delta(\o_i+\o_j-\o_l-\o_p)$ and
$$K(\o_1, \o_2, \o_3):= 2(\o_1 +\o_2-\o_3)^{\frac{N-\alpha}{\alpha}}F(\o_1, \o_2,\o_3, \o_1+\o_2-\o_3).$$
For the last line we used the \textit{sifting property} of the delta distribution i.e.
\begin{equation} \label{eq:sifting_property}
\int^b_a f(t) \delta(t-d) \, dt = \left\{\begin{array}{l l}
f(d) & \mbox{for } d\in[a, b]\\
0 & \mbox{otherwise}
\end{array} \right. .
\end{equation}

\begin{remark}
In reference \cite[Section 3.1.3]{zakharov1992kolmogorov}, the authors state that even in isotropic medium, the interaction coefficient $\overline{T}$ in the 4-wave case cannot be considered to be isotropic too. In the 3-wave case it is possible, but not for the 4-wave. We can rewrite
\begin{equation} \label{eq:def_f2}
|\overline{T}(\kvec_1, \kvec_2, \kvec_3, \kvec)|^2 = \overline{T}_0^2 k^{2\beta} f_2\lp\frac{\kvec_1}{k},\frac{\kvec_2}{k},\frac{\kvec_3}{k}\rp
\end{equation}
for some dimensionless constant $\overline{T}_0$ and some dimensionless function $f_2$. 
\end{remark}

			\section{Conclusions}
	
In this work we have dealt with the weak isotropic 4-wave kinetic equation with simplified kernels. When the kernels are at most linear we have given conditions for the local existence and uniqueness of solutions. We have also derived the equation as a mean-field limit of interacting particle system given by a simultaneous coagulation-fragmentation: three particles interact with a coagulation-fragmentation phenomenon where one of the particles seem to act as a catalyst. 	

As we saw in the introduction, this theory can be applied to physical scenarios that include Langmuir waves, shallow water and waves on elastic plates. Moreover, using the interacting particle system, numerical methods could be devised to simulate the solution of the equation (as done by \cite{connaughton2009numerical} for the 3-wave kinetic equation), by adapting the methods in \cite{eibeck2000approximative}.

Finally, these numerical simulations would allow the study of steady state solutions and to check if they match the Kolmogorov-Zakharov spectra. This will be attempted in a future work.
	
	\bibliographystyle{amsalpha}
\bibliography{biblio}

\end{document}